\documentclass[10pt,a4paper]{article}
\usepackage[a4paper]{geometry}
\usepackage{amssymb,latexsym,amsmath,amsfonts,amsthm}
\usepackage{graphicx,comment}
\usepackage{epsfig}
\usepackage{tikz}
\usepackage[T1]{fontenc}

\newcommand{\Fkr}{\widetilde{F}_{k}^{(\rho)}}
\newcommand{\Fk}{F_{k}^{(\rho)}}

\newcommand{\Fkm}{F_{k-1}^{(\rho)}}

\newcommand{\Fkp}{F_{k+1}^{(\rho)}}

\newtheorem{theorem}{Theorem}[section]
\newtheorem{lemma}[theorem]{Lemma}
\newtheorem{proposition}[theorem]{Proposition}

\newtheorem{corollary}[theorem]{Corollary}

\theoremstyle{definition}
\newtheorem{definition}[theorem]{Definition}

\theoremstyle{remark}

\newtheorem{remark}[theorem]{Remark}

\numberwithin{equation}{section}

\hyphenation{pa-ra-me-tri-za-tion}

\title{Nikishin systems on star-like sets: Ratio asymptotics of the associated multiple orthogonal polynomials, II}

\date{\today}

\author{Abey L\'{o}pez-Garc\'{i}a\footnotemark[1] \quad Guillermo L\'{o}pez Lagomasino\footnotemark[2]}

\begin{document}

\maketitle

\renewcommand{\thefootnote}{\fnsymbol{footnote}}
\footnotetext[1]{Department of Mathematics, University of Central Florida, 4393 Andromeda Loop North, Orlando, FL 32816, USA. email: abey.lopez-garcia\symbol{'100}ucf.edu.} \footnotetext[2]{Departamento de
Matem\'{a}ticas, Universidad Carlos III de Madrid, Avda.~Universidad 30, 28911, Legan\'{e}s, Madrid, Spain. email:
lago\symbol{'100}math.uc3m.es.\\ \indent  The second author received partial support from the Spanish Ministerio de Ciencia, Innovaci\'on y Universidades through grant PGC2018-096504-B-C33.
}

\begin{abstract} In this paper we continue the investigations initiated in \cite{LopLopstar} on ratio asymptotics of multiple orthogonal polynomials and functions of the second kind associated with Nikishin systems on star-like sets. We describe in detail the limiting functions found in \cite{LopLopstar}, expressing them in terms of certain conformal mappings defined on a compact Riemann surface of genus zero. We also express the limiting values of the recurrence coefficients, which are shown to be strictly positive, in terms of certain values of the conformal mappings. As a consequence, the limits depend exclusively on the location of the intervals determined by the supports of the measures that generate the Nikishin system.
\smallskip

\textbf{Keywords:} Multiple orthogonal polynomial, Nikishin system, ratio asymptotics, conformal mapping.

\smallskip

\textbf{MSC 2010:} Primary $42C05$, $30E10$; Secondary $47B39$.

\end{abstract}

\section{Introduction and statement of main results}

Multiple orthogonal polynomials (MOP) and their asymptotic properties have received considerable attention in the last three decades, partly due to their applicability in different fields. The so called Nikishin systems of measures introduced in \cite{Nik} play a central role in many of these studies. Some of the basic questions involve uniqueness of the MOP \cite{FidLop}, convergence of the corresponding Hermite-Pad\'e approximants \cite{BusLop}, $n$-th root \cite{GonRakSor}, ratio \cite{AptLopRocha} (see also \cite{LopLopratio}), and strong \cite{Apt,LopVan} asymptotics of sequences of MOP. We have limited to a short list of significant contributions, see also reference lists in \cite{LopLopstar, LopMin}.

This paper is devoted to the study of the ratio asymptotic behavior of MOP associated with Nikishin systems of measures on star-like sets and of the limit behavior of the coefficients in the recurrence relation they satisfy. It is a  continuation of the investigations in \cite{LopLopstar, LopMin}. We improve the results in \cite{LopLopstar} by giving a detailed expression of the limiting functions that describe the ratio asymptotics and the limiting values of the recurrence coefficients. See also \cite{Lop} for an account of corresponding results in the case $p=2$.

The interest in the study of MOP on star-like sets has its origin in the study of Faber polynomials associated with hypocycloidal domains \cite{EierVarga,HeSaff} and the asymptotic and spectral properties of polynomials generated by high order three-term recurrence relations \cite{AptKalSaff, AptKalIse, DelLop}. Recently, MOP on star-like sets associated with Angelesco systems or classical type weights  have been studied in \cite{LeuVan1, LeuVan2, LouVan}.

Let $p\geq 2$ be an integer, and let
\[
S_{\pm}:=\{z\in\mathbb{C}: z^{p+1}\in\mathbb{R}_{\pm}\},\qquad \mathbb{R}_{+}=[0,+\infty),\quad \mathbb{R}_{-}=(-\infty,0].
\]
We construct $p$ finite stars contained in $S_{\pm}$ as follows:
\begin{align*}
\Gamma_{j} & :=\{z\in\mathbb{C}: z^{p+1}\in[a_{j},b_{j}]\},\qquad \quad 0\leq
j\leq p-1,
\end{align*}
where
\[
0\leq a_{j}<b_{j}<\infty,\quad j\equiv 0\mod 2,
\]
\[
-\infty< a_{j}<b_{j}\leq 0, \quad j\equiv 1\mod 2,
\]
thus $\Gamma_{j}\subset S_{+}$ if $j$ is even, and $\Gamma_{j}\subset S_{-}$
if $j$ is odd. We assume throughout that $\Gamma_{j}\cap
\Gamma_{j+1}=\emptyset$ for all $0\leq j\leq p-2$.

We define now a Nikishin system on $(\Gamma_{0},\ldots,\Gamma_{p-1})$. For each
$0\leq j\leq p-1$,  let $\sigma_{j}$ denote a positive, rotationally invariant
measure on $\Gamma_{j}$, with infinitely many points in its support. These will
be the measures generating the Nikishin system. Let
\[
\widehat{\sigma}_j(z):=\int\frac{d\sigma_j(t)}{z-t}
\]
denote the Cauchy transform of  $\sigma_j$.
We define the measure $\langle \sigma_{0},\ldots,\sigma_{N}\rangle$ by the following
recursive procedure.
For $N=0$, $\langle \sigma_{0}\rangle:= \sigma_{0}$, for $N=1$,
\[
d\langle \sigma_{0},\sigma_{1}\rangle (z):= \widehat{\sigma}_{1}(z)\,d\sigma_{0}(z),
\]
and for $N>1$,
\[
\langle \sigma_{0},\ldots,\sigma_{N}\rangle := \langle \sigma_{0},\langle
\sigma_{1},\ldots,\sigma_{N}\rangle\rangle.
\]

We define the Nikishin system
$(s_{0},\ldots,s_{p-1})=\mathcal{N}(\sigma_0,\ldots,\sigma_{p-1})$ generated by
the vector of
$p$ measures $(\sigma_0,\ldots,\sigma_{p-1})$ by setting
\begin{equation}\label{def:sj}
s_{j}:=\langle \sigma_{0},\ldots, \sigma_{j}\rangle, \qquad 0\leq j\leq p-1.
\end{equation}
Notice that the measures $s_{j}$ are all supported on the first star $\Gamma_{0}$.

\begin{definition}\label{def:MOP}
Let $(Q_{n})_{n=0}^\infty$ be the sequence of monic polynomials of lowest
degree that satisfy the following non-hermitian orthogonality conditions:
\begin{equation}\label{orthog:Qn}
\int_{\Gamma_{0}} Q_{n}(z)\,z^{l}\,d s_{j}(z)=0,\qquad l=0,\ldots,\left\lfloor
\frac{n-j-1}{p}\right\rfloor,\qquad 0\leq j\leq p-1,
\end{equation}
where the measures $s_{j}$ are those in \eqref{def:sj}, and $\lfloor\cdot\rfloor$ denotes the floor function.\end{definition}
In more detail, \eqref{orthog:Qn} asserts that the polynomial $Q_{n}$, where $n= mp+r$, $0\leq r \leq p-1$,
satisfies the orthogonality conditions
\begin{align*}
\int_{\Gamma_{0}} Q_{mp+r}(z)\,z^{l}\,ds_{j}(z) & =0,\qquad l=0,\ldots,m-1,\quad
0\leq j\leq p-1,\\
\int_{\Gamma_{0}} Q_{mp+r}(z)\,z^{m}\,ds_{j}(z) & =0,\qquad 0\leq j\leq
r-1.
\end{align*}

Some algebraic properties of these polynomials were proved in \cite[Propositions 2.16, 3.1, Theorem 3.5, Corollary 3.6]{LopMin}. For our purpose, the most significant are:

\begin{itemize}
\item[1)] For each $n\geq 0$, the polynomial $Q_{n}$ has maximal degree $n$.
\item[2)] If $n\equiv \ell \mod (p+1)$, $0\leq \ell\leq p$, then there exists
a monic polynomial $\mathcal{Q}_{d}$ of degree $d=\frac{n-\ell}{p+1}$ such that
\begin{equation}\label{eq:decompQn}
Q_{n}(z)=z^{\ell} \mathcal{Q}_{d}(z^{p+1}),
\end{equation}
where the zeros of $\mathcal{Q}_{d}$ are all simple and located in $(a_{0},b_{0})$. In particular,
the zeros of $Q_{n}$ are located in the star-like set $S_{+}$.
\item[3)] The polynomials $Q_{n}$ satisfy the following three-term recurrence relation of order $p+1$:
\begin{equation}\label{threetermrec}
z\,Q_{n}(z)=Q_{n+1}(z)+a_{n}\,Q_{n-p}(z),\qquad n\geq p,\qquad
a_{n}>0,
\end{equation}
where
\[
Q_{\ell}(z)=z^{\ell},\qquad \ell=0,\ldots,p.
\]
(Here, there is an abuse of notation since above we denoted by $a_0,\ldots,a_{p-1}$ the left end points of some intervals on the real line. From the context, we are sure this will cause no confusion in the text.)
\item[4)] For every $n\geq p+1$, the non-zero roots of the polynomials $Q_{n}$ and $Q_{n+1}$ interlace on $\Gamma_{0}$.
\end{itemize}

Recurrences of the form \eqref{threetermrec}, with $a_n>0$ for all $n$, were studied in \cite{AptKalIse,AptKalSaff,Ben,DelLop}. In \cite{AptKalIse,AptKalSaff,DelLop}, Favard type theorems were obtained showing that the generated polynomials satisfy multiple orthogonality relations with respect to measures with common support on a star-like set.

\begin{definition}\label{definitionPsi}
The functions of the second kind are defined as follows. Set $\Psi_{n,0}:=Q_{n}$, and let
\[
\Psi_{n,k}(z):=\int_{\Gamma_{k-1}}\frac{\Psi_{n,k-1}(t)}{z-t}\,d\sigma_{k-1}(t),
\qquad k=1,\ldots,p.
\]
\end{definition}

These functions satisfy the same three-term recurrence relation \eqref{threetermrec} and, therefore, they also play a central role in the asymptotic analysis.

In \cite[Theorem 5.3, Corollary 5.4]{LopMin}, under appropriate assumptions on the generating measures, the asymptotic zero distribution and n-th root asymptotics of the sequences $(Q_n)_{n=0}^\infty$ and $(\Psi_{n,k})_{n=0}^{\infty}$, $k=1,\ldots,p$, were given in terms of the solution of a vector equilibrium problem for the logarithmic potential.

In \cite{LopLopstar}, the goal was to obtain an extension of Rakhmanov's theorem on ratio asymptotics of orthogonal polynomials \cite{Rak1, Rak2} (for simplifications and alternative proofs of this classical result, see also \cite{MNT1,MNT2,Nev1,Nev2,Rak3}) similar to the one given in \cite{AptLopRocha} for Nikishin systems on the real line. Indeed, it was shown, see \cite[Corollary 3.3]{LopLopstar}, that $\sigma_j' > 0$  a.e. on $\Gamma_j$, $j=0,\ldots,p-1,$ implies that for each $\rho$, $0\leq \rho \leq p(p+1) - 1,$ there exists
\[ \lim_{\lambda\to \infty} \frac{Q_{\lambda p(p+1) + \rho +1}}{Q_{\lambda p(p+1) + \rho}}
\]
uniformly on compact subsets of $\mathbb{C} \setminus (\Gamma_0 \cup \{0\})$. Somewhat surprisingly, the limits exist over a period $p(p+1)$. In \cite{AptLopRocha}, for Nikishin systems on the real line generated by $p$ measures, it was shown that ratio asymptotics holds with period $p$ and the limiting functions were described in terms of  certain algebraic functions defined on a Riemann surface of genus zero with $p+1$ sheets; as shown in \cite[Corollary 1.3]{AKLR}, this implies the $p$ periodic limits of the coefficients in the $(p+2)$-term recurrence relation satisfied by the Nikishin multiple orthogonal polynomials (see also \cite[Theorem 1.1]{DelLopLop}, where algebraic relations between these limits are described). An analogous question in the star setting remained unanswered in \cite{LopLopstar}. In \cite[Corollary 3.3]{LopLopstar} it was also proved the existence of
\begin{equation}\label{eq:limarho}
\lim_{\lambda\to \infty} a_{\lambda p(p+1)+\rho}=a^{(\rho)}.
\end{equation}
Here, we show that these limits are non-zero, we give different expressions for them, and describe some linear relations that they satisfy.

The asymptotic formulae that we obtain in this work can all be expressed in terms of certain conformal mappings defined on a compact Riemann surface of genus zero. These formulae show that under the general assumptions on the generating measures of the Nikishin system considered in our previous work \cite{LopLopstar}, the ratio asymptotic quantities obtained only depend, as expected, on the underlying Riemann surface whose structure is determined by the supports of the generating measures.

Before we state our main results, we define the Riemann surface and conformal mappings we will work with.

Throughout the rest of the paper, we will occasionally write $\Delta_{k}:=[a_{k},b_{k}]$, $0\leq k\leq p-1$. Let $\mathcal{R}$ denote the compact Riemann surface
\[
\mathcal{R}=\overline{\bigcup_{k=0}^{p}\mathcal{R}_{k}}
\]
formed by the $p+1$ consecutively "glued" sheets
\[
\mathcal{R}_{0}:=\overline{\mathbb{C}}\setminus\Delta_{0},\qquad \mathcal{R}_{k}:=\overline{\mathbb{C}}\setminus(\Delta_{k-1}\cup\Delta_{k}),\quad k=1,\ldots,p-1,\qquad \mathcal{R}_{p}:=\overline{\mathbb{C}}\setminus\Delta_{p-1},
\]
where the upper and lower banks of the slits of two neighboring sheets are identified. This surface is of genus zero. For this and other notions of Riemann surfaces as well as meromorphic functions defined on them we recommend \cite{Mir}.

Let $\pi: \mathcal{R} \longrightarrow \overline{\mathbb{C}}$ be the canonical projection from $\mathcal{R}$ to $\overline{\mathbb{C}}$ and denote by $z^{(k)}$ the point on  $\mathcal{R}_k$ satisfying $\pi(z^{(k)}) = z$, $z \in \overline{\mathbb{C}}$. For a fixed $l\in\{1,\ldots,p\}$, let $\varphi^{(l)}:\mathcal{R}\longrightarrow\overline{\mathbb{C}}$ denote a conformal mapping whose divisor consists of one simple zero at the point $\infty^{(0)}\in\mathcal{R}_{0}$ and one simple pole at the point $\infty^{(l)}\in\mathcal{R}_{l}$. This mapping exists and is uniquely determined up to a multiplicative constant. Denote the branches of $\varphi^{(l)}$ by
\[ \varphi_k^{(l)}(z) := \varphi^{(l)}(z^{(k)}), \qquad k= 0,\ldots,p, \qquad z^{(k)} \in \mathcal{R}_{k}.\]
From the properties of $\varphi^{(l)}$, we have
\begin{equation}\label{divisorcond}
\varphi_0^{(l)}(z)=C_{1,l}/z+O(1/z^{2}),\,\,\,z\rightarrow\infty,\qquad \varphi_l^{(l)}(z)=C_{2,l}\,z+O(1),\,\,\,z\rightarrow\infty,
\end{equation}
where $C_{1,l}$, $C_{2,l}$ are non-zero constants.

It is well known and easy to verify that the function $\prod_{k=0}^{p}\varphi_{k}^{(l)}$ admits an analytic continuation to the whole extended plane $\overline{\mathbb{C}}$ without singularities; therefore, it is constant. Multiplying $\varphi^{(l)}$ if necessary by a suitable non-zero constant, we may assume that $\varphi^{(l)}$ satisfies the conditions
\[
\prod_{k=0}^{p}\varphi_{k}^{(l)} = C, \qquad |C| = 1, \qquad C_{1,l} > 0.
\]
Let us show that with this normalization, $C$ is either $+1$ or $-1$.

Indeed, for a point $z^{(k)} \in \mathcal{R}_k$ on the Riemann surface we define its conjugate $\overline{z^{(k)}} := \overline{z}^{(k)}$. Now, let $\overline{\varphi}^{(l)}: \mathcal{R} \longrightarrow \overline{\mathbb{C}}$ be the function defined by $\overline{\varphi}^{(l)}(\zeta):= \overline{\varphi^{(l)}(\overline{\zeta})}$. It is easy to verify that $\overline{\varphi}^{(l)}$ is a conformal mapping of $\mathcal{R}$ onto $\overline{\mathbb{C}}$ with the same divisor as $\varphi^{(l)}$. Therefore, there exists a constant $c$ such that $\overline{\varphi}^{(l)} = c \varphi^{(l)}$. The corresponding branches satisfy the relations
\[ \overline{\varphi}_k^{(l)}(z) = \overline{\varphi_k^{(l)}(\overline{z})} = c {\varphi}_k^{(l)}(z), \qquad k=0,\ldots,p.
\]
Comparing the Laurent expansions at $\infty$ of $\overline{\varphi_0^{(l)}(\overline{z})}$ and $c {\varphi}_0^{(l)}(z)$, using the fact that $C_{1,l} >0$, it follows that $c = 1$. Then
\[   {\varphi}_k^{(l)}(z) = \overline{\varphi_k^{(l)}(\overline{z})}, \qquad k=0,\ldots,p.
\]
This in turn implies that for each $k=0,\ldots, p,$ all the coefficients, in particular the leading one, of the Laurent expansion at infinity of $ {\varphi}_k^{(l)}$ are real numbers. Obviously, $C$ is the product of these leading coefficients. Therefore, $C$ is real, and $|C|=1$ implies that $C$ equals $1$ or $-1$ as claimed. So, we can assume in the following that
\begin{equation}\label{normconfmap}
\prod_{k=0}^{p}\varphi_{k}^{(l)}\equiv \pm 1,\qquad C_{1,l}>0.
\end{equation}
It is easy to see that conditions \eqref{divisorcond} and \eqref{normconfmap} determine $\varphi^{(l)}$ uniquely. In this paper, we will use the notation
\begin{equation}\label{def:omegal}
\omega_{l}:=C_{1,l}=\lim_{z\rightarrow\infty} z \varphi_{0}^{(l)}(z)
\end{equation}
for the constant $C_{1,l}$ in \eqref{divisorcond}.

We can now state the main results of this paper.

\begin{theorem}\label{theo:main:1}
Assume that for each $k=0,\ldots,p-1$, the measure $\sigma_{k}$  has positive Radon-Nikodym derivative with respect to linear Lebesgue measure a.e. on $\Gamma_k$. The following formulas hold, uniformly on compact subsets of the indicated regions:
\begin{itemize}
\item[1)] For each fixed $0\leq \rho\leq p(p+1)-1$,
\begin{equation}\label{eq:ratioasympQ}
\lim_{\lambda\rightarrow\infty}\frac{Q_{\lambda p (p+1)+\rho+1}(z)}{Q_{\lambda p (p+1)+\rho}(z)}=\frac{z}{1+a^{(\rho)}\,\omega_{l}^{-1}\,\varphi_{0}^{(l)}(z^{p+1})},\qquad z\in\mathbb{C}\setminus(\Gamma_{0}\cup\{0\}),
\end{equation}
where $l=l(\rho)$ is the integer satisfying the conditions $1\leq l\leq p$ and $(l-1)\equiv \rho \mod p$, and $\omega_{l}$ is defined in \eqref{def:omegal}. Convergence takes place in $\mathbb{C}\setminus\Gamma_{0}$ if $\rho\not\equiv p \mod (p+1)$.

\item[2)] For each fixed $0\leq \rho\leq p(p+1)-1$ and $1\leq k\leq p$,
\begin{equation}\label{eq:ratioasympPsink}
\lim_{\lambda\rightarrow\infty}\frac{\Psi_{\lambda p(p+1)+\rho+1,k}(z)}{\Psi_{\lambda p(p+1)+\rho,k}(z)}=\frac{z}{1+a^{(\rho)}\,\omega_{l}^{-1}\,\varphi_{k}^{(l)}(z^{p+1})},\qquad z\in\mathbb{C}\setminus(\Gamma_{k-1}\cup\Gamma_{k}\cup\{0\}),
\end{equation}
with $\omega_{l}$ and $l=l(\rho)$ as in 1), and $\Gamma_{p}=\emptyset$.
\end{itemize}
\end{theorem}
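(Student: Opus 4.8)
\emph{Sketch of the approach.}
The plan is to identify the limiting functions, whose existence is guaranteed by \cite[Corollary 3.3]{LopLopstar}, with the branches of a single meromorphic function on the genus-zero surface $\mathcal{R}$, and then to use that such a function is determined by its divisor together with one normalizing value. Fix $\rho$, put $l=l(\rho)$, and for $\lambda\to\infty$ write $n=n(\lambda)=\lambda p(p+1)+\rho$. By \cite{LopLopstar} the limits
\[
\Phi_k^{(\rho)}(z):=\lim_{\lambda\to\infty}\frac{\Psi_{n+1,k}(z)}{\Psi_{n,k}(z)},\qquad k=0,\dots,p\quad (\Psi_{n,0}=Q_n),
\]
exist locally uniformly on the indicated regions, and the limits $a^{(\rho)}$ of \eqref{eq:limarho} exist. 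First I would record the structural facts these functions inherit. By the factorization \eqref{eq:decompQn} and its analogue for the $\Psi_{n,k}$ (from \cite{LopMin}), $z\mapsto\Phi_k^{(\rho)}(z)/z$ depends on $z$ only through $w=z^{p+1}$, defining functions $g_k^{(\rho)}(w)$. From \eqref{eq:decompQn}, $Q_n(z)=z^n+c_nz^{n-(p+1)}+\cdots$, and matching the coefficients of $z^{n-p}$ in \eqref{threetermrec} gives $c_n-c_{n+1}=a_n$; hence $Q_{n+1}(z)/Q_n(z)=z-a_nz^{-p}+O(z^{-p-1})$ and therefore $g_0^{(\rho)}(w)=1-a^{(\rho)}w^{-1}+O(w^{-2})$ as $w\to\infty$. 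Finally, the defining relations $\Psi_{n,k}(z)=\int\Psi_{n,k-1}(t)(z-t)^{-1}d\sigma_{k-1}(t)$, the Sokhotski--Plemelj formula, and the rotational invariance of $\sigma_{k-1}$ (which turns $\Gamma_{k-1}$ into $\Delta_{k-1}$ under $z\mapsto z^{p+1}$) yield multiplicative boundary relations for the $g_k^{(\rho)}$ along $\Delta_{k-1}$ and $\Delta_k$; these are exactly the statement that $g_0^{(\rho)},\dots,g_p^{(\rho)}$ are the branches on $\mathcal{R}_0,\dots,\mathcal{R}_p$ of one and the same function $g^{(\rho)}$ on $\mathcal{R}$.

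Next I would determine the divisor of $g^{(\rho)}$. From the analysis in \cite{LopLopstar} one extracts that $g^{(\rho)}$ has no zeros or poles on $\mathcal{R}$ other than a simple zero at $\infty^{(l)}$ and a simple pole at one further point $\zeta^{(\rho)}$ lying over the origin, with $\zeta^{(\rho)}=0^{(0)}\in\mathcal{R}_0$ precisely when $\rho\equiv p\bmod(p+1)$; this is what makes $\Phi_0^{(\rho)}(z)=z\,g_0^{(\rho)}(z^{p+1})$ acquire a pole at $z=0$ in that case, and only then. Since $\mathcal{R}$ has genus zero, the conformal map $\varphi^{(l)}$ is a biholomorphism of $\mathcal{R}$ onto $\overline{\mathbb{C}}$, so every meromorphic function on $\mathcal{R}$ is a rational function of $\varphi^{(l)}$; having one simple zero and one simple pole, $g^{(\rho)}$ is a linear-fractional function of $\varphi^{(l)}$, and requiring its zero to sit at $\infty^{(l)}=(\varphi^{(l)})^{-1}(\infty)$ and its value at $\infty^{(0)}$ to be $1$ forces $g^{(\rho)}=(1+\beta\,\varphi^{(l)})^{-1}$ for a constant $\beta$. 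Comparing with the Laurent expansion above and using $\varphi_0^{(l)}(w)=\omega_l/w+O(w^{-2})$ from \eqref{divisorcond} and \eqref{def:omegal}, we get $g_0^{(\rho)}(w)=1-\beta\omega_lw^{-1}+O(w^{-2})$, whence $\beta=a^{(\rho)}\omega_l^{-1}$. Thus $g_k^{(\rho)}(w)=(1+a^{(\rho)}\omega_l^{-1}\varphi_k^{(l)}(w))^{-1}$ for all $k$, which is \eqref{eq:ratioasympQ} for $k=0$ and \eqref{eq:ratioasympPsink} for $1\le k\le p$; the regions of uniform convergence are the domains of analyticity of the right-hand sides, $0$ being excluded exactly when $\zeta^{(\rho)}$ projects onto it, which for $k=0$ occurs only when $\rho\equiv p\bmod(p+1)$.

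The heart of the matter, and the main obstacle, is the determination of the divisor of $g^{(\rho)}$: that it has degree one, with its zero at $\infty^{(l)}$ and its only pole over the origin. This cannot be read off naively from the behaviour of the individual ratios at $0$ and $\infty$, since the limit in $\lambda$ and the limit $z\to\infty$ do not commute --- for each fixed $n$, $\Psi_{n+1,l}/\Psi_{n,l}$ decays at infinity at most like $z^{-1}$, whereas $\Phi_l^{(\rho)}$ decays like $z^{-p}$ --- and one has to rely on the boundary value problem of \cite{LopLopstar}, in which the exponents of $g^{(\rho)}$ at the points over $0$ and $\infty$ are controlled through the floor functions in the orthogonality conditions \eqref{orthog:Qn} and the factorization \eqref{eq:decompQn}; these produce the dependence of the answer on $\rho$ modulo $p+1$ (through the pole over $0$) and modulo $p$ (through the index $l=l(\rho)$), while the interlacing property 4) excludes spurious zeros and poles away from the special points. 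One also uses the strict positivity of the $a^{(\rho)}$, proved elsewhere in the paper, so that the value $-\omega_l/a^{(\rho)}$ assumed by $\varphi^{(l)}$ at $\zeta^{(\rho)}$ is a negative real number, consistent with $\zeta^{(\rho)}$ lying over the origin on the appropriate sheet and with the denominator $1+a^{(\rho)}\omega_l^{-1}\varphi_k^{(l)}(z^{p+1})$ not vanishing on the asserted regions. Once the divisor is in hand, the uniqueness of a meromorphic function on a compact genus-zero surface with a prescribed divisor and one normalization finishes the proof simultaneously for all $k=0,\dots,p$.
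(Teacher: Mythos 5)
Your final picture is the correct one --- the limit functions are the branches of the conformal map $\eta^{(\rho)}$ of \eqref{def:etarhoconf}, pinned down by a simple zero at $\infty^{(l(\rho))}$, a simple pole at a point over the origin, and the value $1$ at $\infty^{(0)}$ --- and your Laurent computation at infinity identifying the constant $\beta=a^{(\rho)}\omega_l^{-1}$ is fine and consistent with \eqref{eq:LaurentFexp}. But the two steps that carry all the weight are asserted rather than proved. The first is the gluing claim: you say that Sokhotski--Plemelj applied to $\Psi_{n,k}(z)=\int\Psi_{n,k-1}(t)(z-t)^{-1}d\sigma_{k-1}(t)$, together with rotational invariance, shows that $g_0^{(\rho)},\dots,g_p^{(\rho)}$ are branches of a single meromorphic function on $\mathcal{R}$. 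Under the hypothesis $\sigma_k'>0$ a.e.\ this cannot be obtained that way: the jump relations for the second-kind functions are additive and involve the boundary values themselves, and nothing guarantees pointwise convergence of boundary values of the ratios on the cuts. What does survive the limit $\lambda\to\infty$ are only multiplicative relations between \emph{absolute values} of boundary values --- the system recorded in Proposition \ref{prop:boundary}, taken from \cite{LopLopstar} --- and the analytic continuation of one branch into the next across $\Delta_k$ is a conclusion available only after the solution of that modulus boundary value problem has been identified with algebraic functions; it is not an input one can read off from Plemelj.

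The second, which you yourself call the heart of the matter, is the divisor determination, and here you defer to ``the analysis in \cite{LopLopstar}''. That reference, however, only supplies the existence of the limits $\widetilde F_k^{(\rho)}$ and the modulus boundary value problem; it expresses each $\widetilde F_k^{(\rho)}$ as a Szeg\H{o} function possibly multiplied or divided by a conformal map of $\overline{\mathbb{C}}\setminus\Delta_k$, which is not a priori a branch of anything global on $\mathcal{R}$. Extracting the divisor --- one zero at $\infty^{(l(\rho))}$ with $l(\rho)$ governed by $\rho\bmod p$, one pole at $0^{(k(\rho))}$ with $k(\rho)$ governed by $\rho\bmod(p+1)$ --- is precisely the new content of the present paper and requires: passing to the products $f_k^{(\rho)}=\prod_{j=0}^{p}F_k^{(\rho+j)}$ of \eqref{def:fkrho} over a full period, where the modulus problem becomes the one solved uniquely by products of branches $\varphi_\nu^{(l)}$ via \cite[Lemma 4.2]{AptLopRocha} (this is where the counting function $\Lambda(\rho,k)$ produces the dependence on $\rho\bmod p$); descending from these products to the individual quotients $F_k^{(\rho)}/F_{k-1}^{(\rho)}$ by means of the recurrences \eqref{threetermrecsecondkindpequena1}--\eqref{threetermrecsecondkindpequena2} and the sign and normalization constants $\varepsilon_k^{(\rho)}$, $c_k^{(\rho)}$ (Theorem \ref{lem:quotFkr} and the Appendix); and the case analysis at the origin in the proof of part 4) of Theorem \ref{theo:main:2}, which is what actually places the pole over $0$ on the sheet $k(\rho)$ and yields \eqref{eq:descrip:arho}. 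None of this appears in your argument, so at its central point the proposal assumes exactly what has to be proved; the concluding genus-zero uniqueness step is sound, but only once the gluing, the divisor, and the normalization have genuinely been established.
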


The following result concerns properties of the limiting values $a^{(\rho)}$ in \eqref{eq:limarho}. In the statement of the result and throughout the rest of the paper, we understand that the values $(a^{(\rho)})_{\rho=0}^{p(p+1)-1}$ are continued periodically in $\mathbb{Z}$ with period $p(p+1)$, so that $a^{(\rho)}=a^{(\rho+p(p+1))}$ for all $\rho\in\mathbb{Z}$.

\begin{theorem}\label{theo:main:2}
Assume that for each $k=0,\ldots,p-1$, the measure $\sigma_{k}$  has positive Radon-Nikodym derivative with respect to linear Lebesgue measure a.e. on $\Gamma_k$.
The following properties stated in 1)--4) below hold for each $0\leq \rho\leq p(p+1)-1$:
\begin{itemize}
\item[1)] The limit in \eqref{eq:limarho} is non-zero, i.e., $a^{(\rho)}>0$.
\item[2)] The set of $p$ values $\{a^{(\rho+m(p+1))}\}_{m=0}^{p-1}$ is formed by distinct quantities.
\item[3)] The following relation holds:
\[
\sum_{i=\rho}^{\rho+p-1} a^{(i)}=\sum_{i=\rho+p+1}^{\rho+2p} a^{(i)}.
\]
\item[4)] We have
\begin{equation}\label{eq:descrip:arho}
a^{(\rho)}=-\frac{\omega_{l}}{\varphi_{k}^{(l)}(0)}
\end{equation}
where $(k,l)=(k(\rho),l(\rho))$ is the unique pair of integers satisfying the conditions $0\leq k\leq p$, $\rho\equiv (k-1) \mod (p+1)$, and $1\leq l\leq p$, $\rho\equiv (l-1) \mod p$, and $\omega_{l}$ is the positive constant defined in \eqref{def:omegal}.
\item[5)] Assume that $0\in\Delta_{k}$ for some $0\leq k\leq p-1$. Then, for any $0\leq \rho\leq p(p+1)-1$ such that $\rho\equiv (k-1) \mod (p+1)$, we have $a^{(\rho-p)}=a^{(\rho)}$. If $0\notin\Delta_{k}$ for all $0\leq k\leq p-1,$ then for any $0\leq \rho\leq p(p+1)-1$, the set of $p+1$ values $\{a^{(\rho+mp)}\}_{m=0}^{p}$ is formed by distinct quantities.
\end{itemize}
\end{theorem}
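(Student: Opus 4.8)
\emph{Strategy.} Part 4) is the keystone: granting \eqref{eq:descrip:arho}, parts 1)--3) and 5) follow from elementary facts about $\mathcal R$ and from the limiting recurrence. So I would prove 4) first. Fix $\rho$, set $(k,l)=(k(\rho),l(\rho))$, $n_\lambda=\lambda p(p+1)+\rho$, and invoke Theorem~\ref{theo:main:1}(2):
\[
G^{(\rho)}(z):=\lim_{\lambda}\frac{\Psi_{n_\lambda+1,k}(z)}{\Psi_{n_\lambda,k}(z)}=\frac{z}{1+a^{(\rho)}\omega_l^{-1}\varphi_k^{(l)}(z^{p+1})},
\]
uniformly on compacts of $\mathbb C\setminus(\Gamma_{k-1}\cup\Gamma_k\cup\{0\})$. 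The plan is to read off $\varphi_k^{(l)}(0)$ from the local behaviour of both sides at $z=0$. Rotational invariance of each $\sigma_j$ and $\Gamma_j$ under $z\mapsto e^{2\pi i/(p+1)}z$ gives, by induction on Definition~\ref{definitionPsi}, the identity $\Psi_{n,k}(e^{2\pi i/(p+1)}z)=e^{2\pi i\,w_k(n)/(p+1)}\Psi_{n,k}(z)$ with $w_k(n)\equiv (n\bmod(p+1))-k\pmod{p+1}$; hence $z^{-w_k(n)}\Psi_{n,k}$ is a function of $z^{p+1}$, so $\Psi_{n,k}$ vanishes at $0$ to order $\equiv w_k(n)\pmod{p+1}$. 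The congruence $\rho\equiv k-1\pmod{p+1}$ forces $w_k(n_\lambda)\equiv p$ and $w_k(n_\lambda+1)\equiv0$; invoking the localization of the zeros of $\Psi_{n,k}$ on $\Gamma_k$ from \cite{LopMin} to rule out spurious zeros near $0$, one gets that $\Psi_{n_\lambda,k}$ has a zero of order exactly $p$ at $0$ while $\Psi_{n_\lambda+1,k}$ does not vanish at $0$, so $z^p\Psi_{n_\lambda+1,k}/\Psi_{n_\lambda,k}$ is analytic near $0$ with a nonzero value $b_\lambda$ there. Passing to the limit on a small circle about $0$ and using the maximum principle, $z^pG^{(\rho)}$ extends analytically across $0$ with value $\lim_\lambda b_\lambda\ne0$; thus $G^{(\rho)}$ has a pole at $0$, which forces $1+a^{(\rho)}\omega_l^{-1}\varphi_k^{(l)}(0)=0$, i.e.\ \eqref{eq:descrip:arho}. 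When $0\in\Delta_k$ the point $0^{(k)}$ is a branch point and $\varphi^{(l)}$ is holomorphic there, so \eqref{eq:descrip:arho} follows by letting $z\to0$ inside the region of uniform convergence. Part 1) is then immediate: in \eqref{eq:descrip:arho} one has $\omega_l>0$, while $\varphi_k^{(l)}(0)$ is finite, nonzero (the sole zero and pole of $\varphi^{(l)}$ lie over $\infty$) and real (by the symmetry $\varphi_k^{(l)}(z)=\overline{\varphi_k^{(l)}(\bar z)}$ recorded in the Introduction); since $a^{(\rho)}=\lim a_{n_\lambda}\ge0$ this forces $\varphi_k^{(l)}(0)<0$ and hence $a^{(\rho)}>0$.

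\emph{Parts 2) and 5).} These are now purely function-theoretic on the genus-zero surface $\mathcal R$. For 5): if $0\in\Delta_k$ then $0$ is an endpoint of the slit $\Delta_k$ joining $\mathcal R_k$ and $\mathcal R_{k+1}$, so the branches agree at the common branch point, $\varphi_k^{(l)}(0)=\varphi_{k+1}^{(l)}(0)$; since $\rho$ and $\rho-p$ share the same $l$ and satisfy $k(\rho-p)=k(\rho)+1$, formula \eqref{eq:descrip:arho} gives $a^{(\rho-p)}=a^{(\rho)}$. If $0\notin\bigcup_k\Delta_k$, then $0$ is a regular value of $\pi$, the $p+1$ points over $0$ are distinct, and, $\varphi^{(l)}$ being injective, the numbers $\varphi_0^{(l)}(0),\dots,\varphi_p^{(l)}(0)$ are pairwise distinct; as $m$ ranges over $0,\dots,p$ the indices $\rho+mp$ keep $l$ fixed and run through all $k\in\{0,\dots,p\}$, so \eqref{eq:descrip:arho} yields the $p+1$ distinct values claimed in 5). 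For 2): the indices $\rho+m(p+1)$, $m=0,\dots,p-1$, all share $k=k(\rho)$ and run through all $l\in\{1,\dots,p\}$. If two of them produced the same value $\alpha$ with $l_1\ne l_2$, consider $g:=\omega_{l_1}^{-1}\varphi^{(l_1)}-\omega_{l_2}^{-1}\varphi^{(l_2)}$, meromorphic on $\mathcal R$; it has exactly two simple poles, at $\infty^{(l_1)}$ and $\infty^{(l_2)}$, but at least three zeros: a double zero at $\infty^{(0)}$ because $\omega_l^{-1}\varphi_0^{(l)}(z)=1/z+O(z^{-2})$ by \eqref{divisorcond}--\eqref{def:omegal}, plus a zero at $0^{(k)}$ since by \eqref{eq:descrip:arho} both $\omega_{l_i}^{-1}\varphi_k^{(l_i)}(0)$ equal $-1/\alpha$. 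A meromorphic function on a compact Riemann surface has equally many zeros and poles, a contradiction; hence the $p$ values are distinct.

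\emph{Part 3).} Dividing \eqref{threetermrec} by $Q_n$ and letting $n=n_\lambda\to\infty$ gives, on $\mathbb C\setminus(\Gamma_0\cup\{0\})$, the limiting relation
\[
z=F^{(\rho)}(z)+a^{(\rho)}\prod_{i=1}^{p}\frac1{F^{(\rho-i)}(z)},
\]
where $F^{(\sigma)}$ is the limit \eqref{eq:ratioasympQ} and indices are read mod $p(p+1)$. Writing $\varphi_0^{(l)}(w)=\omega_l/w+\beta_l/w^2+O(w^{-3})$ and expanding the right-hand side in powers of $1/z$ at $\infty$, the $z^1$- and $z^{-p}$-coefficients match automatically, and vanishing of the $z^{-2p-1}$-coefficient gives, after dividing by $a^{(\rho)}\ne0$ (part 1), the identity $\sum_{i=0}^{p}a^{(\rho-i)}=\beta_{l(\rho)}/\omega_{l(\rho)}$. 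The right-hand side depends on $\rho$ only through $l(\rho)$, hence only through $\rho\bmod p$; therefore replacing $\rho$ by $\rho+p$ leaves the left-hand side unchanged, and cancelling the common term $a^{(\rho)}$ and relabelling $\rho\mapsto\rho+p$ yields exactly $\sum_{i=\rho}^{\rho+p-1}a^{(i)}=\sum_{i=\rho+p+1}^{\rho+2p}a^{(i)}$. The only step that is not soft is the claim in Part 4) that $G^{(\rho)}$ truly has a pole at $0$, i.e.\ that $b_\lambda\not\to0$; this rests on the quantitative zero-counting and normalization of the functions of the second kind from \cite{LopMin}, and the borderline case $0\in\Delta_k$ needs a slightly more careful limiting argument. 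Everything else is rotational-symmetry bookkeeping, standard genus-zero function theory on $\mathcal R$, and an expansion of the limiting recurrence.
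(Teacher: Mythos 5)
Your plan makes part 4) the keystone, but the one step everything hinges on is exactly the step you do not prove: that the limit $G^{(\rho)}$ of $\Psi_{n_\lambda+1,k}/\Psi_{n_\lambda,k}$ genuinely has a pole at $z=0$. The rotational-symmetry bookkeeping only determines the order of vanishing of $\Psi_{n,k}$ at the origin \emph{modulo} $p+1$, and the zero-localization results you invoke from \cite{LopMin} (quoted in Section 2.3 of the paper) count the zeros of $\psi_{n,k}$ in $\mathbb{C}\setminus([a_{k-1},b_{k-1}]\cup\{0\})$ --- the origin is explicitly excluded --- so they give neither that $\Psi_{n_\lambda,k}$ vanishes at $0$ to order exactly $p$ (rather than $p+(p+1)$, say), nor that $\Psi_{n_\lambda+1,k}(0)\neq 0$. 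Even granting exact orders for every $\lambda$, your maximum-principle argument only shows that $z^{p}G^{(\rho)}$ extends analytically across $0$ with value $\lim_\lambda b_\lambda$; nothing you cite prevents $b_\lambda\to 0$, in which case $G^{(\rho)}$ is analytic at $0$ and no conclusion about $\varphi_k^{(l)}(0)$ follows. You flag this yourself as ``the only step that is not soft,'' but it is the entire content of 4); the boundary cases $0\in\Delta_k$ and $0\in\Delta_{k-1}$ (where $\Psi_{n,k}$ need not even be analytic at $0$) are likewise left open. Since in your scheme 1) is deduced from 4), and your proof of 3) divides by $a^{(\rho)}$, the gap propagates to those parts as well. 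The paper avoids the whole issue by working with the limit functions rather than the individual $\Psi_{n,k}$: relation \eqref{eq:ratioFalg} with $\xi_k^{(\rho)}(z)=z$, combined with the non-vanishing of the $F_j^{(\rho)}$ off their slits and the boundary-value equations \eqref{eq:bv:lnp:2}--\eqref{eq:bv:lnp:3}, \eqref{eq:bv:lp:1} to treat $0\in\Delta_{k-1}\cup\Delta_k$, forces $1+a^{(\rho)}\omega_l^{-1}\varphi_k^{(l)}(0)=0$ directly; moreover it proves 1) beforehand (by contradiction from the boundary-value problem) and 3) independently of 1) and 4), via the periodicity $f_k^{(\rho)}\equiv f_k^{(\rho+p)}$ and the Laurent expansion at infinity.

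Granted 4), the remainder of your proposal is sound, and two pieces are genuinely nice alternatives: your proof of 2) by counting zeros and poles of $\omega_{l_1}^{-1}\varphi^{(l_1)}-\omega_{l_2}^{-1}\varphi^{(l_2)}$ (a double zero at $\infty^{(0)}$, an extra zero at $0^{(k)}$, only two poles) is cleaner than the paper's comparison of products of the $\widetilde{F}_0^{(i)}$, and your derivation of 3) from the limiting form of the recurrence \eqref{threetermrec} is correct once 1) and Theorem~\ref{theo:main:1} are in hand; your 5) coincides with the paper's argument. But as written the proposal contains no proof of 4), and therefore none of 1) or 3) either; to close it you would either need a quantitative lower bound on the coefficients $b_\lambda$ (not available from the cited results) or, better, the paper's route through the non-vanishing limit functions $F_k^{(\rho)}$.
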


Observe that the function $\eta^{(\rho)}:\mathcal{R}\longrightarrow \overline{\mathbb{C}}$ defined by
\begin{equation}\label{def:etarhoconf}
\eta^{(\rho)} :=\frac{1}{1+a^{(\rho)}\,\omega_{l(\rho)}^{-1}\,\varphi^{(l(\rho))} }
\end{equation}
is conformal, as it is the composition of $\varphi^{(l(\rho))}$ with the fractional linear transformation $w\mapsto (1+a^{(\rho)}\,\omega_{l(\rho)}^{-1}\,w)^{-1}$. As a consequence of \eqref{eq:descrip:arho} and the definition of $\varphi^{(l(\rho))}$, the function $\eta^{(\rho)}:\mathcal{R}\longrightarrow\overline{\mathbb{C}}$ is characterized as the unique conformal mapping with a simple zero at $\infty^{(l(\rho))}$, a simple pole at $0^{(k(\rho))}$, and satisfying $\eta^{(\rho)}(\infty^{(0)})=1$. Then, \eqref{eq:ratioasympQ} and \eqref{eq:ratioasympPsink} take the simpler form
\begin{align*}
\lim_{\lambda\rightarrow\infty}\frac{Q_{\lambda p(p+1)+\rho+1}(z)}{Q_{\lambda p(p+1)+\rho}(z)} & =z \eta_{0}^{(\rho)}(z^{p+1}),\\
\lim_{\lambda\rightarrow\infty}\frac{\Psi_{\lambda p(p+1)+\rho+1,k}(z)}{\Psi_{\lambda p(p+1)+\rho,k}(z)} & =z \eta_{k}^{(\rho)}(z^{p+1}),\quad 1\leq k\leq p,
\end{align*}
where $\eta^{(\rho)}_{k}(z):=\eta^{(\rho)}(z^{(k)})$.

The paper is organized as follows. In Section 2 we introduce some notions and auxiliary results needed for the solution of the problem. In Section 3 we prove some of the statements of Theorem \ref{theo:main:2} and establish the connection between the limiting functions $\widetilde{F}_{k}^{(\rho)}$ in \eqref{limitP}, used in \cite{LopLopstar} to express the ratio asymptotics of the MOP, and certain algebraic functions defined on $\mathcal{R}$. Section 4 is devoted to the proof of Theorem \ref{theo:main:1}, what remains of Theorem \ref{theo:main:2}, and the description of the functions $\widetilde{F}_{k}^{(\rho)}$.

\section {Auxiliary results}

We briefly recall some results from \cite{LopLopstar} that will be needed. As in \cite{LopMin,LopLopstar}, in this paper we will frequently use the notation
\[
[n:n']=\{s\in\mathbb{Z}: n\leq s\leq n'\},
\]
for any two integers $n\leq n'$. If $n'<n$, then $[n:n']$ indicates the empty set.

\subsection{Reduction to the real line}

For the study of multiple orthogonal polynomials on star-like sets, it is convenient to translate the problem to the real line.

Let
$(s_0,\ldots,s_{p-1}) = \mathcal{N}(\sigma_0,\ldots,\sigma_{p-1})$ be a Nikishin system on the star-like sets defined above. Along with the measures $s_j$ we also use the measures
\begin{equation}\label{def:skj}
 s_{k,j}=\langle \sigma_k,\ldots,\sigma_j\rangle,\quad 0\leq k\leq j\leq p-1.
\end{equation}
Notice that $(s_{k,k},\ldots,s_{k,j}) = \mathcal{N}(\sigma_k,\ldots,\sigma_j)$.

For every $0\leq j\leq p-1$, we shall denote by $\sigma^*_j$ the push-forward of $\sigma_j$ under the map $z\mapsto z^{p+1}$; that is,
 $\sigma^*_j$ is the measure on $[a_j,b_j]$ such that for every Borel set $E\subset [a_j,b_j]$,
\begin{equation}\label{def:sigma:star}
\sigma^*_j(E):=\sigma_j\left(\{z:z^{p+1}\in E\}\right).
\end{equation}
With the assumptions of Theorems \ref{theo:main:1}-\ref{theo:main:2}, it follows that $\sigma^*_j$ has positive Radon-Nikodym derivative a.e. with respect to Lebesgue measure on $\Delta_j$. Set
\[
\mu_{k,k}:=\sigma^*_k, \quad 0\leq k\leq p-1,
\]
\[
d
\mu_{k,j}(\tau):=\left(\tau\int_{a_{k+1}}^{b_{k+1}}\frac{d\mu_{k+1,j}(s)}{\tau-s}
\right)d\sigma^*_{k}(\tau),\quad \tau \in [a_k,b_k],\quad 0\leq k < j\leq p-1.
\]

The measures $s_{k,j}$ and $\mu_{k,j}$ are related through the formulas \cite[Prop. 2.2]{LopMin}
\begin{align*}
\int_{\Gamma_{k}}\frac{ds_{k,j}(t)}{z-t}={}&
z^{p+k-j}\int_{a_{k}}^{b_{k}}\frac{d\mu_{k,j}(\tau)}{z^{p+1}-\tau}, \qquad 0\leq k\leq j\leq p-1.
\end{align*}
That is,
\[
\widehat{s}_{k,j}(z)=z^{p+k-j}\widehat{\mu}_{k,j}(z^{p+1}).
\]

\subsection{Functions of the second kind}

For the asymptotic analysis of the multiple orthogonal polynomials, the functions of the second kind play a crucial role.

Observe that $\Psi_{n,k}$ is analytic in $\overline{\mathbb{C}}\setminus\Gamma_{k-1}$. It is not hard to deduce that for each fixed $k = 0,\ldots,p-1$, the function $\Psi_{n,k}$ satisfies orthogonality conditions with respect to the measures $s_{k,j}$, $j=k,\ldots,p-1$, defined in \eqref{def:skj}. This and other interesting properties of the functions $\Psi_{n,k}$ may be found in \cite[Propositions 2.5--2.7]{LopMin}. The functions $\Psi_{n,k}$ are linked with other functions of the second kind $\psi_{n,k}$ that can be defined in terms of the  push-forward measures introduced before on the real line.

\begin{definition}\label{definitionpsi}
Set $\psi_{n,0}:=\mathcal{Q}_d$, where $\mathcal{Q}_{d}$ is the polynomial that appears in the relation \eqref{eq:decompQn}, and for $1\leq k\leq p$, let
$\psi_{n,k}$ be the function analytic in $\mathbb{C}\setminus
[a_{k-1},b_{k-1}]$ defined as
\[
\psi_{n,k}(z):=\begin{cases}
z\int_{\Gamma_{k-1}}\frac{\Psi_{n,k-1}(t)\,t^{k-1-\ell}}{z-t^{p+1}}\,d\sigma_{
k-1}(t), & \ell< k,\\[1em]
\int_{\Gamma_{k-1}}\frac{\Psi_{n,k-1}(t)\,t^{p+k-\ell}}{z-t^{p+1}}\,d\sigma_{k-1
}(t), & k\leq \ell,
\end{cases}
\]
where $n\equiv \ell \mod (p+1)$, $0\leq \ell\leq p$.
\end{definition}

Let $n
\equiv \ell \mod (p+1)$ with $0\leq \ell\leq p$, and define
\begin{equation}\label{varymeas:sigma}
d\sigma_{n,k}(\tau):=\begin{cases}
d\sigma_{k}^{*}(\tau), & \ell\leq k,\\
\tau\,d\sigma_{k}^{*}(\tau), & k<\ell.
\end{cases}
\end{equation}Then,
\[
z^{k-\ell}\,\Psi_{n,k}(z)=\psi_{n,k}(z^{p+1}), \quad 0\leq k\leq p,
\]
and for all $1\leq k\leq p$,
\[
\psi_{n,k}(z)=\begin{cases}
z\int_{a_{k-1}}^{b_{k-1}}\frac{\psi_{n,k-1}(\tau)}{z-\tau}\,d\sigma_{n,k-1}
(\tau), & \ell< k,\\[1em]
\int_{a_{k-1}}^{b_{k-1}}\frac{\psi_{n,k-1}(\tau)}{z-\tau}\,d\sigma_{n,k-1}(\tau), & k\leq \ell.
\end{cases}
\]

The functions of the second kind satisfy the following recurrence relations. For every $n\geq p$, $0\leq k\leq p$, we have \cite[Proposition 3.2]{LopMin}
\[
z\Psi_{n,k}(z)=\Psi_{n+1,k}(z)+a_n\Psi_{n-p,k}(z),
\]
and if $n\equiv\ell\mod(p+1)$, $0\leq \ell \leq p-1$, then
\begin{equation}\label{threetermrecsecondkindpequena1}
\psi_{n,k}(z)=\psi_{n+1,k}(z)+a_n\psi_{n-p,k}(z),
\end{equation}
while if $n\equiv p\mod(p+1)$, then
\begin{equation}\label{threetermrecsecondkindpequena2}
z\psi_{n,k}(z)=\psi_{n+1,k}(z)+a_n\psi_{n-p,k}(z).
\end{equation}

For each $k=0,\ldots,p-1$, the function $\psi_{n,k}$ satisfies orthogonality conditions with respect to the measures $\mu_{k,j}, j=k\ldots,p-1$. We have  \cite[Proposition 2.10]{LopMin}:

Let $0\leq k\leq p-1$ and assume that $n\equiv \ell \mod (p+1)$ with $0\leq
\ell\leq p$. Then the function $\psi_{n,k}$ satisfies the orthogonality conditions
 \begin{equation}\label{orthogredPsink}
\int_{a_{k}}^{b_{k}}\psi_{n,k}(\tau)\,\tau^{s}\,d\mu_{k,j}(\tau)=0,\quad
\left\lceil\frac{\ell-j}{p+1}\right\rceil\leq s\leq \left\lfloor
\frac{n+p\ell-1-j(p+1)}{p(p+1)}\right\rfloor,\quad k\leq j\leq p-1.
\end{equation}

\subsection{Counting the number of orthogonality conditions}\label{counting}

For the asymptotic analysis of the multiple orthogonal polynomials and the functions of the second kind, it is crucial to have a control on the total number of orthogonality conditions in \eqref{orthogredPsink}. We define this quantity next
in the same way it was done in \cite{LopMin}.

\begin{definition}
Let  $n$ be a nonnegative integer and let $\ell=\ell(n)$ be the integer satisfying $n\equiv \ell\mod (p+1)$, $0\leq\ell\leq p$. For each
$0\leq j\leq p-1$, let $M_j=M_j(n)$ be the number of integers $s$
satisfying the inequalities
\begin{equation}\label{countingcond}
\left\lceil\frac{\ell-j}{p+1}\right\rceil\leq s\leq \left\lfloor
\frac{n+p\ell-1-j(p+1)}{p(p+1)}\right\rfloor.
\end{equation}
For each $0\leq k\leq p-1$, we define
\begin{equation}\label{def:Znk}
 Z(n,k):=\sum_{j=k}^{p-1}M_j.
\end{equation}
Also, by convention $Z(n,p):=0$.
\end{definition}

The importance of the quantities $Z(n,k)$ resides in the following results \cite[Proposition 2.19]{LopMin}, \cite[Theorem 3.1]{LopLopstar}:

For each $n\geq 0$ and $k=0,\ldots,p-1$, the function $\psi_{n,k}$ has exactly $Z(n,k)$
zeros in $\mathbb{C}\setminus([a_{k-1},b_{k-1}]\cup\{0\})$; they are all simple
and lie in the open interval $(a_{k},b_{k})$. The function $\psi_{n,p}$ has no
zeros in $\mathbb{C}\setminus([a_{p-1},b_{p-1}]\cup\{0\})$. The zeros of $\psi_{n+1,k}$ and $\psi_{n,k}$ on $(a_k,b_k)$ interlace.

In the study of ratio asymptotics in \cite{LopLopstar},  the quantities $Z(n+1,k)-Z(n,k)$ played a key role. In \cite[Lemma 4.1]{LopLopstar} it was proved that for each fixed $0\leq k\leq p-1$, the expression $Z(n+1,k)-Z(n,k)$ is periodic in $n$ with period $p(p+1)$, and $Z(n+1,k)-Z(n,k)\in\{-1,0,1\}$ for all $n$.

\subsection{The polynomials $P_{n,k}$}

\begin{definition}
For any integers $n\geq 0$ and $k$ with $0\leq k\leq p-1$, let $P_{n,k}$ denote the monic polynomial whose roots are the zeros of $\psi_{n,k}$ in
$(a_{k},b_{k})$. For convenience we also define the polynomials $P_{n,-1}\equiv
1$, $P_{n,p}\equiv 1$.
\end{definition}

According to what was said in the previous subsection about the zeros of $\psi_{n,k}$, we know that  $P_{n,k}$ has
degree $Z(n,k)$, all its zeros are simple, and interlace those of $P_{n+1,k}$. Recall that by Definition~\ref{definitionpsi}, $P_{n,0}=\psi_{n,0}$
is the polynomial $\mathcal{Q}_{d}$ that appears in \eqref{eq:decompQn} and, therefore,
\begin{equation}\label{eq:formZn0}
Z(n,0)=\deg(P_{n,0})=\left\lfloor\frac{n}{p+1}\right\rfloor.
\end{equation}

Taking into account \eqref{eq:decompQn}, the ratio asymptotics of the polynomials $Q_n$ reduces to that of the polynomials $P_{n,0}$. What is curious is that in order to solve this problem we need to study simultaneously the ratio asymptotics of all the sequences of polynomials $(P_{n,k}), n \geq 0,$ for $k=0.\ldots,p-1$.

The starting point of the present paper is the following result proved in \cite{LopLopstar}. Note that the condition stated in Proposition~\ref{prop:ratioP} for the measures $\sigma_{k}^{*}$ is equivalent to the condition required for the measures $\sigma_{k}$ on Theorems~\ref{theo:main:1}-\ref{theo:main:2}, but we prefer to state Proposition~\ref{prop:ratioP} as it is presented in \cite{LopLopstar}.

\begin{proposition}\label{prop:ratioP}
Assume that for each $k=0,\ldots,p-1$, the measure $\sigma_{k}^{*}$ defined in \eqref{def:sigma:star} has positive Radon-Nikodym derivative with respect to Lebesgue measure a.e. on $[a_{k},b_{k}]$. Let $0\leq \rho\leq p(p+1)-1$ be fixed. The following asymptotic properties hold:
\begin{itemize}
\item[1)] For each $k=0,\ldots,p-1,$
\begin{equation}\label{limitP}
\lim_{\lambda \to \infty}\frac{P_{\lambda p(p+1)+\rho+1,k}(z)}{P_{\lambda p(p+1)+\rho,k}(z)}=\widetilde{F}_{k}^{(\rho)}(z),\qquad z\in\mathbb{C}\setminus[a_{k},b_{k}],
\end{equation}
where  $\widetilde{F}_k^{(\rho)}$ is holomorphic in $\mathbb{C}\setminus[a_{k},b_{k}]$.
\item[2)] If $\rho\not\equiv p \mod (p+1)$, then
\begin{equation}\label{eq:ratioQn:1}
\lim_{\lambda\rightarrow\infty}\frac{Q_{\lambda p(p+1)+\rho+1}(z)}{Q_{\lambda p(p+1)+\rho}(z)}=z\,\widetilde{F}_{0}^{(\rho)}(z^{p+1}),\qquad z\in\mathbb{C}\setminus\Gamma_{0},
\end{equation}
and if $\rho\equiv p\mod (p+1)$, then
\begin{equation}\label{eq:ratioQn:2}
\lim_{\lambda\rightarrow\infty}\frac{Q_{\lambda p(p+1)+\rho+1}(z)}{Q_{\lambda p(p+1)+\rho}(z)}=\frac{\widetilde{F}_{0}^{(\rho)}(z^{p+1})}{z^{p}},\qquad z\in\mathbb{C}\setminus(\Gamma_{0}\cup\{0\}).
\end{equation}
\item[3)] The sequence $(a_{n})$ of recurrence coefficients in \eqref{threetermrec} satisfies
\begin{equation}\label{eq:limreccoeff}
\lim_{\lambda\rightarrow\infty}a_{\lambda p(p+1)+\rho}=a^{(\rho)},
\end{equation}
where the limiting values $a^{(\rho)}$ appear in the Laurent expansion at infinity of $\widetilde{F}_{0}^{(\rho)}$ as follows:
\begin{equation}\label{eq:LaurentFexp}
\widetilde{F}_{0}^{(\rho)}(z)=\begin{cases}
1-a^{(\rho)} z^{-1}+O\left(z^{-2}\right), & \ \mbox{if} \ \rho\not\equiv p \mod (p+1),\\
z-a^{(\rho)}+O\left(z^{-1}\right), & \ \mbox{if} \ \rho\equiv p \mod (p+1).
\end{cases}
\end{equation}
\end{itemize}
\end{proposition}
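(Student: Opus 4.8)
This is a Rakhmanov-type ratio asymptotics theorem, so the plan is to argue by compactness together with a uniqueness principle: extract subsequential limits of all the relevant quotients, show that any such limit is forced to solve a rigid boundary value problem attached to the intervals $\Delta_{0},\dots,\Delta_{p-1}$ (equivalently, to the genus-zero surface $\mathcal{R}$), and conclude that the limits exist along the full sequence because the solution of that problem is unique. The hypothesis $\sigma_{k}^{*\prime}>0$ a.e.\ on $\Delta_{k}$ will enter at exactly one point, in the same way as in the classical theorem of Rakhmanov.

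First I would set up the normal families. By the zero-localization and interlacing facts recalled above, for each $k$ the zeros of $\psi_{n,k}$ off $[a_{k-1},b_{k-1}]\cup\{0\}$ are the $Z(n,k)$ simple zeros of $P_{n,k}$ in $(a_{k},b_{k})$, and $Z(n+1,k)-Z(n,k)\in\{-1,0,1\}$ depends on $n$ only modulo $p(p+1)$. Fixing $\rho$ and restricting to $n\equiv\rho\pmod{p(p+1)}$, the ratios $P_{n+1,k}/P_{n,k}$ are rational functions with all zeros and poles in $[a_{k},b_{k}]$ and with bounded behavior at infinity; dividing $\psi_{n,k}$ by $P_{n,k}$ and $P_{n,k-1}$ and using the Cauchy-transform relations between $\psi_{n,k}$ and $\psi_{n,k-1}$ produces functions holomorphic and non-vanishing off the relevant slits, and the whole collection (over $k$ and over the finitely many residue classes involved) is a normal family on compact subsets of the corresponding domains. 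Then I would pass to a subsequence $\Lambda\subset\{\lambda p(p+1)+\rho\}_{\lambda}$ along which $P_{n+1,k}/P_{n,k}$, the quotients $\psi_{n+1,k}/\psi_{n,k}$, and the coefficients $a_{n}$ all converge for every $k$; denote the limit of $P_{n+1,k}/P_{n,k}$ by $\Fkr$ and the limit of $a_{n}$ by $a^{(\rho)}$.

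The core of the argument is the characterization of these subsequential limits. Passing to the limit along $\Lambda$ in the Cauchy-transform identities relating $\psi_{n,k}$ to $\psi_{n,k-1}$ yields boundary relations on $\Delta_{k-1}$ and $\Delta_{k}$ linking $\Fkr$ to $\Fkmr$ and $\Fkpr$; passing to the limit in the recurrences \eqref{threetermrecsecondkindpequena1}--\eqref{threetermrecsecondkindpequena2} yields relations among the limits for neighboring values of $\rho$. The decisive input is a Rakhmanov-type estimate: since $\sigma_{k}^{*\prime}>0$ a.e.\ on $\Delta_{k}$, the boundary values from the two sides of $\Delta_{k}$ of the limiting second-kind functions have equal modulus a.e., so the limiting orthogonality measures are determined and, in particular, independent of $\Lambda$. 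Combined with holomorphy and non-vanishing off the slits and with the normalization at infinity forced by $\deg P_{n,0}=\lfloor n/(p+1)\rfloor$ in \eqref{eq:formZn0}, this identifies $(\Fkr)_{k=0}^{p-1}$ as the unique solution of a scalar boundary value problem on $\Delta_{0}\cup\dots\cup\Delta_{p-1}$; reformulated on $\mathcal{R}$, the solution is a branch of an explicit conformal map. Since the solution does not depend on $\Lambda$, the limits in \eqref{limitP} exist along the full sequence $\lambda\to\infty$. Here the bookkeeping with $Z(n,k)$ --- periodic modulo $p(p+1)$, with the residue modulo $p+1$ governing the extra factor $z^{\ell}$ in \eqref{eq:decompQn} and the residue modulo $p$ governing the Nikishin recurrence structure --- is exactly what produces the period $p(p+1)$ rather than $p$ or $p+1$.

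It remains to deduce 2) and 3). For 2), write $n=\lambda p(p+1)+\rho$; by \eqref{eq:decompQn}, $Q_{n+1}(z)/Q_{n}(z)=z^{\ell(n+1)-\ell(n)}\,P_{n+1,0}(z^{p+1})/P_{n,0}(z^{p+1})$, and $\ell(n+1)-\ell(n)$ equals $1$ when $\rho\not\equiv p\pmod{p+1}$ and equals $-p$ when $\rho\equiv p\pmod{p+1}$; letting $\lambda\to\infty$ and invoking 1) gives \eqref{eq:ratioQn:1}--\eqref{eq:ratioQn:2}. For 3), take $k=0$ in \eqref{threetermrecsecondkindpequena1}--\eqref{threetermrecsecondkindpequena2}, where $\psi_{n,0}=\mathcal{Q}_{d}=P_{n,0}$, so that $a_{n}=(P_{n,0}-P_{n+1,0})/P_{n-p,0}$ (respectively $a_{n}=(zP_{n,0}-P_{n+1,0})/P_{n-p,0}$ in the exceptional case $\rho\equiv p\pmod{p+1}$); since $P_{n+1,0}/P_{n,0}\to\widetilde{F}_{0}^{(\rho)}$ uniformly on a circle enclosing $\Delta_{0}$, the Laurent coefficients at infinity converge, and reading off the coefficient of $z^{-1}$ (respectively the constant term after the $z$-term) simultaneously yields the existence of $\lim_{\lambda}a_{\lambda p(p+1)+\rho}=a^{(\rho)}$ and the expansions \eqref{eq:LaurentFexp}.

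I expect the main obstacle to be the characterization step: extracting from the recurrences and the Cauchy-transform relations the precise boundary value problem, proving the Rakhmanov-type equal-modulus identity that makes it independent of $\Lambda$, and establishing uniqueness of its solution on $\mathcal{R}$. The remaining steps are essentially bookkeeping with the indices modulo $p(p+1)$.
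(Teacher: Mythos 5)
This proposition is not proved in the present paper at all: it is quoted verbatim from \cite{LopLopstar}, where its proof is the main content of that article, so the comparison must be with that proof. Your outline does follow the same general Rakhmanov-type scheme used there and in \cite{AptLopRocha} (normal families, characterization of subsequential limits by a boundary value problem, uniqueness of its solution, hence convergence of the full sequence with period $p(p+1)$), and your reductions of parts 2) and 3) to part 1) are essentially correct --- with the caveat that in 3) the identity $a_n=(P_{n,0}-P_{n+1,0})/P_{n-p,0}$ involves $P_{n-p,0}$, so you must telescope $P_{n,0}/P_{n-p,0}$ through the ratios for the residues $\rho-p,\dots,\rho-1$, i.e.\ invoke \eqref{limitP} for $p$ consecutive residue classes simultaneously (this is exactly how \eqref{eq:relarhoF:1}--\eqref{eq:relarhoF:2} arise), not just read off a Laurent coefficient of the single ratio for $\rho$.

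The genuine gap is in part 1), which is the substance of the theorem and which your proposal leaves as an acknowledged ``main obstacle.'' Concretely: (i) the mechanism you give for where the hypothesis $(\sigma_{k}^{*})'>0$ a.e.\ enters is not the right one. Equality of the moduli of the boundary values from the two sides of $\Delta_{k}$ is automatic, since all the functions involved are real on $\mathbb{R}$ off the slits and satisfy $f(\overline{z})=\overline{f(z)}$; it carries no information and cannot determine the limits. In \cite{LopLopstar} the hypothesis is used through a Rakhmanov-type theorem on ratio asymptotics of polynomials orthogonal with respect to \emph{varying} measures, applied to the full orthogonality relations satisfied by $P_{n,k}$ with respect to the varying weights $|H_{n,k}|\,d|\sigma_{n,k}|/|P_{n,k-1}P_{n,k+1}|$ (the objects $H_{n,k}$, $K_{n,k}$, $\sigma_{n,k}$ that reappear in Section 4 of the present paper); it is this theorem, not ``passing to the limit in the Cauchy-transform identities,'' that produces the Szeg\H{o}-type representations and hence the boundary value system of Proposition~\ref{prop:boundary} with the weights \eqref{eq:Szegoweights}. (ii) The uniqueness step is also only named: one must prove unique solvability of that specific system, including the $|\tau|^{\pm1}$ factors and the normalizations i)--iii), and its solution is a Szeg\H{o} function possibly multiplied or divided by the conformal map $\phi_{k}$ of $\overline{\mathbb{C}}\setminus\Delta_{k}$, \emph{not} a branch of a conformal map of $\mathcal{R}$: the algebraic description on $\mathcal{R}$ holds only for products of $p+1$ consecutive functions $F_{k}^{(\rho+j)}$ and is precisely the contribution of the present paper, so it is not available as an ingredient at this stage. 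Without (i) and (ii) actually carried out, your text is a roadmap of the proof in \cite{LopLopstar} rather than a proof.
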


In \cite[Section~6.3]{LopLopstar} it was proved that for each $0\leq k\leq p-1$, $0\leq \rho\leq p(p+1)-1$, the function $\widetilde{F}_k^{(\rho)}$ is either a Szeg\H{o} function in $\overline{\mathbb{C}}\setminus [a_{k},b_{k}]$, or it is the product or division of such a function by a conformal map of $\overline{\mathbb{C}} \setminus[a_{k},b_{k}]$ onto the exterior of the unit disk (for more details, see the second paragraph in the proof of Theorem~\ref{lem:descfkrho} below). From this it was shown that the limit
\begin{equation}\label{absolute}
|\widetilde{F}_k^{(\rho)}(x)| :=\lim_{z\rightarrow x}|\widetilde{F}_k^{(\rho)}(z)|, \qquad x \in [a_{k},b_{k}] \setminus \{0\},
\end{equation}
exists for all the points $x$ specified, as $z\in\mathbb{C}\setminus[a_{k},b_{k}]$ approaches $x$. If $0\in[a_{k},b_{k}]$, then \eqref{absolute} also holds for $x=0$ provided that the weight associated with the Szeg\H{o} function is positive and continuous at that point.

In \cite[Lemma 6.4]{LopLopstar} it was shown that a normalization of the functions $(\widetilde{F}_k^{(\rho)})_{k=0}^{p-1}$ constitute the solution of a system of boundary value equations which we restate here for convenience of the reader.

\begin{proposition}\label{prop:boundary}
Let $\rho\in[0:p(p+1)-1]$ be fixed, and let $\ell\in[0:p]$ be the remainder in the division of $\rho$ by $p+1$. Let $\Fkr$, $0\leq k\leq p-1$, be the limiting functions in \eqref{limitP}. Then there exist positive constants $c_{k}^{(\rho)}$ so that the collection of functions $F_{k}^{(\rho)}=c_{k}^{(\rho)} \Fkr, 0\leq k \leq p-1,$ or more precisely their absolute values (taking account of \eqref{absolute}),  satisfies the following systems of boundary value equations:
\begin{itemize}
\item[1)] When $\ell \in [0:p-1]$ (here, $[0:-1], [p:p-1]$ and $[p+1:p-1]$ denote the empty set for the corresponding values of $\ell$)
\begin{align}
\frac{|\Fk(\tau)|^{2}}{|\Fkm(\tau)||\Fkp(\tau)|}= & 1, \quad \tau\in[a_{k},b_{k}], \quad k \in [0:\ell -1] \cup [\ell +2:p-1], \label{eq:bv:lnp:1}\\
\frac{|\Fk(\tau)|^{2}\,|\tau|}{|\Fkm(\tau)||\Fkp(\tau)|}= & 1, \quad \tau\in[a_{k},b_{k}] \setminus \{0\}, \quad k = \ell, \label{eq:bv:lnp:2}\\
\frac{|\Fk(\tau)|^{2}}{|\tau||\Fkm(\tau)||\Fkp(\tau)|}= & 1, \quad \tau\in[a_{k},b_{k}] \setminus \{0\}, \quad k = \ell +1. \label{eq:bv:lnp:3}
\end{align}
(The last equation is dropped if $\ell = p-1$.)
\item[2)] For $\ell=p$, the system is
\begin{align}
\frac{|F_{0}^{(\rho)}(\tau)|^{2}}{|\tau||F_{1}^{(\rho)}(\tau)|} = & 1, \quad \tau\in [a_{0},b_{0}] \setminus \{0\},\label{eq:bv:lp:1}\\
\frac{|\Fk(\tau)|^{2}}{|\Fkm(\tau)||\Fkp(\tau)|} = & 1, \quad \tau\in[a_{k},b_{k}],\quad k\in [1:p-1].\label{eq:bv:lp:2}
\end{align}
In the above equations \eqref{eq:bv:lnp:1}--\eqref{eq:bv:lp:2}, we use the convention $F_{-1}^{(\rho)}\equiv F_{p}^{(\rho)}\equiv 1$.
\end{itemize}
Moreover, for each $\rho$ fixed, the functions $F_{k}^{(\rho)}(z), 0\leq k \leq p-1$ satisfy:
\begin{itemize}
\item[i)] $(F_k^{(\rho)})^{\pm 1} \in \mathcal{H}(\mathbb{C}\setminus [a_k,b_k])$ (holomorphic in the specified domain).
\item[ii)] The leading coefficient (corresponding to the highest power of $z$) of the Laurent expansion of $F_k^{(\rho)}$  at $\infty$ is positive.
\item[iii)] $F_k^{(\rho)}$ either has a simple pole, a simple zero, or takes a finite positive value at $\infty$. For a given $\rho \in [0:p(p+1)-1]$ and $k \in [0:p-1]$, only one of these situations occurs.
 \end{itemize}
\end{proposition}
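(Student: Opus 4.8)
The plan is to recover this from the material already recalled, since it is \cite[Lemma~6.4]{LopLopstar}; its proof has two essentially independent pieces — the structural properties i)--iii) of the normalized limits, and the two systems of boundary value equations — and I would treat them in that order.

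For i)--iii) I would argue directly from the representation $\Fkr=\lim_{\lambda\to\infty}P_{n+1,k}/P_{n,k}$, $n=\lambda p(p+1)+\rho$, supplied by Proposition~\ref{prop:ratioP}. By the zero-counting and interlacing facts recalled in Section~2, the zeros of $P_{n,k}$ and of $P_{n+1,k}$ are all simple, lie in $(a_k,b_k)$, and interlace; hence for every $n$ both $P_{n+1,k}/P_{n,k}$ and its reciprocal $P_{n,k}/P_{n+1,k}$ are holomorphic and zero-free in $\mathbb{C}\setminus[a_k,b_k]$. Passing to the limit — using also that $\Fkr\in\mathcal{H}(\mathbb{C}\setminus[a_k,b_k])$ is not identically zero — gives $(\Fkr)^{\pm1}\in\mathcal{H}(\mathbb{C}\setminus[a_k,b_k])$, and multiplying by $c_k^{(\rho)}>0$ preserves this; this is i). Evaluating at $z\to+\infty$ along the real axis, where both monic polynomials are eventually positive, shows each ratio is positive there, so the leading Laurent coefficient of $\Fkr$ at $\infty$ equals $1>0$, and scaling by $c_k^{(\rho)}$ keeps it positive; this is ii). For iii), $\deg P_{n+1,k}-\deg P_{n,k}=Z(n+1,k)-Z(n,k)$ is periodic in $n$ with period $p(p+1)$ and takes values in $\{-1,0,1\}$ by \cite[Lemma~4.1]{LopLopstar}, so along $n=\lambda p(p+1)+\rho$ it equals a fixed $\delta\in\{-1,0,1\}$ and $\Fkr$ (hence $\Fk$) behaves at $\infty$ like a nonzero constant times $z^{\delta}$: a simple zero, a finite positive value, or a simple pole — exactly one of the three.

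The substantial part is the boundary value system, and here I would run the Rakhmanov-type scheme. The functions of the second kind are coupled through their integral representations: by Definition~\ref{definitionpsi} and \eqref{varymeas:sigma}, $\psi_{n,k}$ is a Cauchy transform on $\Delta_{k-1}$ of $\psi_{n,k-1}$ against a measure built from $\sigma_{k-1}^{*}$ (possibly carrying one extra power of the variable, depending on the residue of $\ell$), so $\psi_{n,k}$ is holomorphic in $\overline{\mathbb{C}}\setminus\Delta_{k-1}$ for $1\le k\le p$, its zeros off $\Delta_{k-1}\cup\{0\}$ are exactly those of $P_{n,k}$ in $(a_k,b_k)$, and on $\Delta_{k-1}$ Sokhotski--Plemelj gives a jump relation of the form $\psi_{n,k,+}-\psi_{n,k,-}=\mp2\pi i\,\tau\,\sigma_{k-1}^{*\prime}(\tau)\,\psi_{n,k-1}(\tau)$, while on $\Delta_k$ the function $\psi_{n,k}$ is analytic with real, conjugate-symmetric boundary values (real coefficients). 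I would then factor $\psi_{n,k}=P_{n,k}H_{n,k}$ with $H_{n,k}$ holomorphic and non-vanishing near $\Delta_k$; show, exactly as in the proof of Rakhmanov's theorem and using the hypothesis $\sigma_k'>0$ a.e.\ (normal families for the ratios $\psi_{n+1,k}/\psi_{n,k}$, uniform bounds near the intervals, a.e.\ convergence of boundary values), that these ratios converge along $n=\lambda p(p+1)+\rho$ together with $P_{n+1,k}/P_{n,k}$; and combine the jump/integral relations linking $\psi_{n,k-1},\psi_{n,k},\psi_{n,k+1}$ on and around $\Delta_k$ with the conjugate symmetry of the boundary values to obtain, on each $\Delta_k$, an identity of the form
\[
\frac{|\Fkr(\tau)|^{2}}{|\Fkmr(\tau)|\,|\Fkpr(\tau)|}\,|\tau|^{\epsilon_k}=\gamma_k,\qquad \gamma_k>0,\quad \tau\in\Delta_k,
\]
with the convention $\widetilde{F}_{-1}^{(\rho)}\equiv\widetilde{F}_{p}^{(\rho)}\equiv1$ and with $\epsilon_k$ equal to $+1$ when $k\equiv\ell\ (\mathrm{mod}\ p+1)$, to $-1$ when $k\equiv\ell+1\ (\mathrm{mod}\ p+1)$, and to $0$ otherwise — so that the extra factor $|\tau|$ appears precisely at the indices picked out by \eqref{eq:bv:lnp:2}, \eqref{eq:bv:lnp:3} and, when $\ell=p$, by \eqref{eq:bv:lp:1}; this pattern is dictated by the powers of the variable in Definition~\ref{definitionpsi} and \eqref{varymeas:sigma} and by the transformation $z^{k-\ell}\Psi_{n,k}(z)=\psi_{n,k}(z^{p+1})$. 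Finally I would take $c_k^{(\rho)}>0$ to be the unique solution of the invertible tridiagonal ``discrete Laplacian'' system $2\log c_k^{(\rho)}-\log c_{k-1}^{(\rho)}-\log c_{k+1}^{(\rho)}=-\log\gamma_k$ (with $\log c_{-1}^{(\rho)}=\log c_{p}^{(\rho)}=0$); then $\Fk=c_k^{(\rho)}\Fkr$ satisfies \eqref{eq:bv:lnp:1}--\eqref{eq:bv:lp:2} together with i)--iii).

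The main obstacle is the step where one asserts that, after passing to the limit, the left-hand side of the displayed identity is a genuine constant in $\tau$ on each $\Delta_k$, rather than merely a bounded positive function; this constancy is exactly what turns the relations into a clean boundary value problem. It is obtained by a Liouville-type argument — the jumps of the three factors $\Fkmr,\Fkr,\Fkpr$ across $\Delta_k$ must cancel, leaving a function on $\overline{\mathbb{C}}$ with no zeros or poles — which itself rests on the Rakhmanov input. That, rather than any of the bookkeeping, is the part that cannot be shortcut.
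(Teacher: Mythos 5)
Your proposal should first be measured against the right target: this paper never proves Proposition~\ref{prop:boundary}. It is imported verbatim from \cite[Lemma 6.4]{LopLopstar}, and the only indications given here of how it is established are the pointers to Section~6.3 of that paper (where each $\Fkr$ is identified, up to a constant, with a Szeg\H{o}-type function for one of the weights \eqref{eq:Szegoweights}) and to the system (6.30) there that determines the constants $c_k^{(\rho)}$. With that said, your treatment of i)--iii) is correct and is essentially the standard argument: the zeros of $P_{n,k}$ and $P_{n+1,k}$ are simple, lie in $(a_k,b_k)$ and interlace, so the ratios and their reciprocals are zero-free and holomorphic in $\mathbb{C}\setminus[a_k,b_k]$; Hurwitz plus the normalization at infinity gives i), monicity gives leading coefficient $1$ (hence ii) after multiplying by $c_k^{(\rho)}>0$), and the periodicity of $Z(n+1,k)-Z(n,k)\in\{-1,0,1\}$ gives iii).

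The genuine gap is precisely at the step you flag as the crux, and your proposed mechanism for it would not work. Under the sole hypothesis $\sigma_k'>0$ a.e., the measures $\sigma_k^*$ may carry arbitrary singular parts, so there is no pointwise Sokhotski--Plemelj jump formula for $\psi_{n,k}$ of the kind you write; moreover, the relation to be proved concerns only the boundary \emph{moduli} $|\Fk|$, $|\Fkm|$, $|\Fkp|$ on $\Delta_k$, so it does not assemble into a single function on $\overline{\mathbb{C}}$ whose jumps could cancel, and the Liouville-type argument you invoke to get constancy cannot even be formulated. What actually produces \eqref{eq:bv:lnp:1}--\eqref{eq:bv:lp:2} in \cite{LopLopstar} (as in \cite{AptLopRocha} on the real line) is different: from the orthogonality conditions \eqref{orthogredPsink} one shows that $P_{n,k}$ satisfies a full system of orthogonality relations on $\Delta_k$ with respect to the varying measure $\frac{|H_{n,k}|}{|P_{n,k-1}P_{n,k+1}|}\,d|\sigma_{n,k}|$ (exactly the object entering $K_{n,k}$ in Section~4.1 above), and then a Rakhmanov--Gonchar-type theorem on ratio and relative asymptotics of polynomials orthogonal with respect to \emph{varying} measures identifies the limit $\Fkr$, up to a positive constant, as a Szeg\H{o} function for one of the weights \eqref{eq:Szegoweights}; the boundary identity with the correct power of $|\tau|$ and a positive constant $\gamma_k$ is the defining boundary property of that Szeg\H{o} function, not something recovered afterwards by analytic continuation. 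Your bookkeeping of where the factors $|\tau|^{\pm 1}$ occur and your final normalization step (the invertible tridiagonal system for $\log c_k^{(\rho)}$, i.e.\ (6.30) of \cite{LopLopstar}) are consistent with the statement, but as written the proposal assumes, rather than proves, the existence of the constants $\gamma_k$, which is the substance of the lemma.
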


In this paper, we express the functions $(\widetilde{F}_k^{(\rho)})_{k=0}^{p-1}$ in terms of the conformal mappings on the Riemann surface $\mathcal{R}$ defined in the introduction, see Theorem~\ref{theo:descFkrconf}.

Throughout the rest of the paper, we extend the $p$ sequences $\{\widetilde{F}_{k}^{(\rho)}\}_{\rho=0}^{p(p+1)-1}$, $0\leq k\leq p-1$, and their normalizations, periodically with period $p(p+1)$, to allow the super-index $\rho$ to take arbitrary integer values. So by definition we set
\[
\widetilde{F}^{(\rho+p(p+1))}_{k}\equiv \widetilde{F}^{(\rho)}_{k},\qquad \rho\in\mathbb{Z},\quad k\in[0:p-1].
\]
Recall that we also extend the sequence $\{a^{(\rho)}\}_{\rho=0}^{p(p+1)-1}$
of limiting values in \eqref{eq:limreccoeff}, periodically with period $p(p+1)$, so that
\[
a^{(\rho+p(p+1))}=a^{(\rho)},\qquad \rho\in\mathbb{Z}.
\]

\section{The boundary value problem and algebraic functions}

\subsection{Some additional notation}
Let us define $\omega_{l,j}$ as the leading coefficient in the Laurent series expansion of $\varphi_{j}^{(l)}$ at $\infty$, i.e.,
\begin{equation}\label{def:omegalj}
\omega_{l,j}:=\begin{cases}
\omega_{l} & j=0,\\[0.2em]
(\varphi_{l}^{(l)})'(\infty) & j=l,\\[0.2em]
\varphi_{j}^{(l)}(\infty) & 1\leq j\leq p,\,\,j\neq l.
\end{cases}
\end{equation}
As was proved in the Introduction, the condition $\omega_{l}>0$ implies that
\begin{equation}\label{symmRS}
\varphi_k^{(l)}(z)=\overline{\varphi_k^{(l)}(\overline{z})}, \qquad k=0,\ldots,p.
\end{equation}

Throughout the rest of the paper, we use the following notation, already employed for the functions $\widetilde{F}_{k}^{(\rho)}$. Given an arbitrary function $F(z)$ which has in a neighborhood of infinity a Laurent expansion of the form $F(z)= C z^{k}+O(z^{k-1})$, $C\neq 0$, $k\in\mathbb{Z}$, we denote $\widetilde{F}(z):=F(z)/C$. If $C$ is real, $\mathrm{sg}(F(\infty))$ will represent the sign of $C$.

The symmetry property \eqref{symmRS} implies that for each $k=0,\ldots,p,$ the function $\varphi_{k}^{(l)}$ is real-valued on $\mathbb{R}\setminus(\Delta_{k-1}\cup\Delta_{k})$, where $\Delta_{-1}=\Delta_{p}=\emptyset$. This, and the fact that $\varphi^{(l)}:\mathcal{R}\longrightarrow\overline{\mathbb{C}}$ is a bijection, easily imply the following statements, which are left to the reader to check: If $l\in\{1,\ldots,p\}$ is odd, then
\begin{equation}\label{eq:signphikl:1}
\mathrm{sg}(\varphi_{k}^{(l)}(\infty))=\begin{cases}
+1 & \mbox{for}\quad 0\leq k\leq l,\\
-1 & \mbox{for}\,\,l<k\leq p,
\end{cases}
\end{equation}
and if $l\in\{1,\ldots,p\}$ is even, then
\begin{equation}\label{eq:signphikl:2}
\mathrm{sg}(\varphi_{k}^{(l)}(\infty))=\begin{cases}
+1 & \mbox{for}\,\,0\leq k< l,\\
-1 & \mbox{for}\,\,l\leq k\leq p.
\end{cases}
\end{equation}

\subsection{Proof of $1)$ in  Theorem \ref{theo:main:2}}
\begin{proof}
We aim to prove that for each $0\leq \rho\leq p(p+1)-1$, the limiting value on the right-hand side of \eqref{eq:limreccoeff} satisfies $a^{(\rho)}>0$.

The following relations were proved in \cite{LopLopstar}, and are easily obtained applying \eqref{limitP} and \eqref{threetermrecsecondkindpequena1}--\eqref{threetermrecsecondkindpequena2} in the case $k=0$. We have
\begin{align}
a^{(\rho)} & =(z-\widetilde{F}_{0}^{(\rho)}(z)) \prod_{i=\rho-p}^{\rho-1}\widetilde{F}_{0}^{(i)}(z),\qquad z\in\mathbb{C}\setminus[a_{0},b_{0}],\quad \rho\equiv p \mod (p+1),\label{eq:relarhoF:1}\\
a^{(\rho)} & =(1-\widetilde{F}_{0}^{(\rho)}(z)) \prod_{i=\rho-p}^{\rho-1}\widetilde{F}_{0}^{(i)}(z),\qquad z\in\mathbb{C}\setminus[a_{0},b_{0}],\quad \rho\not\equiv p \mod (p+1).\label{eq:relarhoF:2}
\end{align}

Assume that $a^{(\rho)}=0$ for some $\rho$ satisfying $\rho\equiv p \mod (p+1)$. Since none of the functions $\widetilde{F}_{0}^{(i)}$ vanish on $\mathbb{C}\setminus[a_{0}, b_{0}]$ (cf. Proposition~\ref{prop:boundary}~i)), we deduce from \eqref{eq:relarhoF:1} that $\widetilde{F}_{0}^{(\rho)}(z)\equiv z$ on that domain. If $0\notin[a_{0},b_{0}]$, then $\widetilde{F}_{0}^{(\rho)}(z)$ has a zero at the origin, which contradicts Proposition~\ref{prop:boundary}~i). Suppose that $a_{0}=0$. Then, $F_{0}^{(\rho)}(z)=c_{0}^{(\rho)}\widetilde{F}_{0}^{(\rho)}(z)=c_{0}^{(\rho)} z$ and \eqref{eq:bv:lp:1} imply that $F_{1}^{(\rho)}(z)=(c_{0}^{(\rho)})^2\,z$, which contradicts the fact that $F_{1}^{(\rho)}(z)$ does not vanish in the exterior of $[a_{1},b_{1}]$, which is disjoint from $[a_{0},b_{0}]=[0,b_{0}]$.

Now assume that $a^{(\rho)}=0$ for some $\rho\not\equiv p \mod (p+1)$. Then, from \eqref{eq:relarhoF:2} we deduce that $\widetilde{F}_{0}^{(\rho)}(z)\equiv 1$ on $\mathbb{C}\setminus[a_{0},b_{0}]$.

Suppose first that $\ell=0$ (see the statement of Proposition~\ref{prop:boundary} for the definition of $\ell$). Applying \eqref{eq:bv:lnp:2} for $k=0$ we get $F_{1}^{(\rho)}(z)=(c_{0}^{(\rho)})^{2}\, z$. If $0\not\in[a_{1},b_{1}]$, then $F_{1}^{(\rho)}$ has a zero outside $[a_{1},b_{1}]$, which is contradictory with the non-vanishing property. Now assume $0\in[a_{1}, b_{1}]$, i.e., $b_{1}=0$. If $p\geq 3$, then applying \eqref{eq:bv:lnp:3} for $k=1$ we obtain that the function $F_{2}^{(\rho)}$ must have a zero at the origin, contradiction. If $p=2$, then $F_{2}^{(\rho)}\equiv 1$ by definition, and \eqref{eq:bv:lnp:3} reduces to $c |\tau|=1$, $\tau\in[a_{1},0)$, $c$ a constant, which is impossible.

Now suppose that $1\leq \ell\leq p-2$. Applying \eqref{eq:bv:lnp:1} repeatedly for $k=0,\ldots,\ell-1,$ we obtain that the functions $F_{k}^{(\rho)}$, $0\leq k\leq \ell$, are all constant in their domains. Then from equation \eqref{eq:bv:lnp:2} we deduce that $F_{\ell+1}^{(\rho)}(z)=c\,z$ for some constant $c$. If $0\not\in[a_{\ell+1},b_{\ell+1}]$, contradiction. So assume that $0\in[a_{\ell+1},b_{\ell+1}]$. From \eqref{eq:bv:lnp:3} we now obtain that $F_{\ell+2}^{(\rho)}$ must have a zero at $0\in\mathbb{C}\setminus[a_{\ell+2},b_{\ell+2}]$, which is a contradiction.

Finally, assume that $\ell=p-1$ (we also assume that $p\geq 2$). Recall that by assumption $\widetilde{F}_{0}^{(\rho)}\equiv 1$. Applying \eqref{eq:bv:lnp:1} repeatedly for $k=0,\ldots,\ell-1,$ we obtain that the functions $F_{k}^{(\rho)}$, $0\leq k\leq p-1$, are all constant in their domains. This contradicts \eqref{eq:bv:lnp:2}, since $F_{p}^{(\rho)}\equiv 1$.
\end{proof}

\subsection{A fundamental relation}

We wish to express the functions that solve the system of boundary value equations in Proposition \ref{prop:boundary} in terms of algebraic functions defined on the Riemann surface. A direct relation is hard to establish, but if one multiplies $p+1$ consecutive $F_k^{(\rho)}$ as it is done in \eqref{def:fkrho}, then such a product has a very nice representation (see \eqref{eq:expfkrho} below). In order to arrive to that formula we need to analyze the order of such products at infinity. For this purpose we introduce the following quantities.

For integers $n\geq 0$ and $k\in[0: p]$, let
\begin{equation}\label{def:Lambdank}
\Lambda(n,k):=Z(n+p+1,k)-Z(n,k).
\end{equation}
Note also that
\begin{equation}\label{eq:altformL}
\Lambda(n,k)=\sum_{j=0}^{p}\left(Z(n+j+1,k)-Z(n+j,k)\right),
\end{equation}
which will be used later.

\begin{lemma}
For any integers $n\geq 0$ and $k\in[0: p]$,
\begin{equation}\label{eq:descLambda}
\Lambda(n,k)=\begin{cases}
0, & \mbox{if}\,\,\,\,n\equiv s\mod p,\,\,\,\, s\in[0:k-1],\\
1, & \mbox{if}\,\,\,\,n\equiv s\mod p,\,\,\,\, s\in[k:p-1].
\end{cases}
\end{equation}
In particular, for each $k\in[0: p]$, $\Lambda(n,k)$ is periodic as a function of $n$ with period $p$.
\end{lemma}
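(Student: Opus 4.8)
The plan is to reduce the statement to a careful bookkeeping of the elementary differences $Z(n+1,k)-Z(n,k)$ over a complete residue system modulo $p$. Recall from the excerpt that for each fixed $k$, the quantity $Z(n+1,k)-Z(n,k)$ is periodic in $n$ with period $p(p+1)$ and takes values in $\{-1,0,1\}$. The key identity to exploit is \eqref{eq:altformL}, which expresses $\Lambda(n,k)$ as the telescoping sum of $p+1$ consecutive elementary differences starting at $n$. So the first step is: fix $k$ and compute, for each residue class $\ell$ of $n$ modulo $p+1$, how many orthogonality conditions $M_j(n)$ each function $\psi_{n,j}$ carries, using the counting inequality \eqref{countingcond}, and hence determine $Z(n+p+1,k)-Z(n,k)=\sum_{j=k}^{p-1}\bigl(M_j(n+p+1)-M_j(n)\bigr)$ directly from \eqref{def:Lambdank}.

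The cleaner route, which I would actually carry out, is to work from \eqref{def:Lambdank} and the closed form of the $M_j$. For $n\equiv\ell\bmod(p+1)$, the upper limit in \eqref{countingcond} is $\lfloor (n+p\ell-1-j(p+1))/(p(p+1))\rfloor$ and the lower limit is $\lceil(\ell-j)/(p+1)\rceil$, which depends only on $j$ and $\ell$, not on $n$ beyond its residue. When we pass from $n$ to $n+p+1$, the residue $\ell$ is unchanged, the lower limit is unchanged, and the numerator of the floor increases by $p+1$, i.e. the argument of the floor increases by exactly $1/p$. Hence $M_j(n+p+1)-M_j(n)$ equals $1$ if the fractional part of $(n+p\ell-1-j(p+1))/(p(p+1))$ lies in the top $1/p$-window that gets crossed, and $0$ otherwise; summing over $j\in[k:p-1]$ and tracking which single $j$ (if any) contributes the increment, the sum collapses to an indicator depending only on $n\bmod p$ and on $k$. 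The arithmetic here is where I expect the real work: one must show that exactly one value of $j$ in the full range $[0:p-1]$ produces the $+1$ jump as $n$ increases by $p+1$, and that this distinguished $j=j(n)$ satisfies $j(n)\ge k$ precisely when $n\bmod p\in[k:p-1]$. Reducing everything modulo $p$ and modulo $p+1$ simultaneously (the relevant modulus being $p(p+1)$, and $\gcd(p,p+1)=1$) is what makes the clean dichotomy in \eqref{eq:descLambda} emerge.

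An alternative and perhaps more transparent approach uses \eqref{eq:altformL} together with a global count: $\sum_{k=0}^{p-1}\bigl(Z(n+1,k)-Z(n,k)\bigr)$ can be related to $\deg Q_{n+1}-\deg Q_n$-type quantities or, more precisely, to the total number of zeros gained across all levels $k$ when the index advances by one. Summing $Z(n+1,k)-Z(n,k)$ over one period $n=0,\dots,p(p+1)-1$ and over $k=0,\dots,p-1$ should give a value forced by \eqref{eq:formZn0} and the analogous growth of the $M_j$, pinning down that over any block of $p+1$ consecutive $n$ the total increment $\Lambda(n,k)$ is either $0$ or $1$ and never more; combined with the value $Z(n,0)=\lfloor n/(p+1)\rfloor$ from \eqref{eq:formZn0}, which already gives $\Lambda(n,0)=1$ for all $n\ge 0$, one sees the $k=0$ case is immediate and matches \eqref{eq:descLambda} (where $[0:-1]$ is empty, so the answer is always $1$). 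Then an induction on $k$, peeling off one term $M_{k-1}$ at a time from the sum \eqref{def:Znk}, upgrades this to general $k$.

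I would organize the proof as: (1) write $\Lambda(n,k)=\sum_{j=k}^{p-1}\bigl(M_j(n+p+1)-M_j(n)\bigr)$; (2) using that $M_j(n+p+1)-M_j(n)\in\{0,1\}$ is the indicator that the half-open interval of length $1/p$ added to the upper bound of \eqref{countingcond} captures a new integer, show this indicator depends only on the pair $(n\bmod p, j, \ell)$ and, after using $n\equiv\ell\bmod(p+1)$ to eliminate $\ell$, only on $(n\bmod p, j)$; (3) prove the "exactly one $j$" lemma: as $j$ ranges over $[0:p-1]$ with $n$ fixed, precisely one $j=j(n)$ gives the increment, and $j(n)\equiv$ (something explicit, essentially $n$) $\bmod p$; (4) conclude $\Lambda(n,k)=\mathbf{1}[j(n)\ge k]=\mathbf{1}[n\bmod p\in[k:p-1]]$, which is exactly \eqref{eq:descLambda}, and periodicity in $n$ with period $p$ follows because the right-hand side depends on $n$ only through $n\bmod p$. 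The main obstacle is step (3), the floor-function arithmetic establishing that the $+1$ jumps distribute one-per-residue-class across $j$; everything else is bookkeeping, and the periodicity claim is then a free corollary of the explicit formula.
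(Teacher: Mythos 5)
Your ``cleaner route'' is exactly the strategy of the paper's own proof: hold the residue $\ell$ of $n$ modulo $p+1$ fixed, observe that replacing $n$ by $n+p+1$ leaves the lower limit in \eqref{countingcond} untouched and increases the argument of the floor in the upper limit by exactly $1/p$, and then decide, for each $j\in[k:p-1]$, whether the floor jumps. Your steps (2) and (4) and the claim in step (3) that the distinguished index satisfies $j(n)\equiv n\bmod p$ are all correct, and periodicity is indeed a free corollary. The problem is that step (3) --- which you yourself flag as ``the main obstacle'' and ``where I expect the real work'' --- is precisely the entire content of the lemma, and you never carry it out: you assert that exactly one $j\in[0:p-1]$ produces the $+1$ jump and that it lies in $[k:p-1]$ iff $n\bmod p\in[k:p-1]$, but give no floor-function computation establishing it. As it stands the proposal is a correct plan, not a proof.

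To close the gap the way the paper does: write $n=\lambda p(p+1)+\rho$ with $\rho=\eta(p+1)+\ell$, $0\leq\eta\leq p-1$, so that
\[
\frac{n+p\ell-1-j(p+1)}{p(p+1)}=\lambda+\frac{\eta+\ell-j}{p}-\frac{1}{p(p+1)},
\]
and, with $s=n\bmod p$ and $\eta+\ell=s+mp$, this equals $\lambda+m+\frac{s-j}{p}-\frac{1}{p(p+1)}$. From this explicit form one reads off directly that if $s\in[0:k-1]$ then for every $j\in[k:p-1]$ both $\lfloor\cdot\rfloor$ and $\lfloor\cdot+\tfrac1p\rfloor$ equal $\lambda+m-1$, so $\Lambda(n,k)=0$, while if $s\in[k:p-1]$ the two floors differ only at $j=s$, giving $\Lambda(n,k)=1$; this is \eqref{eq:descLambda}. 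Your ``fractional parts equally spaced by $1/p$, exactly one falls in the crossed window'' picture is a legitimate alternative way to phrase the same computation, but it still needs the explicit reduction above (or an equivalent one) to identify the distinguished $j$ with $n\bmod p$; without it the one-jump-per-residue-class claim is unproven. The alternative route you sketch in your third paragraph (global counting over a period plus induction on $k$ starting from \eqref{eq:formZn0}) is considerably vaguer --- in particular it is not clear how the global count alone would rule out $\Lambda(n,k)$ taking the value $0$ or $1$ at the wrong residues --- and is not needed once the direct computation is done.
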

\begin{proof}
For an integer $n\geq 0$, let $\ell = \ell(n)$ be the integer satisfying $n\equiv \ell\mod (p+1)$, $0\leq \ell\leq p$. According to \eqref{countingcond} and \eqref{def:Znk},
\[
Z(n,k)=\sum_{j=k}^{p-1}M_{j}(n),
\]
where
\[
M_{j}(n)=\left\lfloor\frac{n+p\,\ell(n)-1-j (p+1)}{p(p+1)}\right\rfloor-\left\lceil\frac{\ell(n)-j}{p+1}\right\rceil+1.
\]
Since $\ell(n)=\ell(n+p+1)=\ell$, we obtain
\begin{align*}
\Lambda(n,k) & =\sum_{j=k}^{p-1}\left(M_{j}(n+p+1)-M_{j}(n)\right)\\
& =\sum_{j=k}^{p-1}\left(\left\lfloor\frac{n+p+1+p\,\ell-1-j (p+1)}{p(p+1)}\right\rfloor-\left\lfloor\frac{n+p\,\ell-1-j (p+1)}{p(p+1)}\right\rfloor\right)\\
& =\sum_{j=k}^{p-1}\left(\left\lfloor\frac{n+p\,\ell-1-j (p+1)}{p(p+1)}+\frac{1}{p}\right\rfloor-\left\lfloor\frac{n+p\,\ell-1-j (p+1)}{p(p+1)}\right\rfloor\right).
\end{align*}
We can write
\[
n=\lambda p(p+1)+\rho,\qquad \lambda\geq 0,\quad 0\leq \rho\leq p(p+1)-1,
\]
and decompose $\rho$ as
\[
\rho=\eta (p+1)+\ell,\qquad 0\leq \eta\leq p-1.
\]
Then
\[
\frac{n+p\, \ell-1-j(p+1)}{p(p+1)}=\lambda+\frac{\eta+\ell-j}{p}-\frac{1}{p(p+1)}.
\]
Let $s\in[0:p-1]$ be the residue of $n$ modulo $p$. Note that $n\equiv (\eta+\ell) \mod p$, so if we write $\eta+\ell=s+mp$ for some integer $m$, we get
\begin{equation}\label{eq:decomp:1}
\frac{n+p\, \ell-1-j(p+1)}{p(p+1)}=\lambda+m+\frac{s-j}{p}-\frac{1}{p(p+1)}.
\end{equation}

Assume first that $s\in[0:k-1]$. Then, from \eqref{eq:decomp:1} we obtain that for every $j\in[k:p-1]$,
\[
\left\lfloor\frac{n+p\, \ell-1-j(p+1)}{p(p+1)}\right\rfloor=\left\lfloor\frac{n+p \ell-1-j(p+1)}{p(p+1)}+\frac{1}{p}\right\rfloor=\lambda+m-1,
\]
which implies that $\Lambda(n,k)=0$. If $s\in[k:p-1]$, then
\[
\left\lfloor\frac{n+p\, \ell-1-j(p+1)}{p(p+1)}\right\rfloor=\begin{cases}
\lambda+m & \mbox{if}\quad k\leq j\leq s-1,\\
\lambda+m-1 & \mbox{if}\quad s\leq j\leq p-1,
\end{cases}
\]
and
\[
\left\lfloor\frac{n+p\, \ell-1-j(p+1)}{p(p+1)}+\frac{1}{p}\right\rfloor=\begin{cases}
\lambda+m & \mbox{if}\quad k\leq j\leq s,\\
\lambda+m-1 & \mbox{if}\quad s+1\leq j\leq p-1,
\end{cases}
\]
which implies that in this case $\Lambda(n,k)=1$.
\end{proof}

\begin{definition}
For each $k\in[0:p-1]$ and $\rho\in\mathbb{Z}$, we define
\begin{equation}\label{def:fkrho}
f_{k}^{(\rho)}(z):=\prod_{j=0}^{p} F_{k}^{(\rho+j)}(z), \qquad z \in \mathbb{C} \setminus [a_k,b_k].
\end{equation}
We also set $f_{-1}^{(\rho)}\equiv f_{p}^{(\rho)}\equiv 1$.

\end{definition}

\begin{theorem}\label{lem:descfkrho}
The functions defined in \eqref{def:fkrho} satisfy the following properties for each $k\in[0:p-1]$ and $\rho\in[0:p(p+1)-1]$:
\begin{itemize}
\item[1)] $(f_{k}^{(\rho)})^{\pm 1}\in\mathcal{H}(\mathbb{C}\setminus\Delta_{k})$, and as $z\rightarrow\infty$,
\begin{equation}\label{eq:estfkrhoinf}
f_{k}^{(\rho)}(z)=c_{k,\rho}\,z^{\Lambda(\rho,k)}(1+O(z^{-1})),
\end{equation}
where $c_{k,\rho}$ is a positive constant, and $\Lambda(\rho,k)$ is described in \eqref{eq:descLambda}.
\item[2)] The function $|f_{k}^{(\rho)}|$ has continuous and strictly positive boundary values on all $\Delta_{k}$ and we have
\begin{equation}\label{eq:boundvalfkr}
\frac{|f_{k}^{(\rho)}(\tau)|^{2}}{|f_{k-1}^{(\rho)}(\tau)||f_{k+1}^{(\rho)}(\tau)|}=1,\qquad \tau\in\Delta_{k}.
\end{equation}
\item[3)] Let $l=l(\rho)$ be the integer determined by the conditions $l-1\equiv \rho \mod p$ and $1\leq l\leq p$. Then
\begin{equation}\label{eq:expfkrho}
f_{k}^{(\rho)}(z)=\mathrm{sg}\left(\prod_{\nu=k+1}^{p}\varphi_{\nu}^{(l)}(\infty)\right)\prod_{\nu=k+1}^{p}\varphi_{\nu}^{(l)}(z),\qquad z\in\overline{\mathbb{C}}\setminus\Delta_{k},
\end{equation}
where
\begin{equation}\label{eq:signfkrho}
\mathrm{sg}\left(\prod_{\nu=k+1}^{p}\varphi_{\nu}^{(l)}(\infty)\right)=\begin{cases}
(-1)^{p+1} & \mbox{if}\quad 0\leq k\leq l-1,\\[0.5em]
(-1)^{p+k} & \mbox{if}\quad l\leq k\leq p-1.
\end{cases}
\end{equation}
\end{itemize}
\end{theorem}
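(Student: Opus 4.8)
The plan is to establish items 1)--3) in sequence, using the boundary value system of Proposition~\ref{prop:boundary} to pin down $f_k^{(\rho)}$ as a solution of a scalar boundary value problem on the chain of intervals $\Delta_0,\dots,\Delta_{p-1}$, and then to match it against the explicitly constructed product of branches of $\varphi^{(l)}$. First I would prove 1): holomorphy of $f_k^{(\rho)}$ and of $(f_k^{(\rho)})^{-1}$ in $\mathbb{C}\setminus\Delta_k$ is immediate from the definition \eqref{def:fkrho} together with Proposition~\ref{prop:boundary}~i), since a finite product of nonvanishing holomorphic functions is nonvanishing and holomorphic. For the behaviour at infinity, I would write each factor $F_k^{(\rho+j)}$ using its Laurent expansion at $\infty$; by Proposition~\ref{prop:boundary}~ii)--iii) each has a positive leading coefficient and order $-1$, $0$, or $+1$. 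The order of $f_k^{(\rho)}$ at infinity is then the sum of the orders of the $p+1$ factors, which is precisely $\sum_{j=0}^{p}(Z(\rho+j+1,k)-Z(\rho+j,k))$ up to the discrepancy caused by the two "exceptional" congruence classes; invoking \eqref{eq:altformL} this sum is exactly $\Lambda(\rho,k)$, and the preceding Lemma gives its value via \eqref{eq:descLambda}. The leading coefficient $c_{k,\rho}$ is a product of positive numbers, hence positive, giving \eqref{eq:estfkrhoinf}.

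Next I would prove 2). The continuity and strict positivity of $|f_k^{(\rho)}|$ on $\Delta_k$ follows factor-by-factor from \eqref{absolute} and the discussion around it (each $F_k^{(\rho+j)}$ is a Szeg\H{o} function, possibly times or divided by a conformal map onto the exterior of the disk, whose modulus extends continuously and positively to $\Delta_k\setminus\{0\}$, and also to $0$ when $0\in\Delta_k$ because the relevant weights are positive and continuous there under our hypotheses). For the boundary relation \eqref{eq:boundvalfkr}, I would multiply the relevant equations among \eqref{eq:bv:lnp:1}--\eqref{eq:bv:lp:2} over the $p+1$ consecutive superindices $\rho,\rho+1,\dots,\rho+p$. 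The key bookkeeping observation is that as $\rho+j$ ranges over $p+1$ consecutive integers, the residue $\ell=\ell(\rho+j)$ takes each value in $[0:p]$ exactly once; hence for a fixed interval index $k$, among the $p+1$ boundary equations contributed at $\Delta_k$ we pick up the "plain" relation \eqref{eq:bv:lnp:1} for all but (at most) two values of $j$, one extra factor $|\tau|$ from the $\ell=k$ equation \eqref{eq:bv:lnp:2} or \eqref{eq:bv:lp:1}, and one factor $|\tau|^{-1}$ from the $\ell=k+1$ equation \eqref{eq:bv:lnp:3} or \eqref{eq:bv:lp:2}. These two stray factors of $|\tau|$ cancel, and the telescoped product of the $|F^{(\cdot)}|$'s regroups exactly into $|f_k^{(\rho)}|^2/(|f_{k-1}^{(\rho)}||f_{k+1}^{(\rho)}|)$. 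Care is needed at the endpoints $k=0$ (where $\ell=k$ uses \eqref{eq:bv:lp:1}) and $k=p-1$ (where the $\ell=k+1=p$ equation is \eqref{eq:bv:lp:2}, i.e. the $\ell=p$ system), and with the conventions $F_{-1}^{(\rho)}\equiv F_p^{(\rho)}\equiv 1$; I would verify that the same identity \eqref{eq:boundvalfkr} results in all cases, with the endpoint convention $f_{-1}^{(\rho)}\equiv f_p^{(\rho)}\equiv 1$.

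Finally, for 3) I would introduce the candidate function $g_k(z):=\mathrm{sg}\bigl(\prod_{\nu=k+1}^{p}\varphi_\nu^{(l)}(\infty)\bigr)\prod_{\nu=k+1}^{p}\varphi_\nu^{(l)}(z)$ and show it satisfies the same normalized boundary value problem that characterizes $f_k^{(\rho)}$. Because $\varphi^{(l)}:\mathcal{R}\to\overline{\mathbb{C}}$ is a bijection, each branch $\varphi_\nu^{(l)}$ is nonvanishing and holomorphic on $\overline{\mathbb{C}}\setminus(\Delta_{\nu-1}\cup\Delta_\nu)$, so $g_k$ and $g_k^{-1}$ are holomorphic on $\overline{\mathbb{C}}\setminus(\Delta_k\cup\cdots\cup\Delta_p)=\overline{\mathbb{C}}\setminus\Delta_k$ after using that consecutive branches glue continuously across the slits; the glueing relation $\varphi_{\nu-1}^{(l)}=\varphi_\nu^{(l)}$ on $\Delta_{\nu-1}$ (upper and lower banks identified) is exactly what makes the products over $\nu=k+1,\dots,p$ cancel down to a function with a jump only on $\Delta_k$, and there the boundary relation $|g_k|^2=|g_{k-1}||g_{k+1}|$ drops out of the same glueing after taking moduli. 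The order of $g_k$ at infinity is $\sum_{\nu=k+1}^{p}\operatorname{ord}_\infty\varphi_\nu^{(l)}$, which is $1$ coming from the unique $\nu=l$ factor when $l\ge k+1$ (i.e. $k\le l-1$) and $0$ otherwise ($l\le k$), matching \eqref{eq:descLambda} after checking $\Lambda(\rho,k)$ has the same value for $l=l(\rho)$; the sign prefactor is chosen precisely so that the leading coefficient is positive, and a short computation with \eqref{eq:signphikl:1}--\eqref{eq:signphikl:2} yields the case split \eqref{eq:signfkrho}. With both $f_k^{(\rho)}$ and $g_k$ holomorphic and nonvanishing off $\Delta_k$, sharing the same positive leading term and the same order at infinity, and with equal moduli on the boundary $\Delta_k$, the quotient $f_k^{(\rho)}/g_k$ extends to an entire function on $\overline{\mathbb{C}}$ of modulus $1$ on a set with nonempty interior—hence a positive constant, and comparing leading coefficients forces it to be $1$. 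The main obstacle I anticipate is the combinatorial endpoint analysis in step 2): correctly tracking which of \eqref{eq:bv:lnp:1}--\eqref{eq:bv:lp:2} is active for each of the $p+1$ superindices at each interval $\Delta_k$, handling $k=0$, $k=l$, $k=l+1$, $k=p-1$ and the degenerate $p=2$ cases uniformly, and ensuring the stray $|\tau|^{\pm1}$ factors always cancel; closely related is the bookkeeping in step 3) to see that the branch orders and signs reproduce exactly the $\Lambda(\rho,k)$ and sign formulas, which requires keeping the parity of $p$ and the position of $l$ relative to $k$ straight throughout.
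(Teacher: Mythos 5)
Parts 1) and 2) of your plan essentially reproduce the paper's argument: the order of $f_{k}^{(\rho)}$ at infinity comes from the degree count $\deg P_{n,k}=Z(n,k)$ applied factor by factor together with \eqref{eq:altformL}, and \eqref{eq:boundvalfkr} comes from multiplying the boundary equations of Proposition~\ref{prop:boundary} over the $p+1$ consecutive superindices, with exactly one factor $|\tau|$ and one factor $|\tau|^{-1}$ appearing and cancelling. Those two steps are sound.

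The genuine gap is in part 3), at the point where you identify $f_{k}^{(\rho)}$ with the candidate $g_{k}$. You assert that $f_{k}^{(\rho)}$ and $g_{k}$ have ``equal moduli on the boundary $\Delta_{k}$'' and then conclude via a maximum-modulus type argument. But nothing you have established gives that equality: what \eqref{eq:boundvalfkr} (and its analogue for $g_{k}$, which you correctly obtain from the glueing of the sheets) provides is the \emph{coupled} relation $|f_{k}^{(\rho)}|^{2}=|f_{k-1}^{(\rho)}|\,|f_{k+1}^{(\rho)}|$ on $\Delta_{k}$, linking three different functions on three different intervals. Knowing that the two families $\{f_{k}^{(\rho)}\}_{k=0}^{p-1}$ and $\{g_{k}\}_{k=0}^{p-1}$ both solve this coupled system with the same orders and positive leading coefficients at infinity does not, for a fixed $k$, yield $|f_{k}^{(\rho)}|=|g_{k}|$ on $\Delta_{k}$; proving that all the quotients $h_{k}=f_{k}^{(\rho)}/g_{k}$ are identically $1$ is precisely the uniqueness statement for the \emph{system}, and it requires a simultaneous argument in all $k$ (for instance, the bounded harmonic functions $u_{k}=\log|h_{k}|$ satisfy $2u_{k}=u_{k-1}+u_{k+1}$ on $\Delta_{k}$ with $u_{-1}=u_{p}\equiv 0$, and one shows via the maximum principle that a maximal $u_{k}$ forces all of them to vanish). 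This is exactly what the paper imports from \cite[Lemma 4.2]{AptLopRocha}; your proposal neither cites such a result nor supplies the argument, so the identification \eqref{eq:expfkrho} is not justified as written. A secondary flaw in the same step: even granting equal moduli on $\Delta_{k}$, the reasoning ``modulus $1$ on a set with nonempty interior, hence constant'' is invalid, since $\Delta_{k}$ has empty interior in $\mathbb{C}$; the correct conclusion would come from applying the maximum principle to $h_{k}$ and $1/h_{k}$ in $\overline{\mathbb{C}}\setminus\Delta_{k}$, not from an interior-uniqueness argument.
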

\begin{proof}
Recall that the polynomial $P_{n,k}$ has degree $Z(n,k)$. Therefore, from \eqref{limitP} and the fact that $Z(n+1,k)-Z(n,k)$ is periodic with respect to $n$ with period $p(p+1)$, it follows that
\begin{equation}\label{eq:estFkrhoinf}
\widetilde{F}_{k}^{(\rho+j)}(z)=z^{Z(\rho+j+1,k)-Z(\rho+j,k)}(1+O(z^{-1})),\qquad z\rightarrow\infty,\quad 0\leq j\leq p.
\end{equation}
Since $F_{k}^{(\rho+j)}=\widetilde{c}_{k,j} \widetilde{F}_{k}^{(\rho+j)}(z)$, where $\widetilde{c}_{k,j}$ is a positive constant (cf. Proposition~\ref{prop:boundary}), multiplying the $p+1$ estimates in \eqref{eq:estFkrhoinf} and applying \eqref{eq:altformL} and \eqref{def:fkrho}, we obtain \eqref{eq:estfkrhoinf}. We have $(f_{k}^{(\rho)})^{\pm 1}\in\mathcal{H}(\mathbb{C}\setminus\Delta_{k})$ since none of the functions $F_{k}^{(\rho)}$ vanish on $\mathbb{C}\setminus\Delta_{k}$.

In \cite[Section~6.3]{LopLopstar} it was shown the following. Up to a multiplicative constant, each function $\widetilde{F}_{k}^{(\rho+j)}$, $0\leq j\leq p$, can be expressed either as a Szeg\H{o} function, or as a Szeg\H{o} function multiplied or divided by the conformal mapping $\phi_{k}$ from the exterior of $\Delta_{k}$ onto the exterior of the unit circle that satisfies $\phi_{k}(\infty)=\infty$ and $\phi_{k}'(\infty)>0$. The Szeg\H{o} function in the expression of $\widetilde{F}_{k}^{(\rho+j)}$ is associated with a weight that takes one of the following three forms:
\begin{equation}\label{eq:Szegoweights}
\frac{1}{|\widetilde{F}_{k-1}^{(\rho+j)} (\tau)||\widetilde{F}_{k+1}^{(\rho+j)} (\tau)|}, \qquad \frac{|\tau|}{|\widetilde{F}_{k-1}^{(\rho+j)} (\tau)||\widetilde{F}_{k+1}^{(\rho+j)} (\tau)|},\qquad \frac{1}{|\tau||\widetilde{F}_{k-1}^{(\rho+j)} (\tau)||\widetilde{F}_{k+1}^{(\rho+j)}(\tau)|}.\end{equation}
A careful analysis of the different cases described in \cite[Section~6.3]{LopLopstar}, shows that as $j$ varies in the range $[0:p]$, exactly one $j$ corresponds to a weight of the second type (the $j$ satisfying $\rho+j \equiv k \mod (p+1)$), exactly one $j$ corresponds to a weight of the third type (the $j$ satisfying $\rho+j \equiv (k-1) \mod (p+1)$), and all other $j$ correspond to a weight of the first type. By the multiplicative property of Szeg\H{o} functions, the possible singularities that $|\tau|$ and $1/|\tau|$ in \eqref{eq:Szegoweights} may cause at the origin will not be present in the product $f_{k}^{(\rho)}$. Hence, $|f_{k}^{(\rho)}|$ will have continuous and non-vanishing boundary values on all $\Delta_{k}$. Multiplying the different boundary value equations in Proposition~\ref{prop:boundary} for the different indices $\rho+j$, $0\leq j\leq p$, we obtain \eqref{eq:boundvalfkr} (the reader can also observe the cancellation between $|\tau|$ and $1/|\tau|$ after multiplying these equations).

Let $\rho\in[0:p(p+1)-1]$ be fixed, and let $l$ be the integer satisfying $l-1\equiv \rho \mod p$, $1\leq l\leq p$. Then we have shown that the system of functions $\{f_{k}^{(\rho)}\}_{k=0}^{p-1}$ satisfies the following conditions:
\begin{itemize}
\item[a)] $f_{k}^{(\rho)}, 1/f_{k}^{(\rho)}\in\mathcal{H}(\mathbb{C}\setminus\Delta_{k})$, $k=0,\ldots,p-1$.
\item[b)] In virtue of \eqref{eq:descLambda} and \eqref{eq:estfkrhoinf}, as $z\rightarrow\infty$ we have
\[
f_{k}^{(\rho)}(z)=\begin{cases}
c_{k,\rho} z+O(1),\quad 0\leq k\leq l-1,\\
c_{k,\rho}+O(z^{-1}),\quad l\leq k\leq p-1,
\end{cases}
\]
where $c_{k,\rho}>0$ for all $0\leq k\leq p-1$.
\item[c)] The boundary value relation \eqref{eq:boundvalfkr} holds for each $0\leq k\leq p-1$.
\end{itemize}

In \cite[Lemma 4.2]{AptLopRocha} it was proved that the boundary value problem a)-b)-c) has a unique solution and it is precisely given by
\[
f_{k}^{(\rho)}(z)=\mathrm{sg}\left(\prod_{\nu=k+1}^{p}\varphi_{\nu}^{(l)}(\infty)\right)\prod_{\nu=k+1}^{p}\varphi_{\nu}^{(l)}(z),\qquad z\in\overline{\mathbb{C}}\setminus\Delta_{k}.
\]
Formula \eqref{eq:signfkrho} follows immediately from \eqref{eq:signphikl:1} and \eqref{eq:signphikl:2}.
\end{proof}

\begin{corollary}
The following properties hold:
\begin{itemize}
\item[1)] For each $\rho\in[0:p(p+1)-1]$ and $k\in[0:p-1]$,
\begin{equation}\label{eq:perfkrho}
f_{k}^{(\rho)}\equiv f_{k}^{(\rho+p)}.
\end{equation}
\item[2)] For each $\rho\in[0:p(p+1)-1]$ and $k\in[0:p-1]$,
\begin{equation}\label{eq:idprodFkrho}
\prod_{i=\rho}^{\rho+p-1} F_{k}^{(i)}\equiv \prod_{i=\rho+p+1}^{\rho+2p} F_{k}^{(i)},\qquad \prod_{i=\rho}^{\rho+p-1} \widetilde{F}_{k}^{(i)}\equiv \prod_{i=\rho+p+1}^{\rho+2p} \widetilde{F}_{k}^{(i)}.
\end{equation}
\item[3)] For each $\rho\in[0:p(p+1)-1]$,
\begin{equation}\label{eq:idsumakrho}
\sum_{i=\rho}^{\rho+p-1}a^{(i)}=\sum_{i=\rho+p+1}^{\rho+2p} a^{(i)}.
\end{equation}
\item[4)] For each $\rho\in[0:p(p+1)-1]$ and $z\in\mathbb{C}\setminus\Delta_{0}$ we have
\begin{align}
\frac{a^{(\rho+p+1)}}{a^{(\rho)}} & =\frac{z-\widetilde{F}_{0}^{(\rho+p+1)}(z)}{z-\widetilde{F}_{0}^{(\rho)}(z)}\qquad\mbox{if}\,\,\,\,\rho\equiv p \mod (p+1),\label{eq:idquotFkrho:1}\\
\frac{a^{(\rho+p+1)}}{a^{(\rho)}} & =\frac{1-\widetilde{F}_{0}^{(\rho+p+1)}(z)}{1-\widetilde{F}_{0}^{(\rho)}(z)}\qquad\mbox{if}\,\,\,\,\rho\not\equiv p \mod (p+1).\label{eq:idquotFkrho:2}
\end{align}
\end{itemize}
\end{corollary}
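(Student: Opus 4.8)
The plan is to derive all four statements as straightforward consequences of Theorem~\ref{lem:descfkrho}, in particular of the product formula \eqref{eq:expfkrho}, together with the relations \eqref{eq:relarhoF:1}--\eqref{eq:relarhoF:2} recalled in the proof of part~1) of Theorem~\ref{theo:main:2}. The key observation for part~1) is that the integer $l=l(\rho)$ appearing in \eqref{eq:expfkrho} depends on $\rho$ only through its residue modulo $p$: since $l-1\equiv\rho\mod p$, we have $l(\rho+p)=l(\rho)$. Hence the right-hand side of \eqref{eq:expfkrho} is literally unchanged when $\rho$ is replaced by $\rho+p$, which gives $f_k^{(\rho)}\equiv f_k^{(\rho+p)}$ for $0\leq k\leq p-1$; the cases $k=-1,p$ are trivial by the convention $f_{-1}^{(\rho)}\equiv f_p^{(\rho)}\equiv 1$. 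I would also remark that this periodicity is consistent with \eqref{eq:perfkrho} being a genuine identity of holomorphic functions on $\mathbb{C}\setminus\Delta_k$, not merely of absolute values, since \eqref{eq:expfkrho} is an equality of holomorphic functions.

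For part~2), I would expand the definition \eqref{def:fkrho}: by construction $f_k^{(\rho)}=\prod_{j=0}^{p}F_k^{(\rho+j)}=F_k^{(\rho)}\prod_{i=\rho+1}^{\rho+p}F_k^{(i)}$, and similarly $f_k^{(\rho+p+1)}=F_k^{(\rho+2p+1)}\prod_{i=\rho+p+1}^{\rho+2p}F_k^{(i)}$. Applying \eqref{eq:perfkrho} with the shift $\rho\mapsto\rho+p+1$ — note $f_k^{(\rho+p+1)}=f_k^{(\rho+2p+1)}$ — one also has $f_k^{(\rho+1)}=f_k^{(\rho+p+1)}$ by applying \eqref{eq:perfkrho} to $\rho+1$. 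Writing $f_k^{(\rho+1)}=\prod_{j=0}^{p}F_k^{(\rho+1+j)}=\bigl(\prod_{i=\rho+1}^{\rho+p}F_k^{(i)}\bigr)F_k^{(\rho+p+1)}$ and comparing with $f_k^{(\rho)}=F_k^{(\rho)}\prod_{i=\rho+1}^{\rho+p}F_k^{(i)}$, the common factor $\prod_{i=\rho+1}^{\rho+p}F_k^{(i)}$ (which is non-vanishing on $\mathbb{C}\setminus\Delta_k$ by Proposition~\ref{prop:boundary}~i)) cancels, leaving $F_k^{(\rho)}\equiv F_k^{(\rho+p+1)}$. Telescoping this one-step identity over $p$ consecutive values of $\rho$ yields $\prod_{i=\rho}^{\rho+p-1}F_k^{(i)}\equiv\prod_{i=\rho+p+1}^{\rho+2p}F_k^{(i)}$; dividing out the positive normalizing constants $\widetilde c_{k,i}$ gives the companion identity for the $\widetilde F_k^{(i)}$, which is \eqref{eq:idprodFkrho}.

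For part~3), I would take $k=0$ in \eqref{eq:idprodFkrho} and compare Laurent expansions at $\infty$. From Proposition~\ref{prop:ratioP}, \eqref{eq:LaurentFexp}, each $\widetilde F_0^{(i)}$ has the form $1-a^{(i)}z^{-1}+O(z^{-2})$ when $i\not\equiv p\mod(p+1)$ and $z-a^{(i)}+O(z^{-1})$ when $i\equiv p\mod(p+1)$; in any block of $p$ consecutive indices exactly one of them is $\equiv p\mod(p+1)$ (since $\gcd(p,p+1)=1$ guarantees the residues mod $p+1$ are distinct, but more simply $p$ consecutive integers hit each residue mod $p+1$ at most once and hit $p$ exactly once when... ) — here I would instead just note that $\widetilde F_0^{(i)}(z)/z^{\varepsilon_i}=1-a^{(i)}z^{-1}+O(z^{-2})$ with $\varepsilon_i\in\{0,1\}$ and $\sum_{i=\rho}^{\rho+p-1}\varepsilon_i$ the same on both sides of \eqref{eq:idprodFkrho} by periodicity of the degree increments, so taking logarithmic derivatives (or reading off the $z^{-1}$ coefficient of the product) gives $-\sum_{i=\rho}^{\rho+p-1}a^{(i)}=-\sum_{i=\rho+p+1}^{\rho+2p}a^{(i)}$, which is \eqref{eq:idsumakrho}. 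The main obstacle here is purely bookkeeping: making sure the leading-power exponents match on the two sides so that the $O(z^{-1})$ comparison is legitimate, but this is exactly guaranteed by the $p(p+1)$-periodicity of $Z(n+1,k)-Z(n,k)$ together with $f_0^{(\rho)}\equiv f_0^{(\rho+p)}$, which forces $\sum_{i=\rho}^{\rho+p-1}\varepsilon_i$ to be independent of $\rho$ in fact... so I would phrase the exponent-matching as a short lemma-free remark. Finally, for part~4), I would simply divide \eqref{eq:relarhoF:1} (resp.\ \eqref{eq:relarhoF:2}) written for $\rho+p+1$ by the same relation written for $\rho$: since $\rho+p+1\equiv\rho\mod(p+1)$, both numerator and denominator are of the same ($p$-shifted, $p+1$-matching) type, and the crucial point is that the product factors coincide, $\prod_{i=\rho+1}^{\rho+p}\widetilde F_0^{(i)}\equiv\prod_{i=\rho+p+2}^{\rho+2p+1}\widetilde F_0^{(i)}$, which is exactly \eqref{eq:idprodFkrho} with $k=0$ after a unit shift of the index $\rho$; cancelling this common nonzero factor yields \eqref{eq:idquotFkrho:1}--\eqref{eq:idquotFkrho:2}. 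I do not anticipate a genuine obstacle in any part — the entire corollary is a formal unwinding of \eqref{eq:expfkrho} and \eqref{eq:LaurentFexp} — the only care needed is the consistent tracking of residues modulo $p$ versus modulo $p+1$.
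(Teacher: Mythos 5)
Your derivation of part 2) contains a genuine error. You compare $f_k^{(\rho)}=F_k^{(\rho)}\prod_{i=\rho+1}^{\rho+p}F_k^{(i)}$ with $f_k^{(\rho+1)}=\bigl(\prod_{i=\rho+1}^{\rho+p}F_k^{(i)}\bigr)F_k^{(\rho+p+1)}$ and cancel the common factor, but that cancellation presupposes $f_k^{(\rho)}\equiv f_k^{(\rho+1)}$, i.e.\ periodicity of $f_k^{(\rho)}$ in $\rho$ with period $1$. Part 1) and \eqref{eq:expfkrho} only give period $p$: the function $f_k^{(\rho)}$ depends on $\rho$ through $l(\rho)$, and $l(\rho+1)\neq l(\rho)$ in general. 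The intermediate claim you extract, $F_k^{(\rho)}\equiv F_k^{(\rho+p+1)}$, is not just unproved but false: taking $k=0$, matching leading coefficients at $\infty$ and using \eqref{eq:LaurentFexp} it would force $a^{(\rho)}=a^{(\rho+p+1)}$ for every $\rho$, contradicting part 2) of Theorem \ref{theo:main:2} (the $p\geq 2$ values $\{a^{(\rho+m(p+1))}\}_{m=0}^{p-1}$ are distinct), and it would also make your own part 4) assert that the ratio equals $1$. The correct argument is simpler: $f_k^{(\rho)}=\prod_{i=\rho}^{\rho+p}F_k^{(i)}$ and $f_k^{(\rho+p)}=\prod_{i=\rho+p}^{\rho+2p}F_k^{(i)}$ overlap in exactly the single factor $F_k^{(\rho+p)}$, which is non-vanishing on $\mathbb{C}\setminus\Delta_k$; dividing the identity $f_k^{(\rho)}\equiv f_k^{(\rho+p)}$ of part 1) by that factor gives \eqref{eq:idprodFkrho} at once (and the tilde version by comparing the positive normalizing constants). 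This is exactly the paper's one-line proof.

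The remaining parts of your proposal are essentially the paper's route and go through once part 2) is repaired: part 1) from \eqref{eq:expfkrho} and $l(\rho+p)=l(\rho)$; part 3) by taking $k=0$ in \eqref{eq:idprodFkrho} and comparing the Laurent expansions at $\infty$, the leading powers matching because $\rho$ and $\rho+p+1$ have the same residue modulo $p+1$; part 4) by dividing \eqref{eq:relarhoF:1}, resp.\ \eqref{eq:relarhoF:2}, written for $\rho+p+1$ by the same relation written for $\rho$. One small index slip in part 4): the products you must identify are $\prod_{i=\rho-p}^{\rho-1}\widetilde F_0^{(i)}$ and $\prod_{i=\rho+1}^{\rho+p}\widetilde F_0^{(i)}$, i.e.\ \eqref{eq:idprodFkrho} applied at $\rho-p$ (a shift by $-p$), not the instance $\prod_{i=\rho+1}^{\rho+p}\equiv\prod_{i=\rho+p+2}^{\rho+2p+1}$ that you quote; and one also needs $a^{(\rho)}>0$ (Theorem \ref{theo:main:2}, part 1)) to divide.
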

\begin{proof}
The relation \eqref{eq:perfkrho} follows immediately from \eqref{eq:expfkrho} since $l(\rho)=l(\rho+p)$, and \eqref{eq:idprodFkrho} is obtained dividing both sides of \eqref{eq:perfkrho} by $F_{k}^{(\rho+p)}$.

Taking $k=0$ in \eqref{eq:idprodFkrho} we get
\begin{equation}\label{eq:relnormFkrho}
\prod_{i=\rho}^{\rho+p-1}\widetilde{F}_{0}^{(i)}\equiv \prod_{i=\rho+p+1}^{\rho+2p} \widetilde{F}_{0}^{(i)}.
\end{equation}
In virtue of \eqref{eq:LaurentFexp}, as $z\rightarrow\infty$ we have
\begin{equation}\label{eq:relexpFkrho}
\prod_{i=\rho}^{\rho+p-1} \widetilde{F}_{0}^{(i)}(z)=
\begin{cases}
1-\left(\sum_{i=\rho}^{\rho+p-1}a^{(i)}\right) z^{-1}+O(z^{-2}),\quad \rho\equiv 0 \mod (p+1),\\[1em]
z-\sum_{i=\rho}^{\rho+p-1}a^{(i)}+O(z^{-1}),\quad \rho\not\equiv 0 \mod (p+1),
\end{cases}
\end{equation}
hence \eqref{eq:idsumakrho} is a consequence of \eqref{eq:relnormFkrho} and \eqref{eq:relexpFkrho}. Notice that \eqref{eq:idsumakrho} is the statement $3)$ of Theorem \ref{theo:main:2}.

Assume that $\rho\equiv p \mod (p+1)$. According to \eqref{eq:relarhoF:1}, we have
\begin{align*}
a^{(\rho)} & =(z-\widetilde{F}_{0}^{(\rho)}(z)) \prod_{i=\rho-p}^{\rho-1} \widetilde{F}_{0}^{(i)}(z),\qquad z\in\mathbb{C}\setminus\Delta_{0},\\
a^{(\rho+p+1)} & =(z-\widetilde{F}_{0}^{(\rho+p+1)}(z)) \prod_{i=\rho+1}^{\rho+p} \widetilde{F}_{0}^{(i)}(z),\qquad z\in\mathbb{C}\setminus\Delta_{0}.
\end{align*}
Dividing the second identity by the first identity, and applying Theorem~\ref{theo:main:2}.1 and \eqref{eq:idprodFkrho}, we obtain \eqref{eq:idquotFkrho:1}. Similarly one proves \eqref{eq:idquotFkrho:2}, using \eqref{eq:relarhoF:2}.\end{proof}

\subsection{Proof of $2)$ in Theorem \ref{theo:main:2}}

\begin{proof}
First note that if $l_{1}, l_{2}\in[1:p]$ with $l_{1}\neq l_{2}$, then $\varphi^{(l_{1})}/\varphi^{(l_2)}:\mathcal{R}\longrightarrow\overline{\mathbb{C}}$ is conformal. Indeed, from the definition of $\varphi^{(l)}$ we deduce that $\varphi^{(l_{1})}/\varphi^{(l_{2})}$ is a meromorphic function on $\mathcal{R}$ with only one simple pole (the point $\infty^{(l_{1})}$) and only one simple zero (the point $\infty^{(l_{2})}$).

Let $m_1$, $m_2$ be indices such that $0\leq m_1<m_2\leq p-1$. In virtue of \eqref{eq:idquotFkrho:1}--\eqref{eq:idquotFkrho:2} we have
\[
\frac{a^{(\rho+m_2(p+1))}}{a^{(\rho+m_1(p+1))}}=\begin{cases}
\frac{z-\widetilde{F}_{0}^{(\rho+m_2(p+1))}(z)}{z-\widetilde{F}_{0}^{(\rho+m_1(p+1))}(z)}\quad\mbox{if}\,\,\rho\equiv p \mod (p+1),\\[1em]
\frac{1-\widetilde{F}_{0}^{(\rho+m_2(p+1))}(z)}{1-\widetilde{F}_{0}^{(\rho+m_1(p+1))}(z)}\quad\mbox{if}\,\,\rho\not\equiv p \mod (p+1).
\end{cases}
\]
Let us assume that $a^{(\rho+m_1(p+1))}=a^{(\rho+m_2(p+1))}$. Then, from the above relation we deduce that $\widetilde{F}_{0}^{(\rho+m_1(p+1))}\equiv \widetilde{F}_{0}^{(\rho+m_2(p+1))}$.

In virtue of \eqref{def:fkrho}, we have
\[
\frac{\widetilde{f}_{0}^{(\rho+1)}}{\widetilde{f}_{0}^{(\rho)}}=\frac{\widetilde{F}_{0}^{(\rho+p+1)}}{\widetilde{F}_{0}^{(\rho)}},\quad \mbox{for any}\,\,\rho\in\mathbb{Z},
\]
so a repeated application of this identity yields
\begin{equation}\label{eq:auxquotF}
\frac{\widetilde{F}_{0}^{(\rho+m_2(p+1))}}{\widetilde{F}_{0}^{(\rho+m_1(p+1))}}=\prod_{m=m_1}^{m_2-1}\frac{\widetilde{F}_{0}^{(\rho+(m+1)(p+1))}}{\widetilde{F}_{0}^{(\rho+m(p+1))}}=\prod_{m=m_1}^{m_2-1}\frac{\widetilde{f}_{0}^{(\rho+m(p+1)+1)}}{\widetilde{f}_{0}^{(\rho+m(p+1))}}.
\end{equation}
On the other hand, by \eqref{eq:expfkrho} we have
\begin{equation}\label{eq:auxf}
\widetilde{f}_{0}^{(\rho)}=\prod_{\nu=1}^{p}\widetilde{\varphi}_{\nu}^{(l(\rho))}=\frac{1}{\widetilde{\varphi}_{0}^{(l(\rho))}},\quad\mbox{for any}\,\,\rho\in\mathbb{Z}.
\end{equation}
We conclude from \eqref{eq:auxquotF} and \eqref{eq:auxf} that
\begin{equation}\label{eq:auxfinal}
\prod_{m=m_1}^{m_2-1}\frac{\widetilde{\varphi}_{0}^{(l(\rho+m(p+1)))}}{\widetilde{\varphi}_{0}^{(l(\rho+m(p+1)+1))}}\equiv 1.
\end{equation}
The reader can easily check that for any $m$,
\[
l(\rho+m(p+1)+1)=l(\rho+(m+1)(p+1)),
\]
hence \eqref{eq:auxfinal} reduces to
\[
\frac{\widetilde{\varphi}_{0}^{(l_1)}}{\widetilde{\varphi}_{0}^{(l_2)}}\equiv 1,\qquad l_1=l(\rho+m_1(p+1)),\quad l_2=l(\rho+(m_2-1)(p+1)+1).
\]
It is easily seen that the values $l_{1}$ and $l_{2}$ above are different, which contradicts the property described at the beginning of the proof.\end{proof}

\section{Formulae for $\widetilde{F}_{k}^{(\rho)}$ and   $a^{(\rho)}$}

\subsection{The limits $a^{(\rho)}$ and the normalizing constants $c_{k}^{(\rho)}$}
\begin{proposition}
Let $0\leq \rho\leq p(p+1)-1$. If $\rho \equiv k \mod p$, $0\leq k\leq p-1$, then
\begin{equation}\label{eq:relarhockr}
a^{(\rho)}=\prod_{j=1}^{p}\frac{c_{k+1}^{(\rho+j)}}{c_{0}^{(\rho+j)}\,c_{k}^{(\rho+j)}},\qquad c_{p}^{(\rho)}=1,
\end{equation}
where the constants $\{c_{k}^{(\rho)}\}_{k=0}^{p-1}$ are the positive constants that appear in the relation $F_{k}^{(\rho)}=c_{k}^{(\rho)} \widetilde{F}_{k}^{(\rho)}$ (see Proposition~\ref{prop:boundary}), and they are obtained solving the system of equations (6.30) in \cite{LopLopstar}.
\end{proposition}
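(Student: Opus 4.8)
The plan is to derive the identity in \eqref{eq:relarhockr} by combining two facts: first, the expressions \eqref{eq:relarhoF:1}--\eqref{eq:relarhoF:2} for $a^{(\rho)}$ in terms of the normalized functions $\widetilde{F}_{0}^{(i)}$, and second, the boundary value system of Proposition~\ref{prop:boundary} together with the relation $F_{k}^{(\rho)}=c_{k}^{(\rho)}\widetilde{F}_{k}^{(\rho)}$, which ties the normalizing constants $c_{k}^{(\rho)}$ to the leading coefficients at $\infty$. Fix $\rho$ with $\rho\equiv k\mod p$, so that in the notation of Theorem~\ref{theo:main:1}, $l(\rho)=k+1$ (or whatever the convention forces; I would keep careful track of the $+1$ shift). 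The key structural input is that for each of the $p+1$ consecutive indices $\rho+1,\ldots,\rho+p$ (equivalently $\rho,\ldots,\rho+p-1$, up to shifting by the $p(p+1)$-periodicity), exactly one value of $\ell$ among the relevant residues mod $p+1$ produces the ``$\ell=k$'' type equation \eqref{eq:bv:lnp:2} and exactly one produces the ``$\ell=k+1$'' type equation \eqref{eq:bv:lnp:3}; this is the same case analysis already used in the proof of Theorem~\ref{lem:descfkrho}.

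The steps, in order, are: (i) Write $a^{(\rho)}$ using \eqref{eq:relarhoF:1} or \eqref{eq:relarhoF:2} and pass to the limit $z\to\infty$, reading off $a^{(\rho)}$ from the $z^{-1}$ (resp.\ constant) term of $\prod_{i=\rho-p}^{\rho-1}\widetilde{F}_{0}^{(i)}(z)$ via \eqref{eq:LaurentFexp} and \eqref{eq:relexpFkrho}; alternatively, and more directly, use the already-established formula \eqref{eq:expfkrho} for $f_0^{(\rho)}$ together with the definition \eqref{def:fkrho} to express $\prod_{i=\rho-p}^{\rho-1}\widetilde F_0^{(i)}$ as a ratio of $f_0^{(\cdot)}$'s. (ii) Translate the problem into the $c_{k}^{(\rho)}$: since $F_{k}^{(i)}=c_{k}^{(i)}\widetilde{F}_{k}^{(i)}$, the leading coefficient of $F_{k}^{(i)}$ at $\infty$ equals $c_{k}^{(i)}$ times the leading coefficient of $\widetilde{F}_{k}^{(i)}$, which is $1$ by definition of the tilde; so the $c_k^{(i)}$ are precisely the leading coefficients of the $F_k^{(i)}$. (iii) Multiply together, over $j=1,\ldots,p$, the boundary value equations in Proposition~\ref{prop:boundary} for the index $\rho+j$ evaluated through their leading-coefficient (i.e.\ $z\to\infty$) behaviour: each equation of the form $|F_{k}^{(i)}|^{2}/(|F_{k-1}^{(i)}||F_{k+1}^{(i)}|)=1$ (or its $|\tau|$-weighted variants) becomes, after taking leading coefficients and using that the $|\tau|$ and $1/|\tau|$ factors contribute nothing to the leading asymptotics, a multiplicative relation among the $c_{k}^{(i)}$. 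Assembling these across the $p+1$ consecutive $\rho+j$ and across the relevant $k$'s, with $c_{-1}^{(\cdot)}\equiv c_{p}^{(\cdot)}\equiv 1$, should telescope to exactly the claimed product $\prod_{j=1}^{p} c_{k+1}^{(\rho+j)}/(c_{0}^{(\rho+j)}c_{k}^{(\rho+j)})$. (iv) Cross-check the outcome against what one gets from (i), confirming that the two expressions for $a^{(\rho)}$ agree; this is essentially a bookkeeping verification.

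The main obstacle I anticipate is the combinatorial bookkeeping in step (iii): keeping straight, for each $j\in[1:p]$, which of the three equation-types \eqref{eq:bv:lnp:1}--\eqref{eq:bv:lnp:3} (or \eqref{eq:bv:lp:1}--\eqref{eq:bv:lp:2} in the $\ell=p$ case) applies, at which index $k$ it involves a $|\tau|$ factor, and verifying that all the $|\tau|$'s cancel in the leading-coefficient accounting so that only the $c$'s survive. The edge cases $k=0$ (where \eqref{eq:bv:lp:1} has the special form $|F_0^{(\rho)}|^2/(|\tau||F_1^{(\rho)}|)=1$) and $k=p-1$ (where the $k=\ell+1$ equation is dropped and $F_p^{(\rho)}\equiv 1$) will need separate attention, and one must confirm the convention $c_p^{(\rho)}=1$ is consistent with $F_p^{(\rho)}\equiv 1$. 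A second, milder subtlety is the precise alignment of the shift between $l(\rho)$ and $k$: I would state at the outset the exact relation between $\rho\equiv k\mod p$ and $l(\rho)=l(\rho+j)$ for the $j$'s in question, since $l$ is constant on residue classes mod $p$ only in the combination that appears here. Once the index dictionary is fixed, the derivation should be a routine, if slightly tedious, multiplication of the Proposition~\ref{prop:boundary} relations.
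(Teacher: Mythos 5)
There is a genuine gap, and it sits at your step (iii). The boundary value equations of Proposition~\ref{prop:boundary} are relations that hold only for $\tau$ on the slits $[a_k,b_k]$, and they are already stated for the normalized functions $F_k^{(\rho)}$, so they equal $1$ identically and contain no constants at all: there is nothing to extract by ``taking leading coefficients at $z\to\infty$'' of an identity in modulus valid only on a compact interval, and no way such equations can produce multiplicative relations among the $c_k^{(\rho+j)}$. (The constants enter only through the \emph{unnormalized} system satisfied by the $\widetilde F_k^{(\rho)}$ -- system (6.30) of \cite{LopLopstar} -- which you do not invoke; and even that system by itself does not connect the $c$'s to $a^{(\rho)}$.) Likewise your step (i) only reproduces relations between $a^{(\rho)}$ and the tilde functions, in which the $c$'s never appear, so the cross-check (iv) cannot close the argument. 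The indexing you worry about is also off: the $k$ in the statement is the residue of $\rho$ modulo $p$ (it indexes which subsequence of recurrence coefficients is being tracked), whereas the case split \eqref{eq:bv:lnp:2}--\eqref{eq:bv:lnp:3} is governed by $\ell$, the residue modulo $p+1$; these two play different roles and cannot be merged as in your bookkeeping.

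The missing ingredient is the quantitative link between the recurrence coefficients and the normalizing constants, which the paper gets from orthogonality, not from the boundary value problem. Concretely, the paper uses the formula $a_n=K_{n-p,k}^2/K_{n,k}^2$ for $n\equiv k\bmod p$ (Theorem 3.5 of \cite{LopMin}), where $K_{n,k}^{-2}=\int_{\Delta_k}P_{n,k}^2\,\frac{|H_{n,k}|}{|P_{n,k-1}P_{n,k+1}|}\,d|\sigma_{n,k}|$, together with the limits (3.8) and (3.10) of \cite{LopLopstar}, which express $\lim_\lambda K^2_{\lambda p(p+1)+\rho,k}/K^2_{\lambda p(p+1)+\rho+1,k}$ through the constants $\kappa_j^{(\rho)}=c_j^{(\rho)}\big(c_{j-1}^{(\rho)}c_{j+1}^{(\rho)}\big)^{-1/2}$. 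Writing $a^{(\rho)}$ as the telescoping product of $p$ such ratios gives $a^{(\rho)}=\prod_{j=0}^{p-1}c_{k+1}^{(\rho-p+j)}\big(c_0^{(\rho-p+j)}c_k^{(\rho-p+j)}\big)^{-1}$, and the final shift of the index window from $\{\rho-p,\dots,\rho-1\}$ to $\{\rho+1,\dots,\rho+p\}$ uses the identity \eqref{eq:idprodFkrho} (a consequence of $f_k^{(\rho)}\equiv f_k^{(\rho+p)}$), not the boundary equations. None of these inputs, nor substitutes for them, appear in your plan, so the proposed route does not reach \eqref{eq:relarhockr}.
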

\begin{proof}
Let $H_{n,k}=\frac{P_{n,k-1} \psi_{n,k}}{P_{n,k}}$, and let $\sigma_{n,k}$ be the measure defined in \eqref{varymeas:sigma}. Set
\[ K_{n,k}^{-2} := \int_{\Delta_{k}}P_{n,k}^2 \frac{|H_{n,k}|}{|P_{n,k-1} P_{n,k+1}|}\,d |\sigma_{n,k}|.
\]
In \cite[Theorem 3.5]{LopMin} it was proved that for $n\equiv k \mod p$, $0\leq k\leq p-1$, $n\geq p$, the recurrence coefficient $a_{n}$ satisfies
\[
a_{n}=\frac{K_{n-p,k}^{2}}{K_{n,k}^{2}}.
\]
As in \cite{LopLopstar}, we define the constants
\begin{equation}\label{def:kappajrho}
\kappa_{j}^{(\rho)}:=\frac{c_{j}^{(\rho)}}{(c_{j-1}^{(\rho)} c_{j+1}^{(\rho)})^{1/2}},\qquad 0\leq j\leq p-1,
\end{equation}
where by definition $c_{-1}^{(\rho)}=c_{p}^{(\rho)}=1$. Fix $\rho\in[0:p(p+1)-1]$ with $\rho\equiv k\mod p$. Using formulas (3.8) and (3.10) from \cite{LopLopstar}, we obtain
\begin{align*}
a^{(\rho)} & =\lim_{\lambda\rightarrow\infty} a_{\lambda p(p+1)+\rho}=\lim_{\lambda\rightarrow\infty}\prod_{j=0}^{p-1} \frac{K_{\lambda p(p+1)+\rho-p+j,k}^2}{K_{\lambda p(p+1)+\rho-p+j+1,k}^2} \\
& = \prod_{j=0}^{p-1}\frac{1}{(\kappa_{0}^{(\rho-p+j)}\cdots \kappa_{k}^{(\rho-p+j)})^2}=\prod_{j=0}^{p-1}\frac{c_{k+1}^{(\rho-p+j)}}{c_{0}^{(\rho-p+j)} c_{k}^{(\rho-p+j)}}.
\end{align*}
(For each $k$, the values of $\kappa_{k}^{(\rho)}$ and $c_{k}^{(\rho)}$ are defined periodically with period $p(p+1)$ in the parameter $\rho$.) From \eqref{eq:idprodFkrho} we deduce that
\[
\prod_{j=0}^{p-1}\frac{c_{k+1}^{(\rho-p+j)}}{c_{0}^{(\rho-p+j)} c_{k}^{(\rho-p+j)}}=\prod_{j=1}^{p}\frac{c_{k+1}^{(\rho+j)}}{c_{0}^{(\rho+j)}\,c_{k}^{(\rho+j)}}
\]
which concludes the proof of \eqref{eq:relarhockr}.
\end{proof}

\subsection{The quotients $\widetilde{F}_{k}^{(\rho)}/\widetilde{F}_{k-1}^{(\rho)}$}

\begin{theorem}\label{lem:quotFkr}
For each $0\leq \rho\leq p(p+1)-1$ and $1\leq k\leq p$, we have for $z\in\mathbb{C}\setminus(\Delta_{k-1}\cup\Delta_{k})$,
\begin{align}
\frac{F_{k}^{(\rho)}(z)}{F_{k-1}^{(\rho)}(z)} & =\frac{\xi_{k}^{(\rho)}(z)\,c_0^{(\rho)}}{\varepsilon_k^{(\rho)}}\frac{1}{1 + a^{(\rho)}\, \omega_{l}^{-1}\,\varphi_{k}^{(l)}(z)}\label{eq:ratioFalg}\\
\frac{\widetilde{F}_{k}^{(\rho)}(z)}{\widetilde{F}_{k-1}^{(\rho)}(z)} & =\frac{\xi_{k}^{(\rho)}(z)}{\varepsilon_k^{(\rho)}}\frac{c_{0}^{(\rho)}c_{k-1}^{(\rho)}}{c_{k}^{(\rho)}}\frac{1}{1 + a^{(\rho)}\, \omega_{l}^{-1}\,\varphi_{k}^{(l)}(z)}\label{eq:ratioFalg:2}
\end{align}
where $l=l(\rho)$ is the integer satisfying the conditions $l-1\equiv \rho\mod p$ and $1\leq l\leq p$, $\omega_l$ is defined in \eqref{def:omegal},
\[
\xi_{k}^{(\rho)}(z)=\begin{cases}
z & \mbox{if}\,\,\rho\equiv (k-1) \mod (p+1),\\
1 & \mbox{otherwise},
\end{cases}
\]
the constants $c_{k}^{(\rho)}$ are those that appear in the relation $F_{k}^{(\rho)}=c_{k}^{(\rho)} \widetilde{F}_{k}^{(\rho)}$ (see Proposition~\ref{prop:boundary}), and $\varepsilon_{k}^{(\rho)}$ is the constant (taking only the values $1$ or $-1$) given in \eqref{eq:identityekr} (see the Appendix).
\end{theorem}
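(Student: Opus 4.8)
The plan is to pass from the multiplicative ``global'' information contained in Theorem \ref{lem:descfkrho}, formula \eqref{eq:expfkrho}, back to the individual quotients $F_k^{(\rho)}/F_{k-1}^{(\rho)}$ by telescoping over a full period of length $p+1$ in the super-index. First I would fix $\rho$ and $k$, write $l=l(\rho)$, and form the quotient
\[
\frac{f_{k}^{(\rho)}(z)}{f_{k-1}^{(\rho)}(z)}=\prod_{j=0}^{p}\frac{F_{k}^{(\rho+j)}(z)}{F_{k-1}^{(\rho+j)}(z)}.
\]
By \eqref{eq:expfkrho} the left-hand side equals $\mathrm{sg}(\cdot)\,\varphi_{k}^{(l)}(z)^{-1}$ up to the explicit sign in \eqref{eq:signfkrho}; here one uses that $l(\rho+j)=l(\rho)$ for all $j\in[0:p]$ since $l$ depends only on $\rho\bmod p$. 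Thus the product of the $p+1$ quotients $F_{k}^{(\rho+j)}/F_{k-1}^{(\rho+j)}$ is a known single-valued function on $\overline{\mathbb{C}}\setminus(\Delta_{k-1}\cup\Delta_k)$, namely a constant multiple of $1/\varphi_k^{(l)}$.

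Next I would identify the individual factors. The recurrence relations \eqref{threetermrecsecondkindpequena1}--\eqref{threetermrecsecondkindpequena2} applied to $\psi_{n,k}$ and $\psi_{n,k-1}$, divided through by $P_{n,k}$ or $P_{n,k-1}$ and passed to the limit via \eqref{limitP} (together with the interlacing/degree bookkeeping of $Z(n,k)$ in Section \ref{counting}), express each ratio $\widetilde F_{k}^{(\rho)}/\widetilde F_{k-1}^{(\rho)}$ — equivalently, after inserting the normalizing constants $c_k^{(\rho)}$ of Proposition \ref{prop:boundary}, each ratio $F_k^{(\rho)}/F_{k-1}^{(\rho)}$ — as $\xi_k^{(\rho)}(z)$ times a reciprocal linear expression $1/(1+a^{(\rho)}\,\omega_l^{-1}\,g_k^{(\rho)}(z))$ for some function $g_k^{(\rho)}$ holomorphic and non-vanishing in $\overline{\mathbb{C}}\setminus(\Delta_{k-1}\cup\Delta_k)$, with a known normalization at $\infty$ (from \eqref{eq:LaurentFexp} and \eqref{divisorcond}). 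The polynomial factor $\xi_k^{(\rho)}$ appears exactly when the shift by one in $n$ changes the degree count, i.e.\ when $\rho\equiv(k-1)\bmod(p+1)$, matching Definition \ref{definitionpsi} and \eqref{varymeas:sigma}. Multiplying these $p+1$ expressions and comparing with the product computed above, the divisor data force $g_k^{(\rho)}\equiv\varphi_k^{(l)}$ up to a multiplicative constant; matching the leading Laurent coefficients at $\infty$ using \eqref{def:omegalj} and the sign identities \eqref{eq:signphikl:1}--\eqref{eq:signphikl:2} fixes that constant, and the remaining scalar discrepancy is absorbed into the constant $\varepsilon_k^{(\rho)}\in\{\pm1\}$ defined in \eqref{eq:identityekr}. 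Finally, \eqref{eq:ratioFalg:2} is obtained from \eqref{eq:ratioFalg} by dividing both sides by $c_k^{(\rho)}/c_{k-1}^{(\rho)}$.

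The main obstacle I anticipate is the bookkeeping that pins down the location and multiplicity of the zero and pole of the reciprocal linear factor: one must verify that $1+a^{(\rho)}\omega_l^{-1}\varphi_k^{(l)}(z)$ has its unique zero at $z=0$ with the correct sheet interpretation (tying this to $0^{(k(\rho))}$ as in \eqref{def:etarhoconf} and statement 4) of Theorem \ref{theo:main:2}) and no zeros elsewhere in $\overline{\mathbb{C}}\setminus(\Delta_{k-1}\cup\Delta_k)$, so that $F_k^{(\rho)}/F_{k-1}^{(\rho)}$ has exactly the allowed singularity structure dictated by Proposition \ref{prop:boundary}(i)--(iii). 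This requires carefully tracking, case by case in $\ell=\ell(\rho)$, which of the three weight types in \eqref{eq:Szegoweights} occurs at level $k$ versus $k-1$, and hence whether the quotient acquires a factor $z$, $1/z$, or neither at the origin — precisely the content encoded in $\xi_k^{(\rho)}$ and in the sign $\varepsilon_k^{(\rho)}$. Once the divisor on $\mathcal{R}$ and the leading behavior at $\infty$ are matched, uniqueness of conformal maps on the genus-zero surface $\mathcal{R}$ (as used already in Theorem \ref{lem:descfkrho} via \cite[Lemma 4.2]{AptLopRocha}) closes the argument.
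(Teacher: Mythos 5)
Your overall skeleton (recurrence for the second kind functions, telescoping over a block of $p+1$ consecutive indices, identification of the product via \eqref{eq:expfkrho}, matching constants at $\infty$) is the paper's route, but as written there are two genuine gaps. First, the mechanism you give for producing the ``reciprocal linear expression'' is not grounded: \eqref{limitP} concerns only the polynomials $P_{n,k}$, and dividing the recurrences \eqref{threetermrecsecondkindpequena1}--\eqref{threetermrecsecondkindpequena2} by $P_{n,k}$ does not yield the limit of $\psi_{\lambda p(p+1)+\rho+1,k}/\psi_{\lambda p(p+1)+\rho,k}$. The indispensable external input is formula (3.11) of \cite{LopLopstar} (restated as \eqref{eq:ratiopsifunc}), which already carries the precise constants $\varepsilon_k^{(\rho)}$, $h_k^{(\rho)}$ and $(\kappa_0^{(\rho)}\cdots\kappa_{k-1}^{(\rho)})^2=c_0^{(\rho)}c_{k-1}^{(\rho)}/c_k^{(\rho)}$; without it the connection between the recurrence and $F_k^{(\rho)}/F_{k-1}^{(\rho)}$ cannot be made with the stated constants. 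Second, your identification step fails as stated: you posit an unknown $g_k^{(\rho+j)}$ in each of the $p+1$ relations and then claim that multiplying them and comparing with $f_k^{(\rho)}/f_{k-1}^{(\rho)}$ ``forces'' $g_k^{(\rho)}\equiv\varphi_k^{(l)}$. A single product identity cannot determine $p+1$ individual unknown factors (note moreover that $l(\rho+j)$ varies with $j$, so the putative factors are not even the same function). In the paper no such inverse problem arises: in the recurrence at the single index $n=\lambda p(p+1)+\rho$, the term $a_n\,\psi_{n-p,k}/\psi_{n+1,k}$ itself telescopes into the product over $j=0,\dots,p$ of the limits \eqref{eq:ratiopsifunc}, which is exactly $f_{k-1}^{(\rho)}/f_k^{(\rho)}$ up to explicit constants, and this quotient is already known from \eqref{eq:expfkrho} to be $\mathrm{sg}(\varphi_k^{(l)}(\infty))\,\varphi_k^{(l)}$; see \eqref{eq:chain:1}--\eqref{eq:casos}.

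A related, substantive omission is your treatment of the constants: saying the ``remaining scalar discrepancy is absorbed into $\varepsilon_k^{(\rho)}$'' is not a proof, because $\varepsilon_k^{(\rho)}$ is a specific quantity fixed beforehand by \eqref{eq:identityekr}, not a free sign. To land exactly on \eqref{eq:ratioFalg} one must prove the two identities $\omega_l^{-1}=\prod_{j=0}^{p}c_0^{(\rho-p+j)}$ (equation \eqref{eq:simpli1}, obtained by comparing leading coefficients in $f_0^{(\rho)}=1/\varphi_0^{(l)}$) and $\mathrm{sg}(\varphi_k^{(l)}(\infty))\prod_{j=0}^{p}\varepsilon_k^{(\rho-p+j)}=1$ (equation \eqref{eq:simpli2}, i.e.\ Lemma~\ref{lem:propepsilon}), the latter being a nontrivial combinatorial computation with $Z(n,k)$, $\Lambda(n,k)$ and $\theta(n,k)$ that your sketch does not address. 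Finally, the ``main obstacle'' you anticipate (locating the zero of $1+a^{(\rho)}\omega_l^{-1}\varphi_k^{(l)}$ at the origin) is not needed for this theorem; that issue belongs to the proof of part 4) of Theorem~\ref{theo:main:2}, which is derived afterwards from \eqref{eq:ratioFalg}.
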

\begin{proof}
Let $\rho\in[0:p(p+1)-1]$ be fixed. As indicated, $c_k^{(\rho)}$, $0\leq k\leq p-1$ are the positive constants for which $F_k^{(\rho)}= c_k^{(\rho)}\widetilde{F}_k^{(\rho)}$. We also define $c_{-1}^{(\rho)}=c_{p}^{(\rho)}=1$. Let $\kappa_{j}^{(\rho)}$ be the constant defined in \eqref{def:kappajrho}. Note that
\begin{equation}\label{eq:prodkappa}
(\kappa_0^{(\rho)}\cdots\kappa_{k-1}^{(\rho)})^2 = \frac{c_0^{(\rho)}c_{k-1}^{(\rho)}}{c_k^{(\rho)}}, \qquad k=1,\ldots,p.
\end{equation}

Combining \eqref{eq:prodkappa} and (3.11) in \cite{LopLopstar}, for $k=1,\ldots,p$ and $z\in\mathbb{C}\setminus(\Delta_{k-1}\cup\Delta_{k}\cup\{0\})$ we obtain
\begin{equation}\label{eq:ratiopsifunc}
\lim_{\lambda \to \infty} \frac{\psi_{\lambda p(p+1) + \rho +1,k}(z)}{\psi_{\lambda p(p+1) + \rho,k}(z)} = \frac{\varepsilon_k^{(\rho)}h_k^{(\rho)}(z)}{(\kappa_0^{(\rho)}\cdots\kappa_{k-1}^{(\rho)})^2}\frac{\widetilde{F}_k^{(\rho)}(z)}{\widetilde{F}_{k-1}^{(\rho)}(z)} = \frac{\varepsilon_k^{(\rho)}h_k^{(\rho)}(z)}{c_0^{(\rho)}}\frac{ {F}_k^{(\rho)}(z)}{{F}_{k-1}^{(\rho)}(z)}
\end{equation}
$(\widetilde{F}_{p}^{(\rho)}\equiv 1)$ where
\begin{equation}\label{eq:exphkrho}
h_k^{(\rho)}(z)=\begin{cases}
z & \mbox{if}\,\,\rho \equiv p \mod (p+1), \\
z^{-1} & \mbox{if}\,\,\rho \equiv (k-1) \mod (p+1), \\
1 & \mbox{otherwise}.
\end{cases}
\end{equation}

Assume that $\rho\equiv p \mod (p+1)$. Taking $n=\lambda p(p+1)+\rho$ in \eqref{threetermrecsecondkindpequena2} and using \eqref{eq:ratiopsifunc}, we get
\begin{align}\label{eq:chain:1}
\frac{c_0^{(\rho)}}{\varepsilon_k^{(\rho)}h_k^{(\rho)}(z)}\frac{z {F}_{k-1}^{(\rho)}(z)}{ {F}_k^{(\rho)}(z)} & = \lim_{\lambda \to \infty} \frac{z \psi_{\lambda p(p+1)+ \rho,k}(z)}{\psi_{\lambda p(p+1) + \rho +1,k}(z)}\notag\\
& =1 + \lim_{\lambda \to \infty}a_{\lambda p(p+1) + \rho} \frac{\psi_{\lambda p(p+1) + \rho-p,k}(z)}{\psi_{\lambda p(p+1) + \rho+1,k}(z)}\notag\\
& =1 + a^{(\rho)} \lim_{\lambda \to \infty}  \prod_{j=0}^p \frac{\psi_{\lambda p(p+1) + \rho-p+j,k}(z)}{\psi_{\lambda p(p+1) + \rho-p + j +1,k}(z)}\notag\\
& =1 + a^{(\rho)}\prod_{j=0}^p\frac{c_0^{(\rho-p+j)}}{\varepsilon_k^{(\rho-p+j)}h_k^{(\rho-p+j)}(z)}
 \frac{ {F}_{k-1}^{(\rho-p+j)}(z)}{ {F}_k^{(\rho-p+j)}(z)}\notag\\
& =1 + a^{(\rho)}\frac{ {f}_{k-1}^{(\rho)}(z)}{ {f}_k^{(\rho)}(z)}\prod_{j=0}^p\frac{c_0^{(\rho-p+j)}}{\varepsilon_k^{(\rho-p+j)}h_k^{(\rho-p+j)}(z)}.
\end{align}
In the last equality we use the identity $f_{k}^{(\rho-p)}=f_{k}^{(\rho)}$. Taking account of \eqref{eq:exphkrho} in the case $\rho\equiv p \mod (p+1)$, from \eqref{eq:chain:1} we obtain the relation
\begin{equation}\label{eq:caso1}
\frac{c_0^{(\rho)}}{\varepsilon_k^{(\rho)} }\frac{ {F}_{k-1}^{(\rho)}(z)}{ {F}_k^{(\rho)}(z)} =  1 + a^{(\rho)}\frac{ {f}_{k-1}^{(\rho)}(z)}{ {f}_k^{(\rho)}(z)}\prod_{j=0}^p\frac{c_0^{(\rho-p+j)}}{\varepsilon_k^{(\rho-p+j)}}.
\end{equation}

If $\rho\not\equiv p \mod (p+1)$, proceeding in the same fashion but using \eqref{threetermrecsecondkindpequena1} instead of \eqref{threetermrecsecondkindpequena2} we obtain the formulas
\begin{equation}\label{eq:casos}
1 + a^{(\rho)}\frac{ {f}_{k-1}^{(\rho)}(z)}{ {f}_k^{(\rho)}(z)}\prod_{j=0}^p
\frac{c_0^{(\rho-p+j)}}{\varepsilon_k^{(\rho-p+j)}} =\begin{cases}
\frac{c_0^{(\rho)}}{\varepsilon_k^{(\rho)}}\frac{z {F}_{k-1}^{(\rho)}(z)}{ {F}_k^{(\rho)}(z)}& \mbox{if}\,\,\rho\equiv (k-1) \mod (p+1), \\[1em]
\frac{c_0^{(\rho)}}{\varepsilon_k^{(\rho)} }\frac{ {F}_{k-1}^{(\rho)}(z)}{ {F}_k^{(\rho)}(z)} & \mbox{otherwise}.
\end{cases}
\end{equation}
In the second part of \eqref{eq:casos} one can include \eqref{eq:caso1}. In virtue of \eqref{eq:expfkrho}, we have $f_{k-1}^{(\rho)}/f_{k}^{(\rho)}=\mathrm{sg}(\varphi_{k}^{(l)}(\infty))\,\varphi_{k}^{(l)}$. So \eqref{eq:casos} is equivalent to
the identity
\begin{equation}\label{eq:summary}
\frac{F_{k}^{(\rho)}(z)}{F_{k-1}^{(\rho)}(z)}=\frac{\xi_{k}^{(\rho)}(z)\,c_0^{(\rho)}}{\varepsilon_k^{(\rho)}}\left(1 + a^{(\rho)}\frac{ {\varphi}_{k}^{(l)}(z)}{\mathrm{sg}({\varphi}_{k}^{(l)}(\infty))}\prod_{j=0}^p
\frac{c_0^{(\rho-p+j)} }{\varepsilon_k^{(\rho-p+j)}}\right)^{-1}
\end{equation}
which we shall simplify now.

First, we have the relation
\begin{equation}\label{eq:simpli1}
\omega_{l}^{-1}=\prod_{j=0}^{p}c_{0}^{(\rho-p+j)}
\end{equation}
where $\omega_{l}$ is defined in \eqref{def:omegal}. Indeed, in virtue of \eqref{eq:perfkrho} and \eqref{eq:expfkrho}, we have
\[
f_{0}^{(\rho-p)}=f_{0}^{(\rho)}=\mathrm{sg}\left(\prod_{\nu=1}^{p}\varphi_{\nu}^{(l)}(\infty)\right)\prod_{\nu=1}^{p}\varphi_{\nu}^{(l)}=\frac{1}{\mathrm{sg}(\varphi_{0}^{(l)}(\infty))}\frac{1}{\varphi_{0}^{(l)}}=\frac{1}{\varphi_{0}^{(l)}}
\]
and identifying the leading coefficients in the Laurent expansion at infinity of the extreme functions in this identity, we obtain \eqref{eq:simpli1}. We also have the identity
\begin{equation}\label{eq:simpli2}
\mathrm{sg}(\varphi_{k}^{(l)}(\infty)) \prod_{j=0}^{p}\varepsilon_{k}^{(\rho-p+j)}=1,
\end{equation}
which is proved in the Appendix (see Lemma~\ref{lem:propepsilon}).

Now, \eqref{eq:ratioFalg} and \eqref{eq:ratioFalg:2} follow from \eqref{eq:summary}, \eqref{eq:simpli1} and \eqref{eq:simpli2}, and the relation $F_{k}^{(\rho)}=c_{k}^{(\rho)} \widetilde{F}_{k}^{(\rho)}$.
\end{proof}

\subsection{Description of the functions $\widetilde{F}_{k}^{(\rho)}$}

\begin{theorem}\label{theo:descFkrconf}
Let $0\leq \rho\leq p(p+1)-1$ be fixed, and let $(k(\rho),l(\rho))$ be the unique pair of integers satisfying the conditions $0\leq k(\rho)\leq p$, $\rho\equiv (k(\rho)-1) \mod (p+1)$, and $1\leq l(\rho)\leq p$, $\rho\equiv (l(\rho)-1) \mod p$. For each $k=0,\ldots,p-1,$
\begin{equation}\label{eq:formFkrho}
\widetilde{F}_{k}^{(\rho)}(z)=\begin{cases}
C_{k}^{(\rho)} \prod_{j=0}^{k} (1+a^{(\rho)}\,\omega_{l(\rho)}^{-1}\,\varphi_{j}^{(l(\rho))}(z))^{-1} & \mbox{if}\,\,\,\,0\leq k<k(\rho),\\[1em]
z\,C_{k}^{(\rho)} \prod_{j=0}^{k}(1+a^{(\rho)}\,\omega_{l(\rho)}^{-1}\,\varphi_{j}^{(l(\rho))}(z))^{-1} & \mbox{if}\,\,\,\,k(\rho)\leq k\leq p-1,
\end{cases}
\end{equation}
where
\begin{equation}\label{eq:normconstant}
C_{k}^{(\rho)}=\begin{cases}
1, & k=0,\\[0.5em]
\prod_{j=1}^{k}\left(1+a^{(\rho)}\,\frac{\omega_{l(\rho),j}}{\omega_{l(\rho)}}\right), & 1\leq k\leq l(\rho)-1,\\[0.5em]
a^{(\rho)}\,\frac{\omega_{l(\rho),l(\rho)}}{\omega_{l(\rho)}}\,\prod_{j=1, j\neq l(\rho)}^{k}\left(1+a^{(\rho)}\,\frac{\omega_{l(\rho),j}}{\omega_{l(\rho)}}\right), & l(\rho)\leq k\leq p-1,
\end{cases}
\end{equation}
see \eqref{def:omegalj}. The constant $C_{k}^{(\rho)}$ has the following alternative expression:
\begin{equation}\label{eq:alt:Ckr}
C_{k}^{(\rho)}=\begin{cases}
1 & k=0,\\[0.2em]
(c_{k}^{(\rho)})^{-1} (c_{0}^{(\rho)})^{k+1} & 1\leq k\leq p-1,\,\,k\,\,\mbox{odd},\\[0.2em]
\varepsilon_{k}^{(\rho)}\,(c_{k}^{(\rho)})^{-1} (c_{0}^{(\rho)})^{k+1} & 1\leq k\leq p-1,\,\,k\,\,\mbox{even},
\end{cases}
\end{equation}
where $\varepsilon_{k}^{(\rho)}$ is defined in \eqref{eq:identityekr}.

In particular, we have
\begin{equation}\label{eq:descFzerorho}
\widetilde{F}_{0}^{(\rho)}(z)=\begin{cases}
(1+a^{(\rho)}\,\widetilde{\varphi}_{0}^{(l(\rho))}(z))^{-1} &  \mbox{if}\,\,\,\,\rho\not\equiv p \mod (p+1),\\[1em]
z\,(1+a^{(\rho)}\,\widetilde{\varphi}_{0}^{(l(\rho))}(z))^{-1} & \mbox{if}\,\,\,\,\rho\equiv p \mod (p+1).
\end{cases}
\end{equation}
\end{theorem}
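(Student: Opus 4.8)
The plan is to build up the formula for $\widetilde{F}_{k}^{(\rho)}$ from the quotient formula \eqref{eq:ratioFalg:2} established in Theorem~\ref{lem:quotFkr} by telescoping. Writing $\widetilde{F}_{k}^{(\rho)} = \widetilde{F}_{0}^{(\rho)} \prod_{j=1}^{k} \bigl(\widetilde{F}_{j}^{(\rho)}/\widetilde{F}_{j-1}^{(\rho)}\bigr)$ and substituting \eqref{eq:ratioFalg:2} for each factor, most of the normalizing constants $c_{j}^{(\rho)}$ cancel in the telescoping product, leaving an expression of the form $\widetilde{F}_{0}^{(\rho)}(z)\cdot(\text{constant})\cdot\prod_{j=1}^{k}\xi_{j}^{(\rho)}(z)\cdot\prod_{j=1}^{k}(1+a^{(\rho)}\omega_{l}^{-1}\varphi_{j}^{(l)}(z))^{-1}$. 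Here I would first record that $\widetilde{F}_{0}^{(\rho)}$ itself is given by \eqref{eq:descFzerorho}: this is the $k=0$ base case, and it follows from \eqref{eq:relarhoF:1}--\eqref{eq:relarhoF:2} together with \eqref{eq:auxf} (which gives $\widetilde{f}_{0}^{(\rho)}=1/\widetilde{\varphi}_{0}^{(l)}$), or directly from manipulating \eqref{eq:summary} in the case $k=1$ after relating $\varphi_0$ to the product $\prod_{\nu\ge 1}\varphi_\nu$ via the normalization \eqref{normconfmap}. The product $\prod_{j=0}^{k}(1+a^{(\rho)}\omega_{l}^{-1}\varphi_{j}^{(l)}(z))^{-1}$ in \eqref{eq:formFkrho} then emerges by combining the $j=0$ factor coming from $\widetilde{F}_0^{(\rho)}$ with the factors $j=1,\dots,k$ coming from the telescoping; the prefactor $z$ in the second case of \eqref{eq:formFkrho} appears precisely when one of the $\xi_{j}^{(\rho)}$ equals $z$, i.e.\ when $k\ge k(\rho)$ (since $\rho\equiv(k(\rho)-1)\bmod(p+1)$), and in the first case all $\xi_{j}^{(\rho)}\equiv 1$.

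Next I would identify the constant. Collecting the telescoped constants gives $C_{k}^{(\rho)}=(c_0^{(\rho)})^{k+1}/c_k^{(\rho)}$ up to the accumulated product $\prod_{j=1}^k(\varepsilon_j^{(\rho)})^{-1}$ of sign factors; using the identity for $\varepsilon_j^{(\rho)}$ from the Appendix (the relevant partial-product telescoping of signs, analogous to \eqref{eq:simpli2}) reduces this to the single factor $\varepsilon_k^{(\rho)}$ when $k$ is even and $1$ when $k$ is odd, which is exactly \eqref{eq:alt:Ckr}. To obtain the alternative closed form \eqref{eq:normconstant} in terms of the leading coefficients $\omega_{l,j}$ of \eqref{def:omegalj}, I would compare leading coefficients in the Laurent expansion at $\infty$ on both sides of \eqref{eq:formFkrho}: on the left, $\widetilde{F}_k^{(\rho)}$ has leading coefficient $1$ by definition of the tilde normalization; on the right, the leading coefficient of each factor $(1+a^{(\rho)}\omega_l^{-1}\varphi_j^{(l)}(z))^{-1}$ is read off from \eqref{def:omegalj} — it is $(1+a^{(\rho)}\omega_{l,j}/\omega_l)^{-1}$ when $\varphi_j^{(l)}(\infty)$ is finite and nonzero (i.e.\ $j\ne 0,l$), it is $1$ for $j=0$ (since $\varphi_0^{(l)}(\infty)=0$), and for $j=l$ the factor behaves like $a^{(\rho)}\omega_{l,l}/\omega_l\cdot z^{-1}\cdot(1+O(z^{-1}))$ contributing the factor $a^{(\rho)}\omega_{l,l}/\omega_l$ to the leading coefficient and the extra $z^{-1}$ which, combined with the $z$ prefactor, explains why $k<l$ vs.\ $k\ge l$ matters. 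Setting the product of these equal to $1/C_k^{(\rho)}$ (times $z$ when $k\ge k(\rho)$; note $\Lambda(\rho,k)$ from \eqref{eq:descLambda} bookkeeps the power of $z$) yields \eqref{eq:normconstant}, with the case split $k<l(\rho)$ vs.\ $k\ge l(\rho)$ coming from whether the $j=l$ factor is included.

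The main obstacle I anticipate is bookkeeping the signs and the powers of $z$ simultaneously and consistently. Three separate parity/residue conditions interact: $\rho\bmod(p+1)$ (which determines $k(\rho)$ and hence where the $z$-prefactor turns on, via the $\xi_j^{(\rho)}$), $\rho\bmod p$ (which determines $l(\rho)$ and hence which $\varphi_j^{(l)}$ has a pole at $\infty$), and the parity of $k$ (which governs whether $\varepsilon_k^{(\rho)}$ survives in $C_k^{(\rho)}$). Verifying that the $z$-powers predicted by $\Lambda(\rho,k)$ in \eqref{eq:descLambda} match those produced by the product formula — in particular that the single $z^{-1}$ from the $j=l$ factor together with the $\xi_j^{(\rho)}$ prefactors reproduces $z^{\Lambda(\rho,k)}$ — requires a careful case analysis over the position of $k$ relative to both $k(\rho)$ and $l(\rho)$. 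The sign identity is the other delicate point: it should follow from the Appendix lemma on $\varepsilon_k^{(\rho)}$ (the partial-sum analogue of \eqref{eq:simpli2}), but one must check the induction on $k$ carefully, and reconcile it with the sign conventions \eqref{eq:signphikl:1}--\eqref{eq:signphikl:2} for $\mathrm{sg}(\varphi_k^{(l)}(\infty))$. Once these are handled, \eqref{eq:descFzerorho} is just the $k=0$ specialization and requires no extra work.
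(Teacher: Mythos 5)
Your proposal is correct and follows essentially the same route as the paper: the base case \eqref{eq:descFzerorho} from \eqref{eq:relarhoF:1}--\eqref{eq:relarhoF:2} together with $\widetilde{f}_{0}^{(\rho)}=1/\widetilde{\varphi}_{0}^{(l)}$, telescoping \eqref{eq:ratioFalg:2}, fixing $C_{k}^{(\rho)}$ by matching the leading Laurent coefficient at $\infty$ (equivalently, evaluating the factors of $\eta^{(\rho)}$ at $\infty$, whose only pole sits at $0^{(k(\rho))}$) to get \eqref{eq:normconstant}, and collecting the $c_{j}^{(\rho)}$ and $\varepsilon_{j}^{(\rho)}$ with the Appendix identity \eqref{eq:simpli3bis} to get \eqref{eq:alt:Ckr}. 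The bookkeeping points you flag (the single $z^{-1}$ from the $j=l(\rho)$ factor versus the $\xi$-prefactors, and the sign product) are exactly the ones the paper handles, and they work out as you describe.
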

\begin{proof}
Since $\widetilde{F}_{-1}^{(\rho)}\equiv 1$, formula \eqref{eq:descFzerorho} can be viewed as a particular case of \eqref{eq:ratioFalg:2} for $k=0$. Formula \eqref{eq:descFzerorho} is quite straightforward so let us prove it first. Assume $\rho\equiv p \mod (p+1)$. Applying \eqref{eq:relarhoF:1} we obtain
\[
\widetilde{F}_{0}^{(\rho)}(z)=z-\frac{a^{(\rho)}}{\prod_{i=\rho-p}^{\rho-1}\widetilde{F}_{0}^{(i)}(z)}=z-\frac{a^{(\rho)} \widetilde{F}_{0}^{(\rho)}(z)}{\prod_{i=\rho-p}^{\rho}\widetilde{F}_{0}^{(i)}(z)}.
\]
From \eqref{def:fkrho} and \eqref{eq:perfkrho} we deduce that $\widetilde{f}_{0}^{(\rho)}(z)=\widetilde{f}_{0}^{(\rho-p)}(z)=\prod_{i=\rho-p}^{\rho}\widetilde{F}_{0}^{(i)}(z)$, and therefore
\[
\widetilde{F}_{0}^{(\rho)}(z)=\frac{z}{1+\frac{a^{(\rho)}}{\widetilde{f}_{0}^{(\rho)}(z)}}=\frac{z}{1+a^{(\rho)}\,\widetilde{\varphi}_{0}^{(l)}(z)}=\frac{z}{1+a^{(\rho)}\,\omega_{l}^{-1}\,\varphi_{0}^{(l)}(z)},\qquad l=l(\rho),
\]
where we have applied \eqref{eq:expfkrho} and \eqref{normconfmap} in the second equality (note that $\prod_{\nu=0}^{p}\widetilde{\varphi}_{\nu}^{(l)}\equiv 1$.) The other identity in \eqref{eq:descFzerorho} is obtained in the same manner, starting from the relation \eqref{eq:relarhoF:2}.

Formula \eqref{eq:formFkrho} for $k=0$ follows from \eqref{eq:descFzerorho} and the first equality in \eqref{eq:normconstant}. For the rest of the values of $k$, it suffices to observe  that taking telescopic products from \eqref{eq:ratioFalg:2} it follows that $\widetilde{F}_k^{(\rho)}$ is a constant multiple of the function
\[
\chi_{k}^{(\rho)}(z):=\prod_{j=0}^{k}(1+a^{(\rho)}\,\omega_{l(\rho)}^{-1}\,\varphi_{j}^{(l(\rho))}(z))^{-1}
\]
when $0 \leq k<k(\rho)$, or a constant multiple of  $z\chi_k^{(\rho)}(z)$ for $k(\rho) \leq k \leq p-1$, i.e.,
\begin{equation}\label{eq:relFkrchikr}
\widetilde{F}_{k}^{(\rho)}(z)=\begin{cases}
C_{k}^{(\rho)}\chi_{k}^{(\rho)}(z), & 0\leq k<k(\rho),\\[0.5em]
z\,C_{k}^{(\rho)}\chi_{k}^{(\rho)}(z), & k(\rho)\leq k\leq p-1,
\end{cases}
\end{equation}
for some constant $C_{k}^{(\rho)}$. This constant must be such that the leading coefficient of the Laurent expansion of $C_{k}^{(\rho)}\chi_{k}^{(\rho)}$ at $\infty$ is $1$.

Let us determine the constant $C_{k}^{(\rho)}$. Consider the conformal function $\eta^{(\rho)}:\mathcal{R}\longrightarrow\overline{\mathbb{C}}$ defined in \eqref{def:etarhoconf}. Recall that the unique pole of this function is located at $0^{(k(\rho))}$. Consequently,
\[
C_{k}^{(\rho)}=\begin{cases}
1, & k=0\\
\prod_{j=1}^{k}\left(1+a^{(\rho)}\,\omega_{l(\rho)}^{-1}\,\varphi_{j}^{(l(\rho))}(\infty)\right), & 1\leq k\leq l(\rho)-1,\\
a^{(\rho)}\,\omega_{l(\rho)}^{-1}\,\left(\varphi_{l(\rho)}^{(l(\rho))}\right)'(\infty) \prod_{j=1, j\neq l(\rho)}^{k}(1+a^{(\rho)} \omega_{l(\rho)}^{-1} \varphi_{j}^{(l(\rho))}(\infty)), & l(\rho)\leq k\leq p-1,
\end{cases}
\]
which is \eqref{eq:normconstant}. From \eqref{eq:ratioFalg:2} and \eqref{eq:relFkrchikr} we obtain that for $1\leq k\leq p-1$,
\[
C_{k}^{(\rho)}=\prod_{j=1}^{k}\frac{\varepsilon_{j}^{(\rho)} c_{0}^{(\rho)} c_{j-1}^{(\rho)}}{c_{j}^{(\rho)}}=\frac{(c_{0}^{(\rho)})^{k+1}}{c_{k}^{(\rho)}}\,\prod_{j=1}^{k}\varepsilon_{j}^{(\rho)}
\]
and applying \eqref{eq:simpli3bis} we get \eqref{eq:alt:Ckr}.
\end{proof}

\begin{remark}
In terms of the function $\eta^{(\rho)}$ defined in \eqref{def:etarhoconf}, formula \eqref{eq:formFkrho} admits the form
\[
\widetilde{F}_{k}^{(\rho)}(z)=\begin{cases}
\prod_{j=0}^{k}\widetilde{\eta}_{j}^{(\rho)}(z), & \mbox{if}\,\,\,\,0\leq k<k(\rho),\\[1em]
z\prod_{j=0}^{k}\widetilde{\eta}_{j}^{(\rho)}(z), & \mbox{if}\,\,\,\,k(\rho)\leq k\leq p-1,
\end{cases}
\]
for $z \in \mathbb{C} \setminus \Delta_k$.\end{remark}

\subsection{Proof of $4)$ in Theorem \ref{theo:main:2}}
\begin{proof}
Let $\rho\in[0:p(p+1)-1]$, and let $(k,l)$ be the pair of parameters indicated in the statement of the result we are proving. With these values, formula \eqref{eq:ratioFalg} establishes that
\begin{equation} \label{relation}
\frac{F_{k}^{(\rho)}(z)}{F_{k-1}^{(\rho)}(z)} =\frac{z\,c_0^{(\rho)}}{\varepsilon_k^{(\rho)}}\frac{1}{1 + a^{(\rho)}\, \omega_{l}^{-1}\,\varphi_{k}^{(l)}(z)}, \qquad z \in \mathbb{C} \setminus (\Delta_{k-1} \cup \Delta_k).
\end{equation}
Recall that $F_{-1}^{(\rho)}\equiv 1$, so if $k=0$, then \eqref{relation} is understood to be
\[
F_{0}^{(\rho)}(z)=\frac{c_{0}^{(\rho)}z}{1+a^{(\rho)}\,\omega_{l}^{-1}\,\varphi_{0}^{(l)}(z)},\qquad z\in\mathbb{C}\setminus\Delta_{0},
\]
see the second relation in \eqref{eq:descFzerorho}.

Assume first that $0 \not\in \Delta_{k-1} \cup \Delta_k$. Then \eqref{eq:descrip:arho} follows immediately because the left side of \eqref{relation} must be different from zero when $z=0$ so the denominator in the right side of \eqref{relation} must vanish at the origin.

Now assume that $0\in\Delta_{k-1} \cup \Delta_k$. In this case, by definition of the intervals $\Delta_j$, $0$ must be an extreme point of either $\Delta_{k-1}$ or $\Delta_k$. In \eqref{relation}, take the square of the absolute value and make $z$ tend to $x \in \Delta_{k-1} \cup \Delta_k$. By continuity, we obtain
\begin{equation} \label{rel2}
\left|\frac{F_{k}^{(\rho)}(x_\pm)}{F_{k-1}^{(\rho)}(x_\pm)} \right|^2 =  \frac{|x\,c_0^{(\rho)}|^2}{|1 + a^{(\rho)}\, \omega_{l}^{-1}\,\varphi_{k}^{(l)}(x_{\pm})|^2}, \qquad x \in \Delta_{k-1} \cup \Delta_k,
\end{equation}
where $x_\pm$ is either the limiting point on $\Delta_{k-1} \cup \Delta_k$ from above or below. It does not matter which limit you take so we will simply write $x$.

Assume that $0 \in \Delta_k$. Then $0 \not\in \Delta_{k-1} \cup \Delta_{k+1}$ and, therefore, $F_{k+1}^{(\rho)}(0) \neq 0,  F_{k-1}^{(\rho)}(0) \neq \infty$. When $k=0$ then $\Delta_{-1} = \emptyset$ and $F_{-1}^{(\rho)}\equiv 1$. Taking account of \eqref{eq:bv:lnp:3} or \eqref{eq:bv:lp:1} (the latter in the case when $\rho\equiv -1 \mod (p+1)$, or what is the same $\rho\equiv p \mod (p+1)$) combined with
\eqref{rel2} it follows that
\begin{equation} \label{rel3} \frac{|F_{k}^{(\rho)}(x)|^2}{|x F_{k-1}^{(\rho)}(x)|}  = |F_{k+1}^{(\rho)}(x)| =  \frac{|xF_{k-1}^{(\rho)}(x)|\,(c_0^{(\rho)})^2}{|1 + a^{(\rho)}\, \omega_{l}^{-1}\,\varphi_{k}^{(l)}(x)|^2}, \qquad x \in \Delta_k\setminus\{0\}.
\end{equation}
Now, making $x\to 0$  in \eqref{rel3} we conclude that $1 + a^{(\rho)}\, \omega_{l}^{-1}\,\varphi_{k}^{(l)}(0) = 0$, which implies \eqref{eq:descrip:arho}. If $k=0$ we have concluded.

Finally, suppose that $0 \in \Delta_{k-1}, k =1,\ldots, p$. Then $0 \not\in \Delta_{k-2} \cup \Delta_{k}$.  Using \eqref{eq:bv:lnp:2} with $k$ replaced with $k-1$ it follows that
\[\frac{|x||F_{k-1}^{(\rho)}(x)|^2}{|F_{k-2}^{(\rho)}(x)F_{k}^{(\rho)}(x)|} = 1, \qquad x \in \Delta_{k-1} \setminus \{0\}.
\]
where $F_{-1}^{(\rho)} \equiv 1$ when $k=1$. This relation combined with \eqref{rel2} gives
\[ \frac{|x||F_{k}^{(\rho)}(x)|}{|F_{k-2}^{(\rho)}(x)|} = \frac{|F_{k}^{(\rho)}(x)|^2}{|F_{k-1}^{(\rho)}(x)|^2} =  \frac{|x\,c_0^{(\rho)}|^2}{|1 + a^{(\rho)}\, \omega_{l}^{-1}\,\varphi_{k}^{(l)}(x)|^2}, \qquad x \in \Delta_{k-1} \setminus \{0\}.
\]
Cancelling out the common factor $|x|$ and letting $x\to 0$ it follows that  $1 + a^{(\rho)}\, \omega_{l}^{-1}\,\varphi_{k}^{(l)}(0) =0$ because $F_{k}^{(\rho)}(0)/F_{k-2}^{(\rho)}(0)\neq 0$. With this we conclude the proof.
\end{proof}

\subsection{Proof of Theorem \ref{theo:main:1}}
\begin{proof}
Formula \eqref{eq:ratioasympQ} is a consequence of \eqref{eq:ratioQn:1}--\eqref{eq:ratioQn:2} and \eqref{eq:descFzerorho}.

According to (3.13) in \cite{LopLopstar}, we have
\[
\lim_{\lambda\rightarrow\infty}\frac{\Psi_{\lambda p(p+1)+\rho+1,k}(z)}{\Psi_{\lambda p(p+1)+\rho,k}(z)}=\frac{\varepsilon_{k}^{(\rho)}\,g_{k}^{(\rho)}(z)}{(\kappa_{0}^{(\rho)}\cdots \kappa_{k-1}^{(\rho)})^2}\,\frac{\widetilde{F}_{k}^{(\rho)}(z^{p+1})}{\widetilde{F}_{k-1}^{(\rho)}(z^{p+1})},\qquad z\in\mathbb{C}\setminus(\Gamma_{k-1}\cup\Gamma_{k}\cup\{0\}),
\]
where
\[
g_{k}^{(\rho)}(z)=\begin{cases}
z^{-p} & \mbox{if}\,\,\rho\equiv (k-1) \mod (p+1),\\
z & \mbox{otherwise}.
\end{cases}
\]
Applying now \eqref{eq:ratioFalg:2} and \eqref{eq:prodkappa}, we have
\[
\frac{\varepsilon_{k}^{(\rho)}\,g_{k}^{(\rho)}(z)}{(\kappa_{0}^{(\rho)}\cdots \kappa_{k-1}^{(\rho)})^2}\,\frac{\widetilde{F}_{k}^{(\rho)}(z^{p+1})}{\widetilde{F}_{k-1}^{(\rho)}(z^{p+1})}=\frac{\xi_{k}^{(\rho)}(z^{p+1})\,g_{k}^{(\rho)}(z)}{1+a^{(\rho)}\,\omega_{l}^{-1}\,\varphi_{k}^{(l)}(z^{p+1})}=\frac{z}{1+a^{(\rho)}\,\omega_{l}^{-1}\,\varphi_{k}^{(l)}(z^{p+1})},
\]
so \eqref{eq:ratioasympPsink} is justified.
\end{proof}

We also obtain a result similar to Theorem~\ref{theo:main:1} for the functions $\psi_{n,k}$.

\begin{theorem}
Under the same assumptions as in Theorem~\ref{theo:main:1}, for each $0\leq \rho\leq p(p+1)-1$ and $1\leq k\leq p$ we have
\begin{equation}\label{eq:ratiolittlepsink}
\lim_{\lambda\rightarrow\infty}\frac{\psi_{\lambda p(p+1)+\rho+1,k}(z)}{\psi_{\lambda p(p+1)+\rho,k}(z)}=\begin{cases}
\frac{z}{1+a^{(\rho)}\,\omega_{l}^{-1}\,\varphi_{k}^{(l)}(z)} & \mbox{if}\,\,\rho\equiv p \mod (p+1),\\[1em]
\frac{1}{1+a^{(\rho)}\,\omega_{l}^{-1}\,\varphi_{k}^{(l)}(z)} & \mbox{otherwise},
\end{cases}
\end{equation}
uniformly on compact subsets of $\mathbb{C}\setminus(\Delta_{k-1}\cup\Delta_{k}\cup\{0\})$, where $l=l(\rho)$ is the integer satisfying the conditions $1\leq l\leq p$ and $l-1\equiv \rho\mod p$, and $\omega_{l}$ is the constant defined in \eqref{def:omegal}.
\end{theorem}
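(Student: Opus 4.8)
The plan is to transfer the already-established ratio asymptotics for the capital functions $\Psi_{n,k}$ (equation \eqref{eq:ratioasympPsink} in Theorem \ref{theo:main:1}) to the lower-case functions $\psi_{n,k}$ using the algebraic identity relating the two families. Recall from Definition~\ref{definitionpsi} (or more precisely the relation displayed just after \eqref{varymeas:sigma}) that for $0\leq k\leq p$ one has
\[
z^{k-\ell}\,\Psi_{n,k}(z)=\psi_{n,k}(z^{p+1}),\qquad n\equiv\ell\mod(p+1),\quad 0\leq\ell\leq p.
\]
So if I write $n=\lambda p(p+1)+\rho$, then both $n$ and $n+1$ must be expressed via their residues modulo $p+1$. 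Since $\rho\equiv\ell\mod(p+1)$ for a unique $\ell\in[0:p]$, the residue of $n$ is $\ell$ and the residue of $n+1$ is $\ell+1$ if $\ell<p$, or $0$ if $\ell=p$, i.e. if $\rho\equiv p\mod(p+1)$.

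First I would write out the quotient $\psi_{\lambda p(p+1)+\rho+1,k}/\psi_{\lambda p(p+1)+\rho,k}$ at the point $z$ as a quotient of the corresponding $\Psi$-functions at $z^{1/(p+1)}$-type arguments; more cleanly, replace $z$ by $w^{p+1}$ so that $\psi_{n,k}(w^{p+1})=w^{k-\ell(n)}\Psi_{n,k}(w)$. Then
\[
\frac{\psi_{n+1,k}(w^{p+1})}{\psi_{n,k}(w^{p+1})}
=w^{\ell(n)-\ell(n+1)}\,\frac{\Psi_{n+1,k}(w)}{\Psi_{n,k}(w)}.
\]
Case 1: $\rho\not\equiv p\mod(p+1)$. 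Then $\ell(n+1)=\ell(n)+1$, so the prefactor is $w^{-1}$, and by \eqref{eq:ratioasympPsink} the right-hand side tends to $w^{-1}\cdot\dfrac{w}{1+a^{(\rho)}\omega_l^{-1}\varphi_k^{(l)}(w^{p+1})}=\dfrac{1}{1+a^{(\rho)}\omega_l^{-1}\varphi_k^{(l)}(w^{p+1})}$. Writing $z=w^{p+1}$ gives exactly the second branch of \eqref{eq:ratiolittlepsink}. Case 2: $\rho\equiv p\mod(p+1)$. Then $\ell(n)=p$, $\ell(n+1)=0$, so the prefactor is $w^{p}$, and the limit is $w^{p}\cdot\dfrac{w}{1+a^{(\rho)}\omega_l^{-1}\varphi_k^{(l)}(w^{p+1})}=\dfrac{w^{p+1}}{1+a^{(\rho)}\omega_l^{-1}\varphi_k^{(l)}(w^{p+1})}=\dfrac{z}{1+a^{(\rho)}\omega_l^{-1}\varphi_k^{(l)}(z)}$, which is the first branch. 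One small bookkeeping check: the integer $l=l(\rho)$ defined here by $l-1\equiv\rho\mod p$, $1\le l\le p$, is the same $l$ appearing in Theorem~\ref{theo:main:1}, so the notation is consistent.

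Alternatively, and perhaps more transparently, the statement follows directly from the limit formula \eqref{eq:ratiopsifunc} already derived in the proof of Theorem~\ref{lem:quotFkr}, combined with \eqref{eq:ratioFalg:2} and \eqref{eq:prodkappa}: indeed \eqref{eq:ratiopsifunc} says the limit equals $\dfrac{\varepsilon_k^{(\rho)}h_k^{(\rho)}(z)}{(\kappa_0^{(\rho)}\cdots\kappa_{k-1}^{(\rho)})^2}\dfrac{\widetilde F_k^{(\rho)}(z)}{\widetilde F_{k-1}^{(\rho)}(z)}$, and substituting \eqref{eq:ratioFalg:2} and \eqref{eq:prodkappa} collapses this to $\dfrac{h_k^{(\rho)}(z)\,\xi_k^{(\rho)}(z)}{1+a^{(\rho)}\omega_l^{-1}\varphi_k^{(l)}(z)}$. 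It remains only to verify that the product $h_k^{(\rho)}(z)\,\xi_k^{(\rho)}(z)$ equals $z$ when $\rho\equiv p\mod(p+1)$ and equals $1$ otherwise. From \eqref{eq:exphkrho} and the definition of $\xi_k^{(\rho)}$ in Theorem~\ref{lem:quotFkr}: if $\rho\equiv p\mod(p+1)$ then $h_k^{(\rho)}=z$ and $\xi_k^{(\rho)}=1$ (since $\rho\equiv p\not\equiv k-1\mod(p+1)$ as $k-1\le p-1$), giving $z$; if $\rho\equiv(k-1)\mod(p+1)$ then $h_k^{(\rho)}=z^{-1}$ and $\xi_k^{(\rho)}=z$, giving $1$; in all remaining cases both equal $1$, giving $1$. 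This matches \eqref{eq:ratiolittlepsink} precisely.

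There is essentially no obstacle here; the result is a straightforward corollary. The only point requiring a modicum of care is the domain of uniform convergence: since $\psi_{n,k}$ is analytic in $\mathbb{C}\setminus\Delta_{k-1}$ (with a possible issue at the origin coming from the power of $z$ in the change of variables, or from the factor $h_k^{(\rho)}$), the convergence holds uniformly on compact subsets of $\mathbb{C}\setminus(\Delta_{k-1}\cup\Delta_k\cup\{0\})$, inherited from the corresponding statement \eqref{eq:ratioasympPsink} for $\Psi_{n,k}$ on compacts of $\mathbb{C}\setminus(\Gamma_{k-1}\cup\Gamma_k\cup\{0\})$ under the projection $z\mapsto z^{p+1}$. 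I would state this explicitly at the end. I would write the proof using the second approach, as it reuses \eqref{eq:ratiopsifunc} verbatim and avoids reintroducing the change of variables $z=w^{p+1}$.
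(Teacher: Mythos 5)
Your proposal is correct, and the second route you say you would actually write up—combining \eqref{eq:ratiopsifunc} (formula (3.11) of \cite{LopLopstar}) with \eqref{eq:ratioFalg:2} and \eqref{eq:prodkappa}, then checking that $\xi_{k}^{(\rho)}(z)\,h_{k}^{(\rho)}(z)$ equals $z$ when $\rho\equiv p \mod (p+1)$ and $1$ otherwise—is exactly the paper's proof, including the observation that $\rho\equiv p$ and $\rho\equiv k-1 \mod (p+1)$ cannot both occur for $1\leq k\leq p$. Your first variant (transferring \eqref{eq:ratioasympPsink} through $z^{k-\ell}\Psi_{n,k}(z)=\psi_{n,k}(z^{p+1})$) is also sound, but since you settle on the paper's argument there is nothing further to add.
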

\begin{proof}
Formula (3.11) in \cite{LopLopstar} asserts that
\[
\lim_{\lambda\rightarrow\infty}\frac{\psi_{\lambda p(p+1)+\rho+1,k}(z)}{\psi_{\lambda p(p+1)+\rho,k}(z)}=\frac{\varepsilon_{k}^{(\rho)}\,h_{k}^{(\rho)}(z)}{(\kappa_{0}^{(\rho)}\cdots \kappa_{k-1}^{(\rho)})^2}\,\frac{\widetilde{F}_{k}^{(\rho)}(z)}{\widetilde{F}_{k-1}^{(\rho)}(z)},
\]
uniformly on compact subsets of $\mathbb{C}\setminus(\Delta_{k}\cup\Delta_{k-1}\cup\{0\})$, where $h_{k}^{(\rho)}(z)$ is indicated in \eqref{eq:exphkrho}. Applying now \eqref{eq:ratioFalg:2} and \eqref{eq:prodkappa}, we obtain that the limiting function is $\xi_{k}^{(\rho)}(z)\,h_{k}^{(\rho)}(z)\,(1+a^{(\rho)}\,\omega_{l}^{-1}\,\varphi_{k}^{(l)}(z))^{-1}$. The expression $\xi_{k}^{(\rho)}(z)\,h_{k}^{(\rho)}(z)$ equals $z$ if $\rho\equiv p \mod (p+1)$ and it equals $1$ otherwise.
\end{proof}

\subsection{Proof of $5)$ in Theorem \ref{theo:main:2}}
We include statement $5)$ of Theorem \ref{theo:main:2} as part of the following more general result:

\begin{theorem}
Assume that $0\in\Delta_{\overline{k}}$ for some $0\leq \overline{k}\leq p-1$. Then, for any $0\leq \overline{\rho}\leq p(p+1)-1$ such that $\overline{\rho}\equiv (\overline{k}-1) \mod (p+1)$, we have
\begin{equation}\label{eq:relararmp}
a^{(\overline{\rho}-p)}=a^{(\overline{\rho})}.
\end{equation}
Moreover, for any $0\leq k\leq p-1$,
\begin{equation}\label{eq:relFkrFkrmps}
\frac{\widetilde{F}_{k}^{(\overline{\rho})}(z)}{\widetilde{F}_{k}^{(\overline{\rho}-p)}(z)}\equiv\begin{cases}
1 & \mbox{if}\,\,\,\,k\neq \overline{k},\\
z & \mbox{if}\,\,\,\,k=\overline{k}.
\end{cases}
\end{equation}
If $0\notin\Delta_{k}$ for all $0\leq k\leq p-1,$ then for any $0\leq \rho\leq p(p+1)-1$, the set of $p+1$ values $\{a^{(\rho+mp)}\}_{m=0}^{p}$ is formed by distinct quantities.
\end{theorem}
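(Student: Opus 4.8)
The plan is to extract all three assertions from the closed-form expression \eqref{eq:descrip:arho} for $a^{(\rho)}$ together with the geometry of $\mathcal{R}$, feeding the resulting identities back into the representation \eqref{eq:formFkrho}. First I would record the index arithmetic: if $\overline{\rho}\equiv \overline{k}-1\pmod{p+1}$ then $k(\overline{\rho})=\overline{k}$, and since $-p\equiv 1\pmod{p+1}$ and $-p\equiv 0\pmod{p}$ we also get $k(\overline{\rho}-p)=\overline{k}+1$ and $l(\overline{\rho}-p)=l(\overline{\rho})=:l$. The geometric heart of the matter is that the sign conditions on the $a_{j},b_{j}$ force $0$ to be an \emph{endpoint} of $\Delta_{\overline{k}}$ whenever $0\in\Delta_{\overline{k}}$ (its left endpoint if $\overline{k}$ is even, its right endpoint if $\overline{k}$ is odd); hence $z=0$ is a branch point of $\mathcal{R}$, precisely the one joining the two sheets $\mathcal{R}_{\overline{k}}$ and $\mathcal{R}_{\overline{k}+1}$ that share the slit $\Delta_{\overline{k}}$, so that $0^{(\overline{k})}=0^{(\overline{k}+1)}$ as points of $\mathcal{R}$. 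Plugging $\varphi_{\overline{k}}^{(l)}(0)=\varphi_{\overline{k}+1}^{(l)}(0)$ into \eqref{eq:descrip:arho} gives $a^{(\overline{\rho})}=-\omega_{l}/\varphi_{\overline{k}}^{(l)}(0)=-\omega_{l}/\varphi_{\overline{k}+1}^{(l)}(0)=a^{(\overline{\rho}-p)}$, which is \eqref{eq:relararmp}.

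For \eqref{eq:relFkrFkrmps} I would substitute $\rho=\overline{\rho}$ and $\rho=\overline{\rho}-p$ into \eqref{eq:formFkrho}. Since $a^{(\overline{\rho})}=a^{(\overline{\rho}-p)}$ (just proved) and $l(\overline{\rho})=l(\overline{\rho}-p)$, the products $\prod_{j=0}^{k}(1+a^{(\rho)}\omega_{l(\rho)}^{-1}\varphi_{j}^{(l(\rho))}(z))^{-1}$ are identical for the two indices, and so are the normalizing constants, because $C_{k}^{(\rho)}$ in \eqref{eq:normconstant} depends on $\rho$ only through $a^{(\rho)}$ and $l(\rho)$. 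The sole remaining discrepancy is the prefactor $z$, which is present in $\widetilde{F}_{k}^{(\rho)}$ exactly when $k\geq k(\rho)$; as $k(\overline{\rho})=\overline{k}<\overline{k}+1=k(\overline{\rho}-p)$, this prefactor occurs in the same way for both functions when $k\neq\overline{k}$, and only in $\widetilde{F}_{\overline{k}}^{(\overline{\rho})}$ when $k=\overline{k}$. Dividing then yields $\widetilde{F}_{k}^{(\overline{\rho})}/\widetilde{F}_{k}^{(\overline{\rho}-p)}\equiv 1$ for $k\neq\overline{k}$ and $\equiv z$ for $k=\overline{k}$.

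For the final claim, as $m$ runs over $0,\dots,p$ one has $l(\rho+mp)=l(\rho)=:l$ (constant), while $k(\rho+mp)\equiv \rho+1-m\pmod{p+1}$ ranges bijectively over $\{0,1,\dots,p\}$; hence by \eqref{eq:descrip:arho} the set $\{a^{(\rho+mp)}\}_{m=0}^{p}$ equals $\{-\omega_{l}/\varphi_{k}^{(l)}(0):0\leq k\leq p\}$, the rescaled image under $\varphi^{(l)}$ of the fiber $\pi^{-1}(0)=\{0^{(0)},\dots,0^{(p)}\}$ (all these values being finite and nonzero, since the divisor of $\varphi^{(l)}$ is supported at $\infty^{(0)},\infty^{(l)}$). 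When $0\notin\Delta_{k}$ for every $k$, the value $0$ is not a branch point of $\mathcal{R}$, so $\pi^{-1}(0)$ consists of exactly $p+1$ distinct points, and injectivity of the conformal map $\varphi^{(l)}$ makes the $a^{(\rho+mp)}$ pairwise distinct.

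The step calling for the most care — it is really setup rather than genuine difficulty — is the identification $0^{(\overline{k})}=0^{(\overline{k}+1)}$: one must first use the inequalities $0\leq a_{j}<b_{j}$ for even $j$ and $a_{j}<b_{j}\leq 0$ for odd $j$ to conclude that $0$ can enter $\Delta_{\overline{k}}$ only as an endpoint, and then invoke the standard description of $\mathcal{R}$ in which the endpoints of the slits are exactly the branch points where adjacent sheets are joined. Once this is in place, everything else reduces to bookkeeping with \eqref{eq:descrip:arho}, \eqref{eq:formFkrho} and \eqref{eq:normconstant}.
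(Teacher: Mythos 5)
Your proposal is correct and follows essentially the same route as the paper: identity \eqref{eq:descrip:arho} together with $\varphi_{\overline{k}}^{(l)}(0)=\varphi_{\overline{k}+1}^{(l)}(0)$ (the sheets being joined along $\Delta_{\overline{k}}$, with $0$ necessarily an endpoint) gives $a^{(\overline{\rho}-p)}=a^{(\overline{\rho})}$, then comparison of \eqref{eq:formFkrho}--\eqref{eq:normconstant} for $\rho=\overline{\rho}$ and $\rho=\overline{\rho}-p$ yields \eqref{eq:relFkrFkrmps}, and bijectivity of $\varphi^{(l)}$ on the fiber over $0$ gives the distinctness claim. Your extra care in pinning down that $0$ is a branch point identifying $0^{(\overline{k})}$ with $0^{(\overline{k}+1)}$ is a welcome but minor elaboration of the same argument.
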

\begin{proof}
For any $0\leq \rho\leq p(p+1)-1$, let $(k(\rho),l(\rho))$ be the unique pair of integers satisfying the conditions stated in Theorem~\ref{theo:main:2}.4.

Assume that $0\in\Delta_{\overline{k}}$ for some $0\leq \overline{k}\leq p-1$, and let $\overline{\rho}\in[0:p(p+1)-1]$ be such that $\overline{\rho}\equiv (\overline{k}-1) \mod (p+1)$. Since the sheets $\mathcal{R}_{\overline{k}}$ and $\mathcal{R}_{\overline{k}+1}$ of the Riemann surface are glued along the interval $\Delta_{\overline{k}}$, we have
\begin{equation}\label{eq:eqphis}
\varphi_{\overline{k}}^{(l)}(0)=\varphi_{\overline{k}+1}^{(l)}(0),\qquad \mbox{for all}\,\,1\leq l\leq p.
\end{equation}
We also have $\overline{k}=k(\overline{\rho})$, $k(\overline{\rho}-p)=k(\overline{\rho})+1=\overline{k}+1$, and $l(\overline{\rho}-p)=l(\overline{\rho})=:l$. Therefore, applying \eqref{eq:descrip:arho} and \eqref{eq:eqphis}, we obtain
\[
a^{(\overline{\rho}-p)}=-\frac{\omega_{l(\overline{\rho}-p)}}{\varphi_{k(\overline{\rho}-p)}^{(l(\overline{\rho}-p))}(0)}=-\frac{\omega_{l}}{\varphi_{\overline{k}}^{(l)}(0)}=a^{(\overline{\rho})}.
\]
This settles \eqref{eq:relararmp}.

Now we prove \eqref{eq:relFkrFkrmps}. First, observe that \eqref{eq:relararmp} and \eqref{eq:normconstant} easily imply that $C_{k}^{(\overline{\rho})}=C_{k}^{(\overline{\rho}-p)}$ for any $0\leq k\leq p-1$, because $l(\overline{\rho})=l(\overline{\rho}-p)$. Therefore, comparing the expressions of $\widetilde{F}_{k}^{(\overline{\rho})}$ and $\widetilde{F}_{k}^{(\overline{\rho}-p)}$ that \eqref{eq:formFkrho} gives, and taking into account that $k(\overline{\rho}-p)=\overline{k}+1=k(\overline{\rho})+1$, we see that $\widetilde{F}_{k}^{(\overline{\rho})}\equiv \widetilde{F}_{k}^{(\overline{\rho}-p)}$ for all $k\neq \overline{k}$, $0\leq k\leq p-1$, and $\widetilde{F}_{k}^{(\overline{\rho})}\equiv z\widetilde{F}_{k}^{(\overline{\rho}-p)}$ if $k=\overline{k}$. This settles \eqref{eq:relFkrFkrmps}.

Assume that $0\notin\Delta_{k}$ for all $0\leq k\leq p-1$, and let $\rho\in[0:p(p+1)-1]$ be fixed. Let $l$ be the corresponding integer satisfying $1\leq l\leq p$ and $\rho\equiv (l-1)\mod p$. Applying \eqref{eq:descrip:arho}, we find that
\[
\{a^{(\rho+mp)}: 0\leq m\leq p\}=\{-\omega_{l}/\varphi_{k}^{(l)}(0): 0\leq k\leq p\}
\]
Now, $\varphi^{(l)}:\mathcal{R}\longrightarrow\overline{\mathbb{C}}$ is a bijection, and the assumption on the intervals $\Delta_{k}$ ensures that the points at the origin in the different sheets $\mathcal{R}_{k}$, $0\leq k\leq p,$ represent different points on $\mathcal{R}$. Therefore the values $-\omega_{l}/\varphi_{k}^{(l)}(0)$, $0\leq k\leq p,$ are distinct.
\end{proof}

\section{Appendix}

The constants $\varepsilon_{k}^{(\rho)}$, $1\leq k\leq p$, taking the values $1$ or $-1$, arised first in our previous work \cite{LopLopstar}. They are defined in Remark 6.5 of \cite{LopLopstar}, but here we shall use the identity
\begin{equation}\label{eq:identityekr}
\varepsilon_{k}^{(\rho)}=(-1)^{Z(\rho+1,2\lceil (k-1)/2\rceil)-Z(\rho,2\lceil (k-1)/2\rceil)+\theta(\rho,k-1)}, \qquad 1\leq k\leq p,\,\,\rho\in\mathbb{Z}_{\geq 0},
\end{equation}
where $\lceil x\rceil=\min\{m\in\mathbb{Z}: m\geq x\}$, and for integers $n\geq 0$ and $0\leq k\leq p-1$,
\begin{equation}\label{def:thetank}
\theta(n,k):=\begin{cases}
1 & \mbox{if}\,\,\ell(n)\in[0:k-1],\\
0 & \mbox{if}\,\,\ell(n)\in[k+1:p-1],\\
1 & \mbox{if}\,\,\ell(n)=k,\,\,k\,\,\mbox{odd},\\
0 & \mbox{if}\,\,\ell(n)=k,\,\,k\,\,\mbox{even},\\
1 & \mbox{if}\,\,\ell(n)=p,
\end{cases}
\end{equation}
and $\ell(n)$ is the integer defined by the conditions $n\equiv \ell(n) \mod (p+1)$, $0\leq \ell(n)\leq p$. The identity \eqref{eq:identityekr} is immediately obtained from formula (6.35) and Lemma 4.3 in \cite{LopLopstar}.

\begin{lemma}\label{lem:propepsilon}
With $\rho, k, l$ as in Theorem~\ref{lem:quotFkr}, we have
\begin{equation}\label{eq:simpli2bis}
\mathrm{sg}(\varphi_{k}^{(l)}(\infty)) \prod_{j=0}^{p}\varepsilon_{k}^{(\rho-p+j)}=1.
\end{equation}
We also have
\begin{equation}\label{eq:simpli3bis}
\prod_{j=1}^{k} \varepsilon_{j}^{(\rho)}=\begin{cases}
1 & \mbox{if}\,\,k\,\,\mbox{is odd},\\
\varepsilon_{k}^{(\rho)} & \mbox{if}\,\,k\,\,\mbox{is even}.
\end{cases}
\end{equation}
\end{lemma}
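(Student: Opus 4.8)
The plan is to reduce both identities in Lemma~\ref{lem:propepsilon} to the explicit parity formula \eqref{eq:identityekr}; once this is done, each of them becomes a finite bookkeeping exercise with the quantities $Z(\cdot,\cdot)$, $\Lambda(\cdot,\cdot)$ and $\theta(\cdot,\cdot)$. Throughout I write $m=m(k):=2\lceil(k-1)/2\rceil$, so that $m=k-1$ when $k$ is odd and $m=k$ when $k$ is even, and I use that $\ell(n)$, hence also $\theta(n,\cdot)$, depends on $n$ only through $n\bmod(p+1)$.

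To prove \eqref{eq:simpli2bis}, I would apply \eqref{eq:identityekr} to each factor, obtaining
\[
\prod_{j=0}^{p}\varepsilon_{k}^{(\rho-p+j)}=(-1)^{A+B},\qquad A:=\sum_{j=0}^{p}\left(Z(\rho-p+j+1,m)-Z(\rho-p+j,m)\right),\qquad B:=\sum_{j=0}^{p}\theta(\rho-p+j,k-1).
\]
The sum $A$ telescopes to $Z(\rho+1,m)-Z(\rho-p,m)$, which by \eqref{def:Lambdank} equals $\Lambda(\rho-p,m)$; by the $p$-periodicity of $\Lambda(\cdot,m)$ this equals $\Lambda(\rho,m)$, and since $\rho\equiv l-1\pmod p$ with $0\le l-1\le p-1$, formula \eqref{eq:descLambda} gives $A\equiv 0$ if $l\le m$ and $A\equiv 1$ if $l>m$. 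For $B$, the integers $\rho-p,\ldots,\rho$ form a complete residue system modulo $p+1$, so $\ell(\rho-p+j)$ runs over $\{0,\ldots,p\}$ as $j$ runs over $[0:p]$; summing the five cases of \eqref{def:thetank} with $K:=k-1$ held fixed, there are $K$ values $\ell\in[0:K-1]$ contributing $1$, the value $\ell=K$ contributes $1$ exactly when $K$ is odd, and $\ell=p$ always contributes $1$, so $B=K+1$ if $K$ is even and $B=K+2$ if $K$ is odd; in particular $B$ is odd in both cases. Thus $\prod_{j=0}^{p}\varepsilon_{k}^{(\rho-p+j)}=(-1)^{\Lambda(\rho,m)+1}$, which is $-1$ when $l\le m$ and $+1$ when $l>m$. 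Finally I would match this with $\mathrm{sg}(\varphi_{k}^{(l)}(\infty))$ read off from \eqref{eq:signphikl:1}--\eqref{eq:signphikl:2}: translating $l\le m$ into $l<k$ (for $k$ odd) or $l\le k$ (for $k$ even), and noting that the borderline case $l=k$ can occur only when $l$ and $k$ have the same parity, one checks the four parity combinations of $(k,l)$ directly and finds that $\mathrm{sg}(\varphi_{k}^{(l)}(\infty))$ coincides with $(-1)^{\Lambda(\rho,m)+1}$ in every case, which is \eqref{eq:simpli2bis}.

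To prove \eqref{eq:simpli3bis}, I would argue by induction on $k$, noting that the statement is equivalent to the two facts (a) $\varepsilon_{1}^{(\rho)}\equiv 1$ and (b) $\varepsilon_{2i}^{(\rho)}=\varepsilon_{2i+1}^{(\rho)}$ for every $i\ge 1$. Fact (a) is immediate: $m(1)=0$, and $Z(\rho+1,0)-Z(\rho,0)=1$ precisely when $\ell(\rho)=p$ by \eqref{eq:formZn0}, while \eqref{def:thetank} with $k=0$ gives $\theta(\rho,0)=1$ under the same condition, so the exponent in \eqref{eq:identityekr} is even. Fact (b) holds because $m(2i)=m(2i+1)=2i$, so the $Z$-parts of $\varepsilon_{2i}^{(\rho)}$ and $\varepsilon_{2i+1}^{(\rho)}$ agree and $\varepsilon_{2i}^{(\rho)}\varepsilon_{2i+1}^{(\rho)}=(-1)^{\theta(\rho,2i-1)+\theta(\rho,2i)}$; a short inspection of \eqref{def:thetank} in the cases $\ell(\rho)<2i-1$, $\ell(\rho)=2i-1$, $\ell(\rho)=2i$, $\ell(\rho)>2i$, and $\ell(\rho)=p$ shows $\theta(\rho,2i-1)=\theta(\rho,2i)$ in each. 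The induction step is then routine: passing from $k-1$ to $k$ multiplies $\prod_{j=1}^{k-1}\varepsilon_{j}^{(\rho)}$ by $\varepsilon_{k}^{(\rho)}$, and combining the inductive hypothesis with (a) at the base and with (b) (applied to $2i=k-1$) whenever $k$ is odd yields the asserted value $1$ or $\varepsilon_{k}^{(\rho)}$.

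The main obstacle is the sign-matching step at the end of the argument for \eqref{eq:simpli2bis}: one has to verify that the purely combinatorial sign $(-1)^{\Lambda(\rho,m)+1}$ coincides with the geometrically defined $\mathrm{sg}(\varphi_{k}^{(l)}(\infty))$ uniformly over all parities of $k$ and $l$ and at the boundary $l=k$, which demands care in keeping the two conventions \eqref{eq:signphikl:1} and \eqref{eq:signphikl:2} straight and in converting the inequality $l\le m$ back into a relation between $l$ and $k$. Everything else — the telescoping of $A$, the parity count for $B$, the two facts (a) and (b), and the induction — is straightforward.
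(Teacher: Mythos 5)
Your proposal is correct and takes essentially the same route as the paper's proof: both reduce everything to the parity formula \eqref{eq:identityekr}, identify the telescoped $Z$-sum with $\Lambda(\rho,2\lceil(k-1)/2\rceil)$ via the $p$-periodicity of $\Lambda$, show the $\theta$-sum over a full period of length $p+1$ is odd, and then match the resulting sign $(-1)^{\Lambda+1}$ against \eqref{eq:signphikl:1}--\eqref{eq:signphikl:2} by the same case analysis. For \eqref{eq:simpli3bis}, your facts (a) and (b) are exactly the paper's reduction to $\varepsilon_{1}^{(\rho)}=1$ and $\varepsilon_{k-1}^{(\rho)}\varepsilon_{k}^{(\rho)}=1$ for odd $k$, verified in the same way.
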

\begin{proof}
Applying \eqref{eq:identityekr}, we have
\begin{equation}\label{eq:firstfirst}
\prod_{j=0}^{p}\varepsilon_{k}^{(\rho-p+j)}=(-1)^{\sum_{j=0}^{p}\left(Z(\rho-p+j+1,2\lceil (k-1)/2\rceil)-Z(\rho-p+j,2\lceil (k-1)/2\rceil)+\theta(\rho-p+j,k-1)\right)}.
\end{equation}
It follows from \eqref{eq:altformL} and the fact that $\Lambda(n,k)$ is periodic with period $p$ with respect to $n$, that
\begin{align}
\Lambda(\rho,2\lceil (k-1)/2\rceil) & =\Lambda(\rho-p,2\lceil (k-1)/2\rceil)\notag\\
& =\sum_{j=0}^{p}\left(Z(\rho-p+j+1,2\lceil (k-1)/2\rceil)-Z(\rho-p+j,2\lceil (k-1)/2\rceil)\right).\label{eq:relLambda}
\end{align}

Now we analyze the expression $(-1)^{\sum_{j=0}^{p}\theta(\rho-p+j,k-1)}$. For each fixed $k$, the function $\theta(n,k)$ is periodic with period $p+1$ with respect to $n$, i.e., $\theta(n,k)=\theta(n+p+1,k)$. Since $p+1$ is also the number of terms in the summation $\sum_{j=0}^{p}\theta(\rho-p+j,k-1)$ and the values $\rho-p+j$, $j=0,\ldots,p,$ are consecutive, we deduce that
\[
\sum_{j=0}^{p}\theta(\rho-p+j,k-1)=\sum_{n=0}^{p}\theta(n,k-1),\quad\mbox{for any}\,\,\rho.
\]
In view of \eqref{def:thetank}, we easily find that $\sum_{n=0}^{p}\theta(n,k-1)$ is always an odd integer  (it equals $k$ if $k$ is odd and it equals $k+1$ if $k$ is even). Hence,
\begin{equation}\label{eq:summinus}
(-1)^{\sum_{j=0}^{p}\theta(\rho-p+j,k-1)}=-1.
\end{equation}
We conclude from \eqref{eq:relLambda}, \eqref{eq:summinus} and \eqref{eq:firstfirst} that
\[
\prod_{j=0}^{p}\varepsilon_{k}^{(\rho-p+j)}=(-1)^{\Lambda(\rho,2\lceil (k-1)/2\rceil)+1}.
\]

To finish the proof of \eqref{eq:simpli2bis}, we show now that
\begin{equation}\label{eq:prodepsilon:1}
\mathrm{sg}(\varphi_{k}^{(l)}(\infty)) (-1)^{\Lambda(\rho,2\lceil (k-1)/2\rceil)+1}=1,
\end{equation}
where $l$ and $\rho$ are related as in Theorem~\ref{lem:quotFkr}. Assume first that $\rho\equiv s \mod p$, $0\leq s\leq 2\lceil(k-1)/2\rceil-1$. According to \eqref{eq:descLambda}, in this case $\Lambda(\rho,2\lceil (k-1)/2\rceil)=0$. By definition of $l$, we have $l-1\equiv s \mod p$, hence $l-1=s$ and $l\leq 2\lceil(k-1)/2\rceil$. This inequality and  \eqref{eq:signphikl:1}--\eqref{eq:signphikl:2} imply that $\mathrm{sg}(\varphi_{k}^{(l)}(\infty))=-1$, which proves \eqref{eq:prodepsilon:1} in this case. Assume now that $\rho\equiv s \mod p$ and $2\lceil(k-1)/2\rceil\leq s\leq p-1$. Then \eqref{eq:descLambda} gives $\Lambda(\rho,2\lceil (k-1)/2\rceil)=1$, and in this case $l\geq 2\lceil(k-1)/2\rceil+1$, so by \eqref{eq:signphikl:1}--\eqref{eq:signphikl:2} we get $\mathrm{sg}(\varphi_{k}^{(l)}(\infty))=1$. This proves \eqref{eq:prodepsilon:1}.

To prove \eqref{eq:simpli3bis}, it suffices to show that $\varepsilon_{1}^{(\rho)}=1$ and that for $3\leq k\leq p$ odd, we have $\varepsilon_{k-1}^{(\rho)}\varepsilon_{k}^{(\rho)}=1$. Indeed, applying \eqref{eq:identityekr}, \eqref{def:thetank} and \eqref{eq:formZn0}, we obtain
\[
\varepsilon_{1}^{(\rho)}=(-1)^{Z(\rho+1,0)-Z(\rho,0)+\theta(\rho,0)}=1.
\]
Let $3\leq k\leq p$ be odd. From \eqref{eq:identityekr} we obtain that $\varepsilon_{k-1}^{(\rho)}\,\varepsilon_{k}^{(\rho)}$ equals $-1$ raised to the expression
\begin{equation}\label{eq:power}
2 Z(\rho+1,2\lceil(k-2)/2\rceil)-2 Z(\rho,2\lceil(k-2)/2\rceil)+\theta(\rho,k-2)+\theta(\rho,k-1)
\end{equation}
where we used that $\lceil(k-2)/2\rceil=\lceil(k-1)/2\rceil=(k-1)/2$. The reader can easily check that because $k$ is odd, we also have $\theta(\rho,k-2)=\theta(\rho,k-1)$. Hence, \eqref{eq:power} is even.
\end{proof}

\smallskip

\noindent\textbf{Acknowledgements:} We thank the anonymous referees for their valuable comments.

\end{document}